\newtheorem{theorem}{Theorem}[section]
\newtheorem{cl}[theorem]{Claim}
\newtheorem{claim}[theorem]{Claim}
\newtheorem{lemma}[theorem]{Lemma}
\newtheorem{conj}[theorem]{Conjecture}
\newtheorem{proposition}[theorem]{Proposition}
\newtheorem{remark}{Remark}
\newcommand\nlabel{\upshape({\itshape\arabic*\,\/})}
\newcommand{\bE}{\mathbb{E}}
\newcommand{\cF}{\mathcal{F}}
\newcommand{\cL}{\mathcal{L}}
\newcommand{\cP}{\mathcal{P}}
\newcommand{\bS}{\mathbf{S}}
\newcommand{\bX}{\mathbf{X}}
\newcommand{\esp}[1]{\mathbb{E}\left[ #1 \right]}
\newcommand{\pr}[1]{\mathbb{P}\left[ #1 \right]}
\numberwithin{equation}{section}
\theoremstyle{definition}
\newtheorem{definition}[theorem]{Definition}
\title{An exact Ore-degree condition for Hamilton cycles in oriented graphs}
\author{Yulin Chang\thanks{Mathematical Institute, Ocean University of China. Email: {\tt ylchang@ouc.edu.cn}. Supported by Natural Science Foundation of China (12201352) and Natural Science Foundation of Shandong Province (ZR2022QA083).}\and
Yangyang Cheng\thanks{Mathematical Institute, University of Oxford, Oxford, UK. Email: {\tt yangyang.cheng@maths.ox.ac.uk}. Supported by the PhD studentship of ERC Advanced Grant (883810).}
\and
Tianjiao Dai \thanks{School of Mathematics, Shandong University, Jinan, 250100, China. Email: {\tt tianjiao.dai@sdu.edu.cn}. Supported by Natural Science Foundation of Shandong Province(ZR2024QA174).}
\and
Qiancheng Ouyang \thanks{Data Science Institute, Shandong University, Jinan, 250100, China. Email: {\tt oyqc@sdu.edu.cn}. Supported by Natural Science Foundation of Shandong Province(ZR2024QA107).}
\and
Guanghui Wang \thanks{School of Mathematics, Shandong University, Jinan, 250100, China. Email: {\tt ghwang@sdu.edu.cn}. Supported by the Natural Science Foundation of China (12231018).}
}
\date{}
\begin{document}
\maketitle
\begin{abstract}

An oriented graph is a digraph that contains no 2-cycles, i.e., there is at most one arc between any two vertices. We show that every oriented graph $G$ of sufficiently large order $n$ with $\deg^+(x) +\deg^{-}(y)\geq (3n-3)/4$ whenever $G$ does not have an edge from $x$ to $y$ contains a Hamilton cycle. This is best possible and solves a problem of  K\"uhn and Osthus from 2012. Our result generalizes the result of Keevash, K\"uhn, and Osthus and improves the asymptotic bound obtained by Kelly, K\"uhn, and Osthus. \\

\end{abstract}
\maketitle
\section{Introduction}
Given a fixed graph $H$, let $G$ be a graph of the same order as $H$. 
A fundamental question in graph theory is under what conditions $G$ contains a subgraph isomorphic to $H$. Such problems have received significant attention throughout the history of graph theory. For most graphs $H$, determining whether $G$ contains $H$ as a subgraph is very challenging. For instance, when $H$ is a Hamilton cycle, this decision problem has been shown to be NP-complete and is, in fact, one of the prototypes of NP-complete problems in complexity theory. Thus, it is interesting to seek sufficient conditions that guarantee a Hamilton cycle or even a more general $H$.

The famous Dirac's theorem \cite{dirac1952some} asserts that every graph $G$ of order $n\ge3$ with minimum degree $\delta(G) \geq n/2$ contains a Hamilton cycle. Ore \cite{ore1960note} generalizes Dirac's result by proving that a graph $G$ of order $n\ge3$ contains a Hamilton cycle, if for every pair of vertices $x, y\in V(G)$ with $xy \notin E(G)$, $\deg(x) + \deg(y) \geq n$. A further generalization, due to Chv\'{a}tal \cite{chvatal1972hamilton}, states that if the degree sequence of a graph $G$ is $d_1 \leq d_2 \leq \cdots \leq d_n$, and if $n \geq 3$ and $d_i \geq i+1$ or $d_{n-i} \geq n-i$ for all $i > n/2$, then $G$ contains a Hamilton cycle.
The degree sequence condition given by Chv\'{a}tal is best possible in the sense that, for any degree sequence $d_1 \leq d_2 \leq \cdots \leq d_n$ that violates Chv\'{a}tal's condition, one can always construct a graph $H$ with degree sequence $d'_1 \leq d'_2 \leq \cdots \leq d'_n$ such that $d'_i \geq d_i$ for all $i \in [n]$, which does not contain a Hamilton cycle.

For a digraph $G$ and a vertex $v \in V(G)$, let $\deg^+(v)$ and $\deg^-(v)$ denote the outdegree and indegree of $v$, respectively. Define $\delta^+(G) \coloneqq \min \{\deg^+(v) \colon v \in V(G)\}$ and $\delta^-(G) \coloneqq \min \{\deg^-(v)  \colon v \in V(G)\}$. Let $\delta^0(G) \coloneqq \min\{\delta^+(G), \delta^-(G)\}$. When referring to paths and cycles in digraphs we always mean that they are directed without mentioning this explicitly. The Hamilton cycle problem was generalized to digraphs by Ghouila-Houri \cite{ghouilahouri1960condition}:
\begin{theorem}[Ghouila-Houri \cite{ghouilahouri1960condition}]
Every strongly connected digraph $G$ on $n$ vertices with $\delta^+(G)+\delta^-(G)\geq n$ contains a Hamilton cycle. In particular, every digraph with $\delta^0(G)\geq n/2$ contains a Hamilton cycle.
\end{theorem}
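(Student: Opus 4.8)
We may assume $n\ge 3$. The second assertion reduces to the first: if $\delta^0(G)\ge n/2$ then $\delta^+(G)+\delta^-(G)\ge n$, and $G$ is strongly connected, for a partition $V(G)=X\cup Y$ into nonempty parts with no arc from $X$ to $Y$ would force every vertex of $X$ to have all of its (at least $n/2$) out-neighbours in $X$ and every vertex of $Y$ all of its in-neighbours in $Y$, whence $|X|,|Y|>n/2$, which is impossible. So it suffices to prove the first statement (and we note that a strongly connected digraph on at least two vertices contains a directed cycle). My plan is an extremal argument on a longest \emph{cycle} rather than on a longest path: in a digraph, P\'osa-type rotation of a directed path is too weak to close the path into a cycle through all of its vertices, so the undirected proof of Ore's theorem does not transfer, whereas a longest cycle, if not Hamiltonian, can be attacked by rerouting it through the outside vertices.

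So suppose $G$ is strongly connected with $\delta^+(G)+\delta^-(G)\ge n$ (hence $\deg^+(x)+\deg^-(y)\ge n$ for all $x,y\in V(G)$), let $C=x_1x_2\cdots x_kx_1$ be a longest directed cycle of $G$, and assume for contradiction that $k<n$; put $R:=V(G)\setminus V(C)\ne\varnothing$. First I would attach a long detour to $C$: by strong connectivity every vertex of $R$ lies on a directed path whose two endpoints are on $C$ and whose internal vertices all lie in $R$, and among all such paths I choose $P=u\,w_1w_2\cdots w_t\,v$ (so $u,v\in V(C)$, $w_1,\dots,w_t\in R$, $t\ge1$) with $t$ maximum, taking $u\ne v$ if possible; the remaining case, where $R$ is separated from the rest of $G$ by a single vertex, leads directly to a contradiction with the degree hypothesis. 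Writing $u=x_i$, $v=x_j$, let $Z=\bigl(x_i=z_0\to z_1\to\cdots\to z_\ell=x_j\bigr)$ be the forward arc of $C$ from $u$ to $v$. Replacing $Z$ by $P$ yields a directed cycle of length $k-\ell+t+1$, which by the maximality of $C$ is at most $k$; hence
\[
\ell\ \ge\ t+1 ,
\]
so $C$ contains a bypassed segment that is long compared with the detour.

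The crux is then a counting/exchange argument along $Z$. The maximality of $t$ confines the in-neighbours of $w_1$ and the out-neighbours of $w_t$ to $V(C)\cup\{w_1,\dots,w_t\}$; and since $C$ is longest there is no $x_a\in V(C)$ with both $x_a\to w_1$ and $w_t\to x_{a+1}$ (these arcs, with the detour, would give a cycle of length $k+t$), and no pair of indices $a<b$ on $Z$ with $b-a\le t$, $z_a\to w_1$, and $w_t\to z_b$. These restrictions bound the number of arcs between $\{w_1,\dots,w_t\}$ and $V(C)$, so the bound $\deg^-(w_1)+\deg^+(w_t)\ge n$ forces $w_1$ and $w_t$ to have many neighbours inside $R$; iterating the same reasoning over the internal vertices $z_1,\dots,z_{\ell-1}$ of the long segment $Z$ (each again with out-degree plus in-degree at least $n$, yet unable to be joined to $\{w_1,\dots,w_t\}$ near both ends of $Z$ without producing a cycle longer than $C$) and counting over $V(G)=V(C)\sqcup R$ should yield either a directed cycle longer than $C$ or the inequality $\ell\le t$, contradicting $\ell\ge t+1$. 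Hence $k=n$ and $G$ has a Hamilton cycle.

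I expect this last counting step to be the main obstacle. The hypothesis is exactly tight: for odd $n=2m+1$, the digraph with vertex classes $X,Y$ of sizes $m$ and $m+1$, all arcs present in both directions between $X$ and $Y$ and none inside a class, is strongly connected and non-Hamiltonian (a Hamilton cycle would alternate between $X$ and $Y$, impossible as $|X|\ne|Y|$), yet has $\delta^+(G)+\delta^-(G)=2m=n-1$. So the count must be run with no slack---keeping precise track of the arcs between the long bypassed segment $Z$ and the detour $\{w_1,\dots,w_t\}$, of the arcs inside $R$, and of every structural consequence of ``$C$ is a longest cycle'' (re-choosing $C$ and $P$ under secondary extremal conditions if necessary)---whereas the ``in particular'' reduction and the attachment step are routine by comparison.
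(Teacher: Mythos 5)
The paper does not actually prove this theorem: Ghouila-Houri's result is quoted with a citation to the original 1960 paper and used as a black box, so there is no in-paper proof to compare your argument against. Judged on its own terms, however, your proposal has a genuine gap, and you say so yourself: the entire substance of the argument is the counting/exchange step along $Z$, and that step is never carried out. Writing that iterating over $z_1,\dots,z_{\ell-1}$ ``should yield either a directed cycle longer than $C$ or the inequality $\ell\le t$'' and calling it ``the main obstacle'' is a plan, not a proof; for a theorem that is known to require real work and whose bound is exactly tight (your bipartite example $|X|=m$, $|Y|=m+1$ shows there is no slack at all), this is the part that cannot be waved through.

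There are also specific assertions in the set-up that do not hold as stated. You claim that the maximality of $t$ confines $N^-(w_1)$ and $N^+(w_t)$ to $V(C)\cup\{w_1,\dots,w_t\}$, but this is not a direct consequence: if $r\in R\setminus\{w_1,\dots,w_t\}$ with $r\to w_1$, strong connectivity gives a path from $C$ to $r$ with internal vertices in $R$, yet that path may pass through some $w_j$, so it does not automatically concatenate with $rw_1\cdots w_tv$ into a longer detour. Similarly, the dismissal of the $u=v$ case (``$R$ is separated from the rest of $G$ by a single vertex, leads directly to a contradiction'') is asserted without argument, and the premise is not even accurate — having all detours begin and end at the same $C$-vertex does not make any single vertex a cut-vertex for $R$. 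To make this approach rigorous you would need to (i) justify where the in-/out-neighbours of $w_1,w_t$ can actually live, (ii) handle the $u=v$ case properly, and (iii) replace ``should yield'' with an explicit count; without those the proof is not complete.
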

 For Ore-type conditions, Woodall \cite{woodall1972sufficient} extended Ore's theorem to the digraph setting:
\begin{theorem}[Woodall \cite{woodall1972sufficient}]
Every strongly connected digraph $G$ on $n$ vertices with $\deg^+(x)+\deg^-(y)\geq n$ for every pair $x\neq y$ with $xy\notin E(G)$ contains a Hamilton cycle.
\end{theorem}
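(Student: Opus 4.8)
We first note that the Ore-type hypothesis already forces $G$ to be strongly connected: if $V(G)=V_1\cup V_2$ with $V_1,V_2\ne\emptyset$ and no arc from $V_2$ to $V_1$, then any $x\in V_2$ and $y\in V_1$ satisfy $xy\notin E(G)$ while $\deg^+(x)\le|V_2|-1$ and $\deg^-(y)\le|V_1|-1$, so $\deg^+(x)+\deg^-(y)\le n-2$, a contradiction. Hence the hypothesis ``strongly connected'' is in fact implied; we nonetheless use strong connectivity explicitly below.

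Suppose for contradiction that $G$ has no Hamilton cycle, and let $C=c_0c_1\cdots c_{\ell-1}c_0$ be a longest cycle; then $\ell\le n-1$ and $R:=V(G)\setminus V(C)\ne\emptyset$. By strong connectivity there is a \emph{detour}: a path $P=c_i=p_0,p_1,\dots,p_t=c_j$ with $p_1,\dots,p_{t-1}\in R$ and $t\ge 2$. We fix $C$ of maximum length and then choose the detour $P$ to first maximise the number $t-1$ of its internal vertices and, subject to that, to minimise the length of the arc of $C$ leading from $c_j$ back to $c_i$. Since replacing that arc of $C$ by $P$ produces a cycle, maximality of $|C|$ forbids it, and more generally forbids rerouting $C$ through any portion of $R$. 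Rephrasing each forbidden rerouting as a missing arc, one extracts strong restrictions on the neighbourhoods of the internal vertices $p_1,\dots,p_{t-1}$ and of the endpoints $c_i,c_j$: these vertices interact only with a short stretch of $C$ near $\{c_i,c_j\}$ and with $P$ itself, so the relevant out-degree of $p_1$ and in-degree of $p_{t-1}$ are bounded in terms of $t$ and the length of that stretch.

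Now feed this into the degree condition. For a non-adjacent pair $(x,y)$ we have $\deg^+(x)+\deg^-(y)\ge n$; applying this with $x$ an internal vertex of $P$ (small out-degree, by the previous step) forces almost every non-out-neighbour of $x$ to have in-degree close to $n$, and symmetrically every non-in-neighbour of $p_{t-1}$ to have out-degree close to $n$. Iterating, the detour is forced to be short -- at most one or two internal vertices -- while nearly every vertex of $G$ becomes almost universal. One finishes by hand in these few cases: deleting the at most two internal vertices of $P$ leaves a digraph $G'$ on $n-1$ or $n-2$ vertices with $\delta^0(G')\ge|V(G')|/2$, so by Ghouila-Houri's theorem $G'$ has a Hamilton cycle $D$, and near-universality lets one splice the deleted vertices back into $D$ between a suitable consecutive pair (using the arcs $c_i\to p_1,\ \dots,\ p_{t-1}\to c_j$), yielding a Hamilton cycle of $G$ and the desired contradiction (alternatively one directly enlarges $C$). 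The crux is entirely the middle step: arranging the extremal choice of $C$ and $P$ so that the non-rerouting restrictions are simultaneously strong enough to bound $t$ and to pin down the global degree structure, together with the ensuing case analysis -- this is where Woodall's original proof does its real work.
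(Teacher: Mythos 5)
The paper does not prove this statement; it cites Woodall's theorem as a known result from~\cite{woodall1972sufficient}, so there is no in-paper proof to compare against. Judged on its own terms, your proposal is not a proof but an outline, and it contains genuine gaps that are not merely matters of detail.

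Your opening observation is correct and worth noting: if $V=V_1\cup V_2$ with no arc from $V_2$ to $V_1$, then any $x\in V_2$, $y\in V_1$ form a non-edge $xy\notin E(G)$ with $\deg^+(x)\le|V_2|-1$ and $\deg^-(y)\le|V_1|-1$, so the hypothesis already forces strong connectivity. That part is fine. The trouble starts with the central step. You assert that the extremal choice of $C$ and $P$ together with ``forbidden reroutings'' bounds the out-degree of $p_1$ and the in-degree of $p_{t-1}$, then that the Ore condition forces ``nearly every vertex to become almost universal.'' But the condition $\deg^+(x)+\deg^-(y)\ge n$ for non-adjacent pairs does \emph{not} imply any uniform minimum semi-degree: a vertex $x$ with $\deg^+(x)=1$ is perfectly compatible with the hypothesis provided every $y$ with $xy\notin E(G)$ has $\deg^-(y)\ge n-1$. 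What you can extract is that many vertices have large in-degree and many have large out-degree, but these can be different sets of vertices, and nothing in the sketch makes them coincide or makes them cover almost all of $V(G)$. Consequently the next step --- ``deleting the at most two internal vertices of $P$ leaves a digraph $G'$ with $\delta^0(G')\ge|V(G')|/2$'' --- is simply unjustified, and in fact false in general for digraphs satisfying only the Ore-type hypothesis. The appeal to Ghouila--Houri therefore does not go through.

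More fundamentally, you concede the point yourself: you write that the extremal choice and the ensuing case analysis are ``where Woodall's original proof does its real work.'' That is precisely the content that a proof must supply. A valid argument needs to actually carry out the non-rerouting bookkeeping --- specifying which arcs between $\{c_i,\dots,c_j\}$, $R$, and the rest of $C$ are forbidden, summing the resulting degree deficits, and showing that the sum contradicts $\deg^+(x)+\deg^-(y)\ge n$ for a suitably chosen non-adjacent pair on the detour. As written, the proposal identifies the right extremal objects (a longest cycle plus a detour through its complement) and the right lever (missing arcs versus the Ore bound), but stops short of the calculation that makes those two pieces collide. Until that middle step is made precise, this is a plan for a proof rather than a proof.
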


 However, proving a Chv\'{a}tal-type condition for digraphs is significantly more challenging. A long-standing conjecture by Nash-Williams proposes a Chv\'{a}tal-type condition for Hamilton cycles (see \cite{nashst}):
 \begin{conj}[Nash-Williams \cite{nashst}]\label{nash-williams}
Suppose that $G$ is a strongly connected digraph on $n\geq 3$ vertices such that for all $i<n/2$,
\begin{itemize}
  \item $d^+_i\geq i+1$ or $d^-_{n-i}\geq n-i$;
  \item $d^-_i\geq i+1$ or $d^+_{n-i}\geq n-i$,
\end{itemize}
then $G$ contains a Hamilton cycle.
\end{conj}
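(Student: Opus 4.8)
To establish Conjecture~\ref{nash-williams} one cannot expect the short Bondy--Chv\'atal closure argument that proves Chv\'atal's theorem to carry over, since no digraph closure of comparable strength is known; the only viable route is the \emph{stability/extremal} method, so the plan below follows that paradigm and is necessarily tentative, as the conjecture remains open. First I would dispose of the easy regime: if $\delta^+(G)+\delta^-(G)\ge n$, then Ghouila-Houri's theorem already yields a Hamilton cycle (recall $G$ is assumed strongly connected). Hence I may assume both sorted semidegree sequences begin with genuinely small terms, and the crux is a dichotomy. Either (a) after deleting a bounded set $S$ of low-semidegree vertices the digraph $G-S$ has linear minimum semidegree and is a robust outexpander; or (b) $G$ is within $o(n)$ edits (arc additions, deletions and reversals) of one of a short explicit list of extremal digraphs, each obtained by blowing up a ``source-like'' class $A$, a ``sink-like'' class $C$, and a nearly independent middle class $B$, in analogy with the extremal graphs for Chv\'atal's theorem.

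In case (a) I would apply the fact, established through the robust-expansion machinery, that a robustly expanding digraph of linear minimum semidegree contains a Hamilton cycle, to find one in $G-S$, and then reinsert the vertices of $S$ by absorption: the Chv\'atal-type inequalities force each $v\in S$ to have its few in- and out-neighbours spread so that a short path threads through $v$, and strong connectivity allows these short paths to be spliced into the long cycle. This is, in essence, the argument behind the asymptotic form of the conjecture due to K\"uhn, Osthus and Treglown (and the related semi-exact result of Christofides, Keevash, K\"uhn and Osthus), pushed to the exact threshold. In case (b) I would classify the near-extremal structures and show that each either already fails the hypothesis (hence is irrelevant) or, exploiting its rigidity, admits a Hamilton cycle via a P\'osa-type rotation--extension argument along alternating paths between $A$ and $C$ through $B$.

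The principal obstacle I foresee is the extremal analysis of case (b). Unlike the undirected setting, here the two interacting sequences $d^+$ and $d^-$ must be controlled simultaneously; the family of digraphs that are extremal for \emph{some} index $i<n/2$ is large and not obviously reducible to finitely many cases; and ruling out the configurations that \emph{almost} block a Hamilton cycle requires exactly the delicate case distinctions that have kept this conjecture open for decades. A secondary, more technical obstacle is making the reduction in case (a) lossless: known proofs that robust expanders are Hamiltonian go through the regularity method or iterated absorption, both of which natively sacrifice $\eps n$ in the semidegree, so attaining the exact bound demands that the expander step and the absorption of $S$ be coordinated with no slack to spare.
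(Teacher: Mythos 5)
The statement you are asked about is Conjecture~\ref{nash-williams}, and the paper does not prove it: it merely records the Nash-Williams conjecture as motivating background and notes that only an asymptotic version is known (Christofides, Keevash, K\"uhn and Osthus), with the full conjecture remaining open. There is therefore no proof in the paper against which to compare your submission, and you correctly flag this yourself. What you have written is a high-level research programme (a robust-expander/absorption route in the non-extremal regime, together with a stability analysis of near-extremal digraphs), not a proof; nothing in it resolves the conjecture.

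As a programme sketch it is sensible and aligned with how the asymptotic results in the area were obtained, but be aware that it is not what this paper does. The paper's actual contribution is an exact Ore-type degree-sum condition for Hamilton cycles in \emph{oriented} graphs (Theorem~\ref{main}), proved by an absorption argument in the non-extremal case and a structural case analysis in the extremal case. That is a different statement from Conjecture~\ref{nash-williams}, which concerns a Chv\'atal-type degree-\emph{sequence} condition for general strongly connected digraphs. Your two concerns --- that the family of extremal digraphs for a two-sequence Chv\'atal condition is hard to classify, and that regularity/absorption natively lose $\eps n$ in semidegree --- are indeed the recognized obstructions; but as presented they remain obstacles, not steps of a proof. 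If your intent was to address a result actually established in the paper, you should target Theorem~\ref{main} (or Theorems~\ref{thm:non-extremal}/\ref{thm:extremal}); if your intent was to assess the status of Conjecture~\ref{nash-williams}, the correct conclusion is simply that it is open and the paper offers no proof of it.
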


 In \cite{christofides2010semiexact}, Christofides, Keevash, K\"{u}hn, and Osthus proved an asymptotic version of the Nash-Williams conjecture, but the full conjecture remains open.

An oriented graph is a digraph that contains no $2$-cycles, i.e., there is at most one arc between any two vertices. Keevash, K\"{u}hn, and Osthus \cite{keevash2009exact} generalized Dirac's theorem to oriented graphs, proving that there exists an $n_0$ such that for all $n \geq n_0$, every oriented graph $G$ of order $n$ with $\delta^0(G) \geq \frac{3n-4}{8}$ contains a Hamilton cycle. Kelly, K\"{u}hn, and Osthus \cite{kelly2008dirac} established an Ore-degree condition for oriented graphs, showing that for every $\alpha > 0$, there exists an integer $n_0 = n_0(\alpha)$ such that every oriented graph $G$ of order $n \geq n_0$ satisfying $\deg^+(x) + \deg^-(y) \geq (3/4 + \alpha)n$ for each $x, y$ with $xy \notin E(G)$ contains a Hamilton cycle.
K\"{u}hn and Osthus~\cite{kuhn2012survey} presented that it would be interesting to obtain a tight version of this result. The result of our work is to give a full solution to this problem when $n$ is sufficiently large, which generalizes the result in \cite{keevash2009exact}.
\begin{theorem}\label{main}
There exists an $n_0>0$ such that for all $n\geq n_0$, every oriented graph $G$ of order $n$ satisfying $\deg^+(x) + \deg^-(y) \geq  (3n-3)/4$ whenever $xy \notin E(G)$ contains a Hamilton cycle.  
\end{theorem}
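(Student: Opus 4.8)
The strategy is the by-now-standard "absorbing + regularity" framework for embedding spanning structures in dense (di)graphs, combined with a careful stability analysis to push the degree condition down to the exact threshold. I would split the argument into an extremal case and a non-extremal (stability) case, governed by a small parameter: either $G$ is "close" to one of the known extremal configurations for the $3n/4$ bound, or it is not.

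\medskip
\noindent\textbf{Step 1: Identify the extremal examples.} First I would pin down the oriented graphs showing that $(3n-3)/4$ is best possible. These should be the natural generalisations of the Keevash--K\"uhn--Osthus extremal example for $\delta^0 \geq \frac{3n-4}{8}$: roughly, blow-ups of a short "tournament-like" pattern on a constant number of parts, where the vertex classes have carefully chosen sizes and orientations between classes are complete, arranged so that no Hamilton cycle exists but the Ore-degree sum is exactly one below the threshold. Understanding precisely which configurations are tight (and that they are essentially unique up to the usual perturbations) is what makes the stability dichotomy possible.

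\medskip
\noindent\textbf{Step 2: The non-extremal case via absorption.} If $G$ is $\eps$-far from every extremal configuration, I would show that the Ore condition $\deg^+(x)+\deg^-(y) \geq (3n-3)/4$ forces enough "robust expansion" to run an absorbing argument. Concretely: (i) build an \emph{absorbing path} $P_{\mathrm{abs}}$ that can absorb any small leftover set of vertices — here one needs that for every vertex $v$ there are $\Omega(n^2)$ (or at least $\Omega(n^{2})$-many) absorbing gadgets, which should follow from the Ore condition together with non-extremality, since vertices of small out- or in-degree are forced to have large in- resp. out-neighbourhoods that then expand; (ii) apply the Diregularity Lemma and the directed blow-up/connecting lemma (or an oriented version of the regularity--Hamiltonicity machinery from Kelly--K\"uhn--Osthus) to cover almost all remaining vertices by a long path through the reduced digraph, using that the reduced digraph inherits essentially the same Ore-degree condition and hence is Hamiltonian-connected enough; (iii) join everything into one Hamilton cycle via the absorbing path. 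The bound $3/4$ appears here because $3n/4$ is the oriented-graph threshold at which the relevant "every pair of large sets has an edge between them in the right direction" expansion kicks in.

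\medskip
\noindent\textbf{Step 3: The extremal case.} If $G$ is $\eps$-close to an extremal configuration, regularity is too lossy, so one argues by hand. I would classify $G$ according to which extremal pattern it resembles, then use the fact that $G$ satisfies the \emph{exact} (not asymptotic) bound to find the one or two "extra" arcs/vertices that the true extremal example lacks, and exploit them to route a Hamilton cycle directly — typically by partitioning $V(G)$ according to the blow-up structure, finding near-perfect paths within and between the parts using a defect-Hall / König-type argument on an auxiliary bipartite graph, and patching the short remainder using the vertices of atypical degree. This is where the "$-3$" versus "$-4$" constants get fought out.

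\medskip
\noindent\textbf{Main obstacle.} The hard part will be Step 3 together with the correct statement of Step 1: getting the extremal structure \emph{exactly} right (including all boundary cases of the part sizes and the divisibility of $n$), and then in each such case producing the Hamilton cycle using only the slack guaranteed by the exact degree bound — with no $\alpha n$ room to spare, every connection has to be made deterministically and the parity/size bookkeeping must close up precisely. A secondary difficulty is ensuring the absorbing structure in Step 2 survives with the exact threshold rather than the asymptotic one, which forces the non-extremality parameter $\eps$ and the regularity parameters to be chosen in the right order.
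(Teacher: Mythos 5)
Your high-level framework --- the extremal/non-extremal dichotomy, absorption plus Diregularity in the non-extremal case, structural analysis in the extremal case --- matches the paper's. However, there is a genuine gap in Step 2(i). You assert that the Ore-degree condition together with non-extremality gives, for every vertex $v$, $\Omega(n^2)$ standard absorbing gadgets (an edge $wz$ with $wv, vz\in E(G)$, so that $v$ can be inserted between $w$ and $z$), and that this ``should follow'' from Ore plus expansion. It does not, and this is precisely the technical obstruction the paper has to overcome. Consider a vertex $v$ in the class $D$ of the extremal configuration of Section~\ref{sec:extremal}: all of its in-neighbours lie in $C\cup B$, all of its out-neighbours in $A\cup B$, and that graph has no edges from $C\cup B$ to $A\cup B$, so $v$ has \emph{zero} such gadgets. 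Non-extremality is not a threshold below which every vertex suddenly acquires $\Omega(n^2)$ of them; what it actually buys (Lemma~\ref{lm:absorbing}) is the weaker dichotomy that every vertex is either \emph{strongly absorbable} (the gadget you describe) or \emph{weakly absorbable}, meaning it has $\Omega(n^4)$ four-tuples $(w,w',z',z)$ with $ww',wv,z'z,vz\in E(G)$ and $(w',z')$ itself strongly absorbable.

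Handling the weakly absorbable vertices requires the paper's \emph{double-step absorption}: one inserts $v$ between $w$ and $z$, which ejects the subpath from $w'$ to $z'$ out of $P_{\mathrm{abs}}$, and then one re-absorbs that ejected path using a standard gadget for the strongly absorbable pair $(w',z')$. Your proposal does not anticipate this second stage; without it (or an equivalent device), the single-layer absorbing path you describe cannot absorb the leftover set, and the non-extremal half of the argument does not close. Your ``main obstacle'' paragraph correctly flags the delicacy of the extremal bookkeeping, but it locates the novelty in the wrong half of the proof: the extremal case is a (nontrivial but recognisable) adaptation of Keevash--K\"uhn--Osthus, while the genuinely new idea is on the absorbing side. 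A secondary, smaller omission is that the extremal analysis in the paper splits according to whether the class $A$ is small or $\Omega(\sqrt{\mu})n$, because when $A$ degenerates the blow-up structure changes from $4$-cyclic to $3$-cyclic; your Step~3 sketch does not acknowledge that the extremal family is not a single blow-up pattern.
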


The proof of Theorem~\ref{main} is separated into the following two theorems, depending on whether a graph is close to an extremal construction. An oriented graph $G$ is $\eta$-extremal for some $\eta>0$ if by adding or deleting at most $O(\eta) n^2$ edges of $G$, we obtain an extremal construction as defined in Section \ref{sec:extremal} (see Definition \ref{ex-defi}).

\begin{theorem}[Non-extremal case]\label{thm:non-extremal}
    Let $0<1/n\ll \eta\ll 1$. Let $G$ be an oriented graph of order $n$ satisfying $\deg^+(x) + \deg^-(y) \geq (3n-3)/4$ whenever $xy \notin E(G)$. Then one of the following holds: 
    \begin{enumerate}[label={\rm(\roman*)}]
        \item $G$ contains a Hamilton cycle;
        \item $G$ is $\eta$-extremal. 
    \end{enumerate}
\end{theorem}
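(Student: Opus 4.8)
The plan is the following. We may assume that $G$ is not $\eta$-extremal and must exhibit a Hamilton cycle. Fix constants with $1/n\ll\nu\ll\tau\ll\eta$, so in particular $\tau\ll 1/9$. I will show that $G$ is a robust $(\nu,\tau)$-outexpander with $\delta^0(G)\ge n/9$, and then the conclusion is immediate from the theorem of Kühn and Osthus that every digraph on $n$ vertices with minimum semidegree at least $\alpha n$ that is a robust $(\nu,\tau)$-outexpander (with $1/n\ll\nu\le\tau\ll\alpha$) contains a Hamilton cycle (see, e.g.,~\cite{kuhn2012survey}), applied with $\alpha=1/9$.

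The minimum semidegree bound requires no extremality and follows by counting. If some $x$ had $\deg^+(x)<n/9$, then each of the $n-1-\deg^+(x)$ vertices $y$ with $xy\notin E(G)$ would satisfy $\deg^-(y)\ge(3n-3)/4-\deg^+(x)>(23n-27)/36$, whence $e(G)=\sum_{y}\deg^-(y)>\big(\tfrac{8}{9}n-1\big)\cdot\tfrac{23n-27}{36}>\binom{n}{2}$ for large $n$, contradicting that $G$ is an oriented graph. Hence $\delta^+(G)\ge n/9$; reversing every arc of $G$ preserves both the Ore condition and the property of being an oriented graph, so the same argument gives $\delta^-(G)\ge n/9$, and thus $\delta^0(G)\ge n/9$.

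The substance of the proof is verifying robust outexpansion. Suppose it fails: there is $S\subseteq V(G)$ with $\tau n\le|S|\le(1-\tau)n$ whose robust $\nu$-outneighbourhood $RN^+_\nu(S)$, the set of vertices with at least $\nu n$ inneighbours in $S$, has $|RN^+_\nu(S)|<|S|+\nu n$. Write $T:=RN^+_\nu(S)$; then every vertex outside $T$ has fewer than $\nu n$ inneighbours in $S$, so $e\big(S,\,V(G)\setminus T\big)<\nu n^2$. First one checks that sets $S$ with $|S|$ very close to $n$ expand robustly (there every vertex has at least $n/9-o(n)>\nu n$ inneighbours in $S$, so $RN^+_\nu(S)=V(G)$, contradicting $|S|\le(1-\tau)n$), so we may assume $|S|$ is bounded away from $n$. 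Now invoke the Ore condition: for all but at most $\nu n^2$ of the pairs $(x,y)\in S\times(V(G)\setminus T)$ we have $xy\notin E(G)$ and hence $\deg^+(x)+\deg^-(y)\ge(3n-3)/4$; since almost all of the at most $\deg^+(x)$ outneighbours of each $x\in S$ lie in $S\cup T$, a set of size less than $2|S|+\nu n$, while $\deg^-(y)$ is controlled by $|T|+\nu n$, averaging these inequalities forces $|S|$, $|T\setminus S|$ and $|V(G)\setminus(S\cup T)|$ to lie within $\eta n$ of the critical values occurring in the constructions of Definition~\ref{ex-defi}, and forces the density of $G$ between and inside these three parts to be within $\eta$ of $0$ or $1$. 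A short bootstrapping step then upgrades this rough picture to the exact partition of one of those constructions, so that deleting and adding at most $O(\eta n^2)$ arcs of $G$ yields that construction; hence $G$ is $\eta$-extremal, a contradiction. Therefore $G$ is a robust $(\nu,\tau)$-outexpander, and the Kühn–Osthus theorem finishes the proof.

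The hard part is the last paragraph, namely passing from a single non-expanding set $S$ to the precise extremal structure. Unlike a uniform minimum semidegree hypothesis, the Ore condition only controls $\deg^+(x)+\deg^-(y)$ over \emph{non-adjacent} pairs, so one must keep careful track of which pairs in $S\times(V(G)\setminus T)$ are adjacent, treat the various possible orders of magnitude of $|S|$ separately (in particular the balanced regime $|S|\approx n/2$, where the naive bounds above degenerate and one instead has to analyse robust outneighbourhoods of suitable subsets of $S$), and iterate the clean-up of the partition until it matches, arc for arc up to $O(\eta n^2)$ changes, one of the finitely many constructions of Definition~\ref{ex-defi}. Ensuring that every such regime leads to a listed construction — and that no near-extremal configuration outside the list can arise — is where essentially all of the case analysis lies.
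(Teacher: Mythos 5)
Your approach --- establish $\delta^0(G)\ge\Omega(n)$, show $G$ is a robust $(\nu,\tau)$-outexpander unless it is $\eta$-extremal, and then quote the K\"uhn--Osthus--Treglown theorem that robust outexpanders with linear minimum semidegree are Hamiltonian --- is a genuinely different route from the paper's, which instead builds an absorbing path and a reservoir, applies the Diregularity and Blow-up Lemmas to cover the rest, and threads everything together. In principle your route is attractive: it would collapse the two separate places where the paper has to derive $\eta$-extremality (once inside the Absorbing Lemma via \cref{lm:absorbing}, and once inside the Covering Lemma) into a single non-expansion analysis, and it avoids constructing the absorbing structure altogether. Your first step is fine: the counting argument giving $\delta^0(G)\ge n/9$ is correct (the paper proves the slightly sharper $n/8$ by the same method in \cref{le:delta0}).

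The problem is that the one step that carries all the weight --- deducing $\eta$-extremality from the existence of a non-robustly-expanding set $S$ --- is not proved; it is narrated. Phrases like ``averaging these inequalities forces $|S|$, $|T\setminus S|$ and $|V(G)\setminus(S\cup T)|$ to lie within $\eta n$ of the critical values'' and ``a short bootstrapping step then upgrades this rough picture to the exact partition'' assert exactly the conclusion you need, and your own closing paragraph concedes that ``ensuring that every such regime leads to a listed construction --- and that no near-extremal configuration outside the list can arise --- is where essentially all of the case analysis lies.'' That is the theorem, not a remark at the end of it. A concrete symptom of the gap: you track only a three-part decomposition $S,\ T\setminus S,\ V\setminus(S\cup T)$, but Definition~\ref{ex-defi} is a four-part structure. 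You would at minimum need to split $S$ into $S\cap T$ and $S\setminus T$ (the paper's Case~2 of \cref{lem:almostcover} does precisely this in the reduced graph, setting $A\leftrightarrow S\cap N^+(S)$, $B\leftrightarrow N^+(S)\setminus S$, $C\leftrightarrow$ everything else, $D\leftrightarrow S\setminus N^+(S)$), and then run the Ore-condition double counting over the right pairs of parts to pin down $e(A,B),e(B,C),\ldots$ as in \cref{cl:2}. The Ore condition is local to non-adjacent pairs, so one cannot simply ``average''; one has to argue part-by-part, control the adjacency pattern carefully, and treat the degenerate case $|A|$ small separately, exactly as the paper does. As submitted, your proof is a plan for an alternative proof, with the central lemma left open.
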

\begin{theorem}[Extremal case]\label{thm:extremal}
    Let $0<1/n\ll \eta\ll 1$. Let $G$ be an oriented graph of order $n$ satisfying $\deg^+(x) + \deg^-(y) \geq (3n-3)/4$ whenever $xy \notin E(G)$. Suppose $G$ is $\eta$-extremal. Then $G$ contains a Hamilton cycle.
\end{theorem}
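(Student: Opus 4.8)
The plan is a stability argument: use $\eta$-extremality to reduce $G$ to a rigid structure, then route a Hamilton cycle through it directly, keeping every count accurate to within additive constants, since the extremal construction fails to be Hamiltonian only by a bounded margin.

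\textbf{Step 1 (clean partition).} By Definition~\ref{ex-defi} there is a partition of $V(G)$ into the parts $V_1,\dots,V_r$ of (a representative of) the extremal construction, with sizes correct up to $O(\eta)n$ and cross-orientation pattern correct up to $O(\eta)n^2$ arcs. First refine it: reassign each vertex to the part whose intended in-/out-degree profile it nearly realises, and call a vertex \emph{exceptional} if none does, i.e.\ if its in- or out-degree into some $V_i$ deviates from the intended value by more than $\eta^{1/3}n$. Using the hypothesis $\deg^+(x)+\deg^-(y)\ge (3n-3)/4$ whenever $x\to y\notin E(G)$ — a vertex with too many misplaced arcs would, via an absent arc, force a partner vertex above the trivial ceiling $\deg^+(v)+\deg^-(v)\le n-1$ valid in any oriented graph — bound the number of exceptional vertices by $\eta^{1/3}n$, and show every non-exceptional vertex has the intended in-/out-degree into each $V_i$ up to $\pm\eta^{1/3}n$.

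\textbf{Step 2 (exact accounting).} This is where the precise constant $(3n-3)/4$ is spent, and it is the main obstacle. In the extremal \emph{construction} there is no Hamilton cycle because some cross-cut has too small capacity, or some near-independent part is too large to be threaded by an alternating cycle; the deficiency is $\Theta(1)$. I would re-run the counting of Step~1 on the clean partition while tracking additive constants: the condition $\deg^+(x)+\deg^-(y)\ge (3n-3)/4$ — crucially stronger than what $\ge 2\delta^0$ would give for the Keevash--K\"uhn--Osthus-type construction — pins each $|V_i|$ into an explicit $O(1)$ window and guarantees at least (the intended part size)$\,-\,O(1)$ "reverse" cross-arcs across each deficient cut. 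Thus $G$'s advantage over the non-Hamiltonian construction, by exactly the right constant, is converted into a usable surplus: just enough reverse arcs and just enough balance to support a spanning cycle, with a little room to spare.

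\textbf{Step 3 (absorb exceptionals, then build).} For each exceptional $v$, the Ore condition still yields linearly many in- and out-neighbours of $v$ (it cannot be adjacent to everything without overflowing the partners of its absent arcs), and in particular a non-exceptional in-neighbour $a$ and non-exceptional out-neighbour $b$ in parts consecutive for the routing; greedily pull the $\le\eta^{1/3}n$ exceptional vertices into vertex-disjoint short paths $a_i\to v_i\to b_i$ and delete their interior vertices from the part bookkeeping, perturbing each $|V_i|$ by only $O(\eta^{1/3}n)$, which (after re-balancing the parts against one another using the surplus from Step~2) the $O(1)$ windows still accommodate. What remains is, within each dense almost-balanced bipartite-like pair of parts, to find a Hamilton path between prescribed endpoints — the standard fact for almost-complete balanced bipartite oriented graphs of linear semidegree, via a Hall/K\"onig argument or a short rotation — then to string these paths together in the cyclic order dictated by the reduced structure, splicing in the $a_i\to v_i\to b_i$ at the designated junctions, closing a Hamilton cycle of $G$. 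The residual difficulties are clerical but unforgiving: the part sizes and reverse-arc counts must stay correct to within additive constants throughout; the prescribed-endpoint demands of the several bipartite Hamilton paths must be made \emph{simultaneously} feasible (a global parity/flow check on the reduced structure, solvable exactly because of the Step-2 surplus); and each of the few sub-configurations of Definition~\ref{ex-defi} needs its own, though parallel, routing scheme.
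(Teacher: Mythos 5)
The central gap is in Step~2, and it propagates into Step~3. You claim the Ore condition ``pins each $|V_i|$ into an explicit $O(1)$ window and guarantees at least (the intended part size)$-O(1)$ reverse cross-arcs.'' Neither half is true: $\eta$-extremality and the Ore bound only constrain part sizes to within $O(\eta)n$, and the Ore bound alone produces no reverse arcs a priori — the extremal construction of Proposition~\ref{prop:ex} with $|B|>|D|$ has exactly zero arcs in $E(B,A)\cup E(B)\cup E(C,A)\cup E(C,B)$, and raising the degree threshold by $1$ can be accommodated without introducing any such arc. So the ``usable surplus'' you need is not present in the form you assume, and Step~3's re-balancing (``perturbing each $|V_i|$ by only $O(\eta^{1/3}n)$, which \dots the $O(1)$ windows still accommodate'') is internally inconsistent anyway: a perturbation of order $\eta^{1/3}n$ cannot sit inside a window of size $O(1)$.

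What is actually missing is a mechanism for the key obstruction, namely that $|B|>|D|$ forbids a cycle that visits $B$ and $D$ cyclically. The paper handles this by (i) classifying vertices as cyclic/acceptable/good and reassigning the few bad ones, (ii) contracting short \emph{BD-balanced} paths so that the imbalance $|B|-|D|$ is preserved under cleanup, (iii) taking a maximum matching $M$ of ``reverse'' arcs $E(B,A)\cup E(B)\cup E(C,A)\cup E(C,B)$ and showing that $e(M)\ge|B|-|D|$ would already let one thread $|B|-|D|$ extra visits to $B$ and close a Hamilton cycle, so a non-Hamiltonian $G$ forces $e(M)<|B|-|D|$, (iv) a delicate Ore-count (\cref{cl:7}/\cref{cl:7'}) that pushes this to $e(M)=0$ and $|B|-|D|=1$ (resp.\ $A=\emptyset$), and (v) a final contradiction by applying Ore twice to a single triple $x\in C$, $y\in B$, $z\in A$ with $xy,yz\notin E(G)$, getting $\deg^+(x)+\deg(y)+\deg^-(z)\ge\tfrac{3n-3}{2}$ against the structural upper bound $\tfrac{3(n-1)}{2}-1$. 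Your proposal replaces all of this with the assertion that a ``global parity/flow check'' is ``solvable exactly because of the Step-2 surplus'' — but since the surplus doesn't exist, the check has nothing to run on. Without the matching/contraction machinery, or some equivalent device that converts the strict Ore inequality into an explicit route across the $B$-over-$D$ imbalance, the argument does not close.
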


\subsection{Organization of the paper}

The rest of the paper is organized as follows.
In Section~\ref{s0}, we discuss the strategy of our proof.
In Section~\ref{s2}, we introduce the necessary notation, present fundamental propositions, and describe the extremal examples demonstrating that Theorem~\ref{main} is essentially best possible.
In Section~\ref{s3} we introduce the main
tools and lemmas for the proof of Theorem~\ref{main}. 
In Section~\ref{proof-non ex} we prove~\Cref{thm:non-extremal}. The key tools of the absorption method will be proved in Section~\ref{sec:abs}. In Section~\ref{s5} we show our covering lemma. Finally, we give the proof of Theorem~\ref{thm:extremal} in Section \ref{section-extremal}.

\subsection{Proof Strategy}\label{s0}
Our proof uses the absorption technique first introduced by R\"{o}dl, Ruci\'{n}ski, and Szemer\'{e}di~\cite{RRS2006}, as well as stability analysis for the extremal case. 
We first prove that for every non-$\eta$-extremal oriented graph $G$ of order $n$ such that $\deg^+(x) + \deg^-(y) \geq (3n-3)/4$ for every pair of vertices $(x, y)$ with $xy \notin E(G)$ contains a Hamilton cycle, where $\eta$ is very small and $n$ is sufficiently large compared to $1/\eta$. In this step, we adapt the idea of the absorption technique by building a small absorption structure $\mathcal{A}$ that can ``absorb'' vertices. Roughly speaking, $\mathcal{A}$ consists of disjoint sets of $1$ or $2$ edges, designed for different purposes, such that we can insert a small set of vertices into $\mathcal{A}$ to transform it into a collection of disjoint directed paths of slightly larger size, while preserving the number of directed paths. The total size of $\mathcal{A}$ is small compared to $n$, and when we insert the vertices, we do not change the endpoints of the directed paths in $\mathcal{A}$.

Next, we prove our connecting lemma that enables us to join the disjoint paths in $\mathcal{A}$ into a single directed path $P_{abs}$. The path $P_{abs}$ remains small compared to $n$, and for every vertex set $U$ that is relatively small compared to $|V(P_{abs})|$, we can absorb $U$ into $P_{abs}$, thereby obtaining a larger directed path $P'$ with $V(P') = V(P_{abs}) \cup U$.

In addition to the absorbing path $P_{abs}$, we also construct a \emph{reservoir set} $R$, which consists of a collection of disjoint short directed paths and has size much smaller than $|V(P_{abs})|$. The set $R$ has the property that for every pair of vertices $(u,v)$, there exist many disjoint directed paths $P_1, \ldots, P_t$ of bounded length within $R$ such that $uP_iv$ is a directed path for each $1 \leq i \leq t$. This allows us to use disjoint short paths from $R$ to connect a collection of disjoint paths into a longer directed path or a directed cycle.

Once the absorbing path $P_{abs}$ and the reservoir set $R$ have been constructed, the remaining task is to cover the vertices in $V(G) \setminus (V(P_{abs}) \cup V(R))$ by disjoint directed paths such that their total number is not too large. In this step, let $G' = G - (V(P_{abs}) \cup V(R))$. We apply the degree form of the Diregularity Lemma to $G'$ and obtain a reduced oriented graph $G^{\text{red}}$. Note that $G^{\text{red}}$ has a slightly weaker degree condition than $G$, but still remains close.
We then show that $G^{\text{red}}$ contains a $1$-factor; otherwise, $G$ would be $\eta$-extremal. Let $F$ be the $1$-factor we find. The final step is to use the Blow-up Lemma to find disjoint directed paths corresponding to $F$, such that the total number of paths is relatively small. We then use the reservoir set $R$ to connect these directed paths and $P_{abs}$ into a large directed cycle $C$ that is almost Hamiltonian. Note that $R$ may still contain some remaining vertices, and there is a relatively small vertex subset $V_0$ due to the Diregularity Lemma. The final step is to absorb these vertices into $P_{abs}$, thereby transforming $C$ into a directed Hamilton cycle.

The covering part of our proof is standard. Here, we introduce some novel techniques used in constructing the absorbing path $P_{abs}$. Specifically, we develop a \emph{double-step absorption} method.
Formally, the absorbing structure $\mathcal{A}$ contains two kinds of families of directed paths, $\mathcal{A}_1$ and $\mathcal{A}_2$, constructed using probabilistic methods. Here, $\mathcal{A}_1$ is a set of disjoint edges designed to absorb vertices (or pairs) that are \emph{strongly absorbable}, while $\mathcal{A}_2$ is used for those that are \emph{weakly absorbable}.
We will define these two absorbers in Section~\ref{subsec:5.3}. 
If a vertex $v$ (or a pair $(u,v)$, where $u,v$ are the endpoints of a directed path $P$ with $V(P) \cap V(P_{abs}) = \emptyset$) is strongly absorbable, then the path $P_{abs}$ can absorb it directly by replacing some edge $wz$ in $\mathcal{A}_1$ with $wvz$ (or with $wPz$), resulting in a new path $P'$ such that $V(P') = V(P_{abs}) \cup \{v\}$ (or $V(P') = V(P_{abs}) \cup V(P)$). This process uses only edges from $\mathcal{A}_1$, and each absorption consumes exactly one edge from $\mathcal{A}_1$.

However, our Ore-degree condition is not sufficiently strong to guarantee that all vertices or pairs are strongly absorbable. But we can show that if some vertex is not strongly absorbable, then it must be weakly absorbable.
Using this fact, if a vertex $v$ is weakly absorbable, we can absorb it into $P_{abs}$ by replacing a directed path $ww'Pz'z$ from $\mathcal{A}_2$ with $wvz$, obtaining a new path $P_1$ such that $
V(P_1) = V(P_{abs}) \cup \{v\} \setminus (\{w', z'\}\cup \{V(P)\})$, where the pair $(w', z')$ is strongly absorbable by definition. Therefore, we can absorb the path $w'Pz'$ into $P_1$ again by replacing one edge $uv$ from $\mathcal{A}_1$ by $uw'Pz'v$, resulting in a directed path $P_2$ with
$V(P_2) = V(P_1) \cup \{w', z'\}\cup V(P).$

The absorbing process described here can be repeated many times, allowing us to absorb a linear number of vertices, provided that their total number is relatively small compared to the size of $P_{abs}$. According to the strategy outlined earlier, this eventually yields a directed Hamilton cycle once we obtain a directed cycle containing $P_{abs}$ with order close to $n$.

To finish the proof, it remains to consider the case when $G$ is $\eta$-extremal. This means that $G$ is close to an extremal construction. The proof in this case requires detailed structural analysis, see Section~\ref{section-extremal}.

\section{Preliminaries }\label{s2}

\subsection{Notation}

Given two vertices $x$ and $y$ of a digraph $G$, we write $xy$ for the edge directed from $x$ to $y$. We write $N_G^+(x)$ for the outneighborhood of a vertex $x$ and $N_G^-(x)$ for the inneighborhood of a vertex $x$. We write $N_G(x) \coloneqq N_G^+(x) \cup N_G^-(x)$ for the neighborhood of a vertex $x$. A digraph $G$ satisfies \emph{Ore-degree condition} if for every pair of vertices $x$ and $y$ of $V(G)$ with $xy\notin E(G)$, $\deg^+(x)+\deg^-(y)\ge(3n-3)/4$.

Given a set $X\subseteq V(G)$, we write $N^+_G(X)$ for the set of all outneighbors
of vertices in $X$. So $N^-_G(X)$ is defined similarly.
The directed subgraph of $G$ induced by $X$ is denoted by $G[X]$. We write $E(X)$ for the set of edges of $G[X]$ and put $e(X) \coloneqq |E(X)|$. 
Given two disjoint subsets $X$ and $Y$ of $V(G)$, an $X$--$Y$ edge is an edge $xy$, where $x\in X$ and $y\in Y$. We write $E(X,Y)$ for the set of all edges $xy$ with $x\in X$ and $y\in Y$, and put $e(X,Y) \coloneqq |E(X,Y)|$. We denote $G[X,Y]$ to be the subgraph of $G$ with vertex set $X\cup Y$ and edge set $E(X,Y)$. We omit the subscripts $G$ when there is
no danger of confusion.

Given two vertices $x$ and $y$, an \emph{$(x,y)$-path} is a directed path which joins $x$ to $y$.
A \emph{$1$-factor} of a digraph $G$ is a collection of disjoint
cycles which cover all the vertices of $G$.
The underlying graph of a digraph $G$ is the graph obtained from $G$ by ignoring
the directions of its edges.
Throughout the paper, we omit floor and ceiling functions unless it is necessary.
We write $\alpha\ll \beta\ll \gamma$ to mean that we can choose positive constants $\alpha,\beta,\gamma$ from right to left.
More precisely, there are two monotone functions $f$ and $g$ such that, given $\gamma$, whenever $\beta\le f(\gamma)$ and $\alpha\le g(\beta)$, the subsequent statements hold.
Hierarchies with more constants are defined analogously.
Implicitly, we assume that all constants appearing in a hierarchy are positive.

\subsection{Extremal examples}\label{sec:extremal}
In this section, we show that the bound in Theorem~\ref{main} is best possible for every $n\in\mathbb{N}$.

\begin{proposition}\label{prop:ex}
For any $n\ge 3$ there are infinitely many oriented graphs $G$ on $n$ vertices 
satisfying $\deg^+(x) + \deg^-(y)=\lceil (3n-3)/4\rceil-1$ for some pair of vertices $(x,y)$ with $x,y\in V(G)$ and $xy \notin E(G)$ do not contain a Hamilton cycle.
\end{proposition}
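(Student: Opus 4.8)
The plan is to exhibit explicit families of oriented graphs achieving the claimed degree bound while failing to be Hamiltonian, treating separately the residue of $n$ modulo $4$ since the value $\lceil (3n-3)/4 \rceil - 1$ depends on it. The natural starting point is to recall the extremal construction behind the $\delta^0(G) \geq (3n-4)/8$ bound of Keevash, K\"uhn, and Osthus, which is built from blow-ups of a short oriented cycle (typically $C_3$ or $C_4$) with tournament-like or carefully oriented bipartite pieces between the parts; one adapts this so that the relevant Ore-type sum $\deg^+(x) + \deg^-(y)$ is minimized for one bad pair $(x,y)$ with $xy \notin E(G)$, rather than forcing a uniform minimum semidegree. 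Concretely, I would take a vertex partition $V(G) = V_1 \cup V_2 \cup V_3 \cup V_4$ (or three parts) of nearly equal size, orient all edges from $V_i$ to $V_{i+1}$ cyclically, and inside each $V_i$ place a suitable tournament; then a Hamilton cycle would have to cross the ``cyclic'' structure in a balanced way, and an imbalance in the part sizes (making one part slightly too large) destroys this, exactly as in the classical extremal examples.

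The key steps, in order, are: (1) fix the residue class of $n \bmod 4$ and define the partition sizes explicitly, choosing them so that the part-size imbalance obstructs any Hamilton cycle — this is the standard ``one part is too big to be covered by the available entering/leaving edges'' argument; (2) define the orientation within and between parts precisely, and identify the bad pair $(x,y)$ with $xy \notin E(G)$ realizing equality $\deg^+(x) + \deg^-(y) = \lceil(3n-3)/4\rceil - 1$; (3) verify the non-Hamiltonicity by a counting/parity argument: show that any spanning closed walk visiting the parts in cyclic order must use exactly $|V_i|$ edges out of each $V_i$, and derive a contradiction from the size imbalance (or from a bottleneck set $S$ with $|N^+(S)|$ or $|N^-(S)|$ too small, invalidating the obvious necessary condition for Hamiltonicity); (4) check ``infinitely many'' by noting the construction works for all sufficiently large $n$ in the given residue class, and handle small $n \geq 3$ separately or by a direct tweak; and (5) confirm that no other pair forces a smaller degree sum, i.e.\ that the construction genuinely has this value as its Ore-deficiency.

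The main obstacle I expect is step (3) combined with getting the constants in step (2) exactly right: the bound $(3n-3)/4$ is tight to an additive constant, so the construction has essentially no slack, and one must choose the part sizes and the internal tournaments so that the degree sum is \emph{exactly} one below the threshold for the witnessing non-edge $xy$ while every other non-edge pair has degree sum at least the threshold. This is a delicate balancing act — making any part larger to kill Hamiltonicity tends to push some degree sum below $(3n-3)/4$ for many pairs, so one needs the imbalance to be as small as possible (a single vertex, or a bounded number of vertices, moved between parts) and the orientation chosen so that the degree deficit is concentrated on one controlled pair. I would resolve this by first doing the divisible case $n \equiv 0 \pmod 4$ cleanly with a blow-up of $C_4$ (parts of sizes roughly $n/4$ with one adjusted by a small constant), verifying the arithmetic there, and then perturbing for the other three residues by adding or removing a bounded number of vertices and re-checking the two conditions (equality for the bad pair, threshold met elsewhere, no Hamilton cycle).
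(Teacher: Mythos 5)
There is a genuine gap in your concrete construction. You propose a blow-up of $C_4$ (or $C_3$) with ``inside each $V_i$ place a suitable tournament'' and edges only in the cyclic direction $V_i\to V_{i+1}$, and claim that a small imbalance in part sizes destroys Hamiltonicity. This is false: any such graph \emph{is} Hamiltonian regardless of the part sizes. Take a Hamilton path of the tournament on $V_1$, follow it with a Hamilton path of the tournament on $V_2$ (the transition edge exists since all $V_1$--$V_2$ edges go forward), then $V_3$, then $V_4$, then close up back into $V_1$. The internal tournaments let the cycle linger in a part for as long as it likes, so no balance between parts is needed and neither a bottleneck-set argument nor a ``one edge out of each $V_i$'' count produces a contradiction.

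The paper's construction is structurally different in the two ways that matter. It uses four parts $A,B,C,D$ with cyclic edges $A\to B\to C\to D\to A$ and tournaments inside $A$ and $C$ only; the parts $B$ and $D$ are \emph{independent sets}, joined to each other by a \emph{bipartite tournament}, and $|B|>|D|$. The independence of $B$ makes every visit of a cycle to $B$ a single vertex, and one checks that every path from $B$ back to $B$ must pass through $D$ (from $B$ one can only go to $C$ or $D$; from $C$ only to $C$ or $D$; so one reaches $D$, and only $D$ has edges back to $A$ or $B$). Hence any cycle visits $D$ at least as often as $B$, which is impossible with $|B|>|D|$. The bipartite $B$--$D$ tournament is equally essential on the degree side: with only cyclic edges, a vertex $b\in B$ would have $\deg^+(b)=|C|\approx n/4$ and a non-edge pair inside $B$ would have degree sum $\approx n/2$, far below the threshold; the $B$--$D$ edges raise the out-degrees in $B$ and in-degrees in $D$ to the required level. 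So the family you should be constructing has $|A|+|C|\approx n/2$ (with $|A|$ a free parameter, not necessarily close to $|C|$), $|B|\approx|D|\approx n/4$ with $|B|=|D|+1$, tournaments only inside $A$ and $C$, and a bipartite tournament between $B$ and $D$ tuned so that some non-edge $(x,y)$ attains $\deg^+(x)+\deg^-(y)=\lceil(3n-3)/4\rceil-1$ exactly. Your fallback ``bottleneck set'' idea also does not apply directly: one can check there is no set $S$ with $|N^+(S)|<|S|$ in this construction unless $A=\emptyset$; the obstruction is the visit-counting argument above, not Hall's condition.
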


 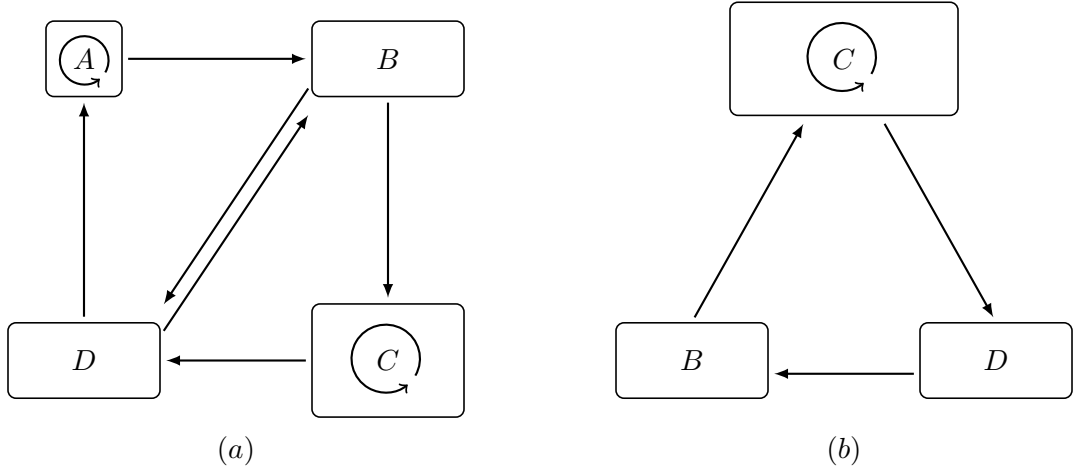
\begin{figure}
     \centering
\begin{tikzpicture}[
    box/.style={
        rectangle, 
        rounded corners=3pt, 
        draw=black, 
        line width=0.6pt,
        minimum width=2cm,
        minimum height=1cm,
        align=center
    },
    arrowfwd/.style={
        ->,
        >=latex,
        line width=0.8pt,
        shorten >=2pt,
        shorten <=2pt
    },
    arrowbwd/.style={
        <-,
        >=latex,
        line width=0.8pt,
        shorten >=2pt,
        shorten <=2pt
    }
]

\node[box,minimum width=1cm] (A) at (0,2) {$A$};
    \begin{tikzpicture}[remember picture, overlay]
      \draw[->, line width=0.8pt] (0.28,1.82) arc (-30:310:0.32cm);
    \end{tikzpicture}
\node[box] (B) at (4,2) {$B$};
\node[box,minimum height=1.5cm] (C) at (4,-2) {$C$};
       \draw[->, line width=0.8pt] (4.36,-2.2) arc (-30:310:0.45cm);
\node[box] (D) at (0,-2) {$D$};

\draw[arrowfwd] (A.east) -- (B.west);
\draw[arrowfwd] (B.south) -- (C.north);
\draw[arrowfwd] (C.west) -- (D.east);
\draw[arrowfwd] (D.north) -- (A.south);

\draw[arrowfwd,black] ([yshift=5pt]B.south west) -- ([yshift=5pt]D.north east)
    node[midway,right=1mm,font=\scriptsize] {};
\draw[arrowbwd,black] ([yshift=-5pt]B.south west) -- ([yshift=-5pt]D.north east)
    node[midway,left=1mm,font=\scriptsize] {};


\node[box,minimum height=1.5cm,minimum width=3cm] (C) at (10,2) {$C$};
\node[box,minimum height=1cm] (D) at (12,-2) {$D$};
       \draw[->, line width=0.8pt] (10.36,1.8) arc (-30:310:0.45cm);
\node[box,minimum height=1cm] (B) at (8,-2) {$B$};
\node at (10, -3.2) {$(b)$};
\node at (2, -3.2) {$(a)$};

\draw[arrowfwd] (B.north) -- (9.5,1.2);
\draw[arrowfwd] (10.5,1.2) -- (D.north);

 \draw[arrowbwd,black] ([yshift=-5pt]B.east) -- ([yshift=-5pt]D.west)
   node[midway,left=1mm,font=\scriptsize] {};
\end{tikzpicture}

    \caption{The oriented graphs in the proof of Proposition~\ref{prop:ex}. Note that $|A|$ may be any integer satisfying $0\le |A|\le |C|$. When $|A|=0$, we have $e(B,D)=0$, and the extremal example is illustrated in~$(b)$.}\label{fig:extremal}
\end{figure}

\begin{proof}
    We construct $G$ as follows. It has $n$ vertices partitioned into four parts $A,B,C,D$ with $|B|>|D|$. Each of $A$ and $C$ spans a tournament, $B$ and $D$ are joined by a bipartite tournament, and we add all edges from $A$ to $B$, from $B$ to $C$, from $C$ to $D$ and from $D$ to $A$ (see Figure~\ref{fig:extremal}). Since every path that joins two vertices in $B$ has to pass through $D$, it follows that every cycle contains at least as many vertices from $D$ as it contains from $B$. As $|B| > |D|$, this means that one cannot find a Hamilton cycle in $G$.

    It remains to show that the sizes of $A,B,C,D$ and the tournaments can be chosen to satisfy $\deg^+(x)+\deg^-(y)=\lceil (3n-3)/4\rceil-1$ for some pairs $(x,y)$ with $xy\notin E(G)$. Table~\ref{tab:extremal} gives possible values, according to the value of $n~\mbox{mod}~4$: we choose the tournaments inside $A$ and $C$ to be as regular as possible, i.e. the indegree and outdegree of every vertex differ by at most $1$. For the bipartite tournament between $B$ and $D$, we choose it so that $|N^+(b) \cap D| =\lceil |A|/2\rceil,  |N^-(b) \cap D| =|D|-\lceil |A|/2\rceil$ for every $b \in B$.

Next we check that there is a pair $x,y\in V(G)$ with $xy\notin E(G)$ and $\deg^+(x)+\deg^-(y)=\lceil (3n-3)/4\rceil-1$, as required.
Denote $a\coloneqq |A|$. Without loss of generality, we may assume that $0\le a\le |C|$ since otherwise we can simply reverse the direction of all edges of $G$ if $a\ge|C|$.
When $n=4k+1$, since $C$ spans a tournament such that the in- and outdegree of every vertex differ by at most $1$ inside $C$, there exists a vertex $u\in C$ such that $\deg^+_C(x)=\lfloor (|C|-1)/2\rfloor$. 
Then for any vertex $y\in B$, we have 
$\deg^+(x)+\deg^-(y)=(\lfloor (|C|-1)/2\rfloor+|D|)+(|A|+|D|-\lceil |A|/2\rceil)=\lceil(3n-3)/4\rceil-1$.
When $n\in\{4k, 4k+2,4k+3\}$, we observe that $\deg^+(x)+\deg^-(y)=|A|+|C|+|D|=\lceil(3n-3)/4\rceil-1$ for any two distinct vertices $x,y\in B$.
\end{proof}

\begin{remark}
One may add any number of edges that either go from $A$ to $C$ or lie within $D$
without creating a Hamilton cycle, while the resulting graph still satisfies the condition of Proposition~\ref{prop:ex}.

\end{remark}

\begin{table}[htbp]
\setlength{\tabcolsep}{14pt}  
\renewcommand{\arraystretch}{1.2}  
    \centering
    \begin{tabular}{c c c c c c }
    \hline
    $n$    & $\lceil (3n-3)/4\rceil-1$ & $|A|$ & $|B|$ & $|C|$ & $|D|$ \\\hline
    $4k$   & $3k-1$ &  $a$ & $k+1$ & $2k-1-a$  & $k$       \\
    $4k+1$ & $3k-1$ &  $a$ & $k+1$ & $2k-a$    & $k$       \\
    $4k+2$ & $3k$   &  $a$ & $k+2$ & $2k-1-a$  & $k+1$     \\
    $4k+3$ & $3k+1$ &  $a$ & $k+2$ & $2k-a$    & $k+1$     \\\hline
        
    \end{tabular}
    \caption{Parameters for the extremal example when $0\le |A|\le |C|$.}
    \label{tab:extremal}
\end{table}

\begin{definition}\label{ex-defi}
    An oriented graph $G$ of $n$ vertices is \emph{$\eta$-extremal} if there are $3$ or $4$ disjoint vertex subsets $A,B,C,D \subset V(G)$ (here $A$ or $C$ can be empty) satisfying that:
     \begin{itemize}
        \item $|A| + |C| = (1/2 \pm O(\eta)) n$,
        \item $|B| = (1/4 \pm O(\eta)) n$,
        \item $|D| = (1/4 \pm O(\eta)) n$;
    \end{itemize}
    and the edges satisfying that:

    \settasks{label = \textbullet} 
\begin{tasks}(2)

\task $e(A,B) > |A||B| -O(\eta)n^2$,
\task $e(B,C) > |B||C| - O(\eta)n^2$,
\task $e(C,D) > |C||D| - O(\eta)n^2$,
\task $e(D,A) > |A||D| - O(\eta)n^2$,
\task $e(B,D) > |A|n/8 - O(\eta)n^2$,
\task $e(D,B) > |C|n/8 - O(\eta)n^2$;
\task  $e(A) > |A|^2/2 - O(\eta)n^2$,
\task $e(C) > |C|^2/2 - O(\eta)n^2$,
\task $e(A,C) < O(\eta)n^2$,
\task $e(D) < O(\eta)n^2$.
\end{tasks}
\end{definition}

\section{Main tools}\label{s3}
In this section, we introduce some tools that will be used throughout the proof, including the Diregularity lemma, probabilistic tools, and some basic properties under our Ore-degree condition. 

\subsection{The Diregularity lemma}
We start with some definitions.
The density of bipartite graph $G[A,B]$ with vertex classes $A$
and $B$ is defined to be
\[d_G(A,B) \coloneqq \frac{e_G(A,B)}{|A||B|}.\]
We often write $d(A,B)$ if this is unambiguous. Given $\epsilon > 0$, we say that $G$ is \emph{$\epsilon$-regular} if for all subset $X\subseteq A$ and $Y\subseteq B$ with $|X| > \epsilon |A|$ and $|Y| > \epsilon |B|$ we have that $|d(X,Y) - d(A,B)| < \epsilon$. Given $d \in [0,1]$ we say that $G$ is \emph{$(\epsilon,d)$-super-regular} if it is $\epsilon$-regular and furthermore $\deg_G(a) \ge (d-\epsilon) |B|$ for all $a\in A$ and $\deg_G(b) \ge (d-\epsilon) |A|$ for all $b\in B$. (This is a slight variation of the
standard definition of $(\epsilon,d)$-super-regularity, where one requires $\deg_G(a)\ge d|B|$ and $\deg_G(b)\ge d|A|$.) 

The Diregularity lemma is a version of the Regularity lemma for digraphs due to Alon and Shapira~\cite{AS2004} (with a similar proof to the undirected version). We will use the following degree form of the Diregularity lemma, which can be derived from the standard version, in exactly the same manner as the undirected degree form (see, for example,~\cite{KO2009} for a sketch proof).

\begin{lemma}[Degree form of the Diregularity lemma]\label{lem:reg}
For every $\epsilon\in(0,1)$ there are integers $M$ and $n_0$ such that if $G$ is a digraph on $n\ge n_0$ vertices and $d\in[0,1]$ is any real number, then there is a partition of the vertices of $G$ into $V_0,V_1,\ldots,V_k$ and a spanning subdigraph $G'$ of $G$ such that the following holds:
\begin{enumerate}[label={\rm(\roman*)}]
\item $M'\le k \le M$,
\item  $|V_0|\le \epsilon n$,
\item  $|V_1| =\cdots =|V_k| \eqqcolon m$,
\item  $\deg^{+}_{G'}(x) > \deg^{+}_{G}(x) -(d + \epsilon)n$ for all vertices $x\in G$,
\item $\deg^{-}_{G'}(x) > \deg^{-}_{G}(x) - (d + \epsilon)n$ for all vertices $x\in G$,
\item for all $1 \le i \le k$ the digraph $G'[V_i]$ is empty,
\item for all $1\le i, j \le k$ and $i\neq j$ the bipartite graph $G'[V_i, V_j]$ whose vertex classes are $V_i$ and $V_j$ and whose edges are all the edges in $G'$ directed from $V_i$ to $V_j$ is $\epsilon$-regular and has density either $0$ or at least $d$.
\end{enumerate}
\end{lemma}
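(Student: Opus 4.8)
## Proof proposal for Lemma~\ref{lem:reg} (degree form of the Diregularity lemma)

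The plan is to deduce the degree form from the standard (non-degree) version of the Diregularity lemma of Alon and Shapira, mimicking exactly the well-known reduction in the undirected setting. First I would invoke the standard Diregularity lemma with a suitably chosen accuracy parameter — say $\epsilon' := \epsilon^2/4$ (or something of that order; the precise value only needs to be small enough compared with $\epsilon$ and $d$) — together with a lower bound $M_0' = M_0'(\epsilon')$ on the number of parts, chosen large enough that $1/M_0' < \epsilon/2$. This yields a partition $V_0, V_1, \dots, V_k$ with $M_0' \le k \le M_0$, $|V_0| \le \epsilon' n$, $|V_1| = \cdots = |V_k| = m$, and the property that all but at most $\epsilon' k^2$ of the ordered pairs $(V_i, V_j)$ induce an $\epsilon'$-regular bipartite graph (in the direction from $V_i$ to $V_j$). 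I take $M := M_0$ and $n_0$ large enough for everything below to go through.

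Next I would construct $G'$ by deleting edges of $G$ that are "unhelpful" for regularity. Concretely, starting from $G$, I delete: (a) every edge incident to $V_0$; (b) every edge lying inside some part $V_i$ (this makes (vi) hold); (c) every edge from $V_i$ to $V_j$ whenever the ordered pair $(V_i,V_j)$ is \emph{not} $\epsilon'$-regular; and (d) every edge from $V_i$ to $V_j$ whenever the ordered pair $(V_i,V_j)$ is $\epsilon'$-regular but has density less than $d$. The resulting digraph $G'$ has the property that for every ordered pair $i \ne j$, the bipartite graph $G'[V_i,V_j]$ is either empty (density $0$) or equals $G[V_i,V_j]$, which is $\epsilon'$-regular of density $\ge d$; since $\epsilon' < \epsilon$, it is in particular $\epsilon$-regular, giving (vii). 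Parts (i)–(iii) and (vi) are immediate from the construction and the choice of parameters.

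The only real content is verifying the degree bounds (iv) and (v) — that each vertex loses at most $(d+\epsilon)n$ out-edges (and, symmetrically, in-edges). Fix $x \in V_i$ with $i \ge 1$ (vertices of $V_0$ are vacuous). The out-edges of $x$ deleted in step (a) number at most $|V_0| \le \epsilon' n$; those deleted in step (b) number at most $|V_i| = m \le n/k \le n/M_0' < (\epsilon/2) n$; those deleted in step (d) number at most $d\cdot m$ per part $V_j$, hence at most $d \cdot k m \le d n$ in total; and those deleted in step (c) are controlled because $x$ can be incident (in the out-direction) to at most $m$ vertices in each irregular part $V_j$, and there are at most $\epsilon' k^2$ irregular ordered pairs, so at most $\epsilon' k^2 \cdot m = \epsilon' k \cdot (km) \le \epsilon' k n \le \epsilon' M_0 n$ — wait, this is too crude; instead I would count globally: the total number of $G$-edges between irregular pairs is at most $\epsilon' k^2 m^2 \le \epsilon' n^2$, but for a \emph{per-vertex} bound I note $x$ sends at most $m$ out-edges into each of the at most $k$ parts, and the number of parts $V_j$ for which $(V_i,V_j)$ is irregular is at most (number of irregular ordered pairs with first coordinate $i$), which need not be small for a fixed $i$. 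The standard fix is to absorb, into $V_0$, those few vertices that are "bad" — but a cleaner route, which I would follow, is simply to choose $\epsilon'$ so that $\epsilon' m \cdot k \le \epsilon' n < (\epsilon/2) n$ after noting that for each vertex $x$ the edges into irregular pairs total at most $\sum_{j} m \mathbf{1}[(V_i,V_j)\text{ irregular}]$, and then observe that summing this over all $x$ in all parts gives at most $\epsilon' k^2 \cdot m \cdot m = \epsilon' n^2$; discarding into $V_0$ the at most $\sqrt{\epsilon'} n$ vertices that individually lose more than $\sqrt{\epsilon'} n$ such edges keeps $|V_0|$ below $\epsilon n$ and makes every surviving vertex lose at most $\sqrt{\epsilon'} n < (\epsilon/2) n$ edges to irregular pairs. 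Combining, every surviving vertex loses at most $\epsilon' n + (\epsilon/2) n + d n + (\epsilon/2) n$ out-edges; choosing $\epsilon'$ small (e.g. $\epsilon' \le \epsilon/100$) this is at most $(d+\epsilon) n$, as required. The in-degree statement (v) is identical by symmetry. I expect the bookkeeping around which vertices to dump into $V_0$ — keeping $|V_0|$ below $\epsilon n$ while simultaneously controlling the per-vertex edge loss — to be the main (though entirely routine) technical point; this is exactly the reduction described in the undirected case in~\cite{KO2009}, and it transfers to digraphs verbatim since the two edge-directions are handled independently.
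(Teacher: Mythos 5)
The paper does not actually prove \Cref{lem:reg}; it cites the derivation to~\cite{KO2009}, so there is no ``paper's own proof'' to compare against line by line. Your high-level plan --- apply the standard Diregularity lemma with a smaller accuracy parameter $\epsilon'$, then delete edges incident to $V_0$, edges inside clusters, edges across irregular pairs, and edges across pairs of density $<d$, and finally bound the per-vertex degree loss --- is exactly the standard route and is the right one. The problem is in the one step you yourself flag as the ``main technical point'': moving individual bad vertices into $V_0$.

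Concretely, after you discard the $\le\sqrt{\epsilon'}n$ vertices whose out-edges into irregular pairs exceed $\sqrt{\epsilon'}n$, the clusters $V_1,\dots,V_k$ no longer have equal sizes, so condition~(iii) fails. You then must trim the remaining clusters down to a common size, putting the trimmed vertices into $V_0$. But the number of extra vertices this creates is governed by $k\cdot(\max_i s_i)$, where $s_i$ is the number of vertices removed from $V_i$, and nothing in the Markov argument prevents the bad vertices from concentrating in a few clusters, so $\max_i s_i$ can be as large as $m=n/k$ and the re-balancing cost can approach $n$. Worse, you cannot defeat this by shrinking $\epsilon'$, because $k$ is only bounded by $M_0(\epsilon')$, which grows (as a tower) when $\epsilon'\to 0$, so $\sqrt{\epsilon'}\cdot k$ need not go to $0$. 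There is also a knock-on effect you do not mention: trimming clusters degrades regularity (via the slicing lemma) and can drop densities below $d$, so (vii) would need a further correction. Finally, a small arithmetic slip: your running total $\epsilon'n+(\epsilon/2)n+dn+(\epsilon/2)n=(d+\epsilon+\epsilon')n$ exceeds the required $(d+\epsilon)n$; you should split $\epsilon$ into three pieces (or similar) rather than two.

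The clean fix, and the one used in the standard derivation, is to move \emph{whole clusters} rather than individual vertices. For a vertex $x\in V_i$, the out-edges lost to irregular pairs are bounded by $m\cdot|\{j:(V_i,V_j)\text{ irregular}\}|$, a quantity that depends only on $i$, not on $x$. Since the number of irregular ordered pairs is at most $\epsilon'k^2$, Markov gives at most $O(\sqrt{\epsilon'}k)$ indices $i$ with more than $\sqrt{\epsilon'}k$ irregular out- (or in-) partners; move those entire clusters into $V_0$. This adds only $O(\sqrt{\epsilon'})n$ vertices to $V_0$, leaves the surviving clusters with size exactly $m$ (so (iii) is untouched and no trimming or slicing-lemma bookkeeping is needed), and uniformly bounds each surviving vertex's loss to irregular pairs by $\sqrt{\epsilon'}km\le\sqrt{\epsilon'}n$. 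Choosing $\epsilon'\ll\epsilon$ and $M'$ large then gives (ii), (iv), (v) with room to spare. With this modification your argument is correct.
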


The vertex sets $V_1, \ldots, V_k$ are called \emph{clusters}, $V_0$ is called the \emph{exceptional set} and the vertices in $V_0$ are called \emph{exceptional vertices}. 
The last condition of the lemma says that all pairs of clusters are $\epsilon$-regular in both directions (but possibly with different densities). 
We call the spanning digraph $G'\subseteq G$ given by the Diregularity lemma the \emph{pure digraph}. 
Given clusters $V_1, \ldots, V_k$ and a digraph $G'$, the reduced digraph $R'$ with parameters $(\epsilon,d)$ is the digraph whose vertex set is $[k]$ and in which $ij$ is an edge if and only if the bipartite graph whose vertex
classes are $V_i$ and $V_j$ and whose edges are all the $V_i$--$V_j$ edges in $G'$ is $\epsilon$-regular and has density at least $d$. 
(Therefore if $G'$ is the pure digraph, then $ij$ is an edge in $R'$ if and only if there is a $V_i$--$V_j$ edge in $G'$.)

Note that the latter holds if
and only if $G'[V_i, V_j]$ is $\epsilon$-regular and has density at least $d$. It turns out that $R'$ inherits many
properties of $G$, a fact that is crucial in our proof. However, $R'$ is not necessarily oriented even
if the original digraph $G$ is, but the next lemma guarantees that we can find a reduced oriented graph $R\subseteq R'$ still approximately
satisfying the original Ore-degree condition.

\begin{lemma}[\cite{keevash2009exact,kelly2008dirac}]\label{lem:reduceR}
For every $\epsilon\in(0,1)$, there exist numbers $M'= M(\epsilon)$ and $n_0=n_0(\epsilon)$ such that the following holds. 
Let $d\in[0,1]$ with $\epsilon\le d/2$, let $G$ be an oriented graph of order $n\ge n_0$ and
let $R'$ be the reduced digraph with parameters $(\epsilon,d)$ obtained by applying the Diregularity
lemma to $G$ with $M'$ as the lower bound on the number of clusters. Then $R'$ has a spanning
oriented subgraph $R$ such that
\begin{enumerate}[label=\nlabel]
    \item for all disjoint sets $S,T\subseteq V(R)$ with $e_G(S^*,T^*)\ge 3dn^2$ we have $e_R(S,T)>d|R|^2$, where $S^*\coloneqq \cup_{i\in S}V_i$ and $T^*\coloneqq \cup_{i\in T}V_i$;
    \item for every set $S\subseteq V(R)$ with $e_G(S^*)\ge 3dn^2$ we have $e_R(S)>d|R|^2$, where $S^*\coloneqq \cup_{i\in S}V_i$;
    \item if $d \le 1-2\epsilon$ and $c\ge 0$ is such that $\deg^+(x)+\deg^-(y) \ge c|G|$ whenever $xy\notin E(G)$, then $\deg^+_R(V_i) + \deg^-_R(V_j)\ge (c -6\epsilon -2d)|R|$ whenever $V_iV_j\notin E(R)$.
\end{enumerate}
\end{lemma}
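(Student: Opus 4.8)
The plan is to follow the standard route used in~\cite{keevash2009exact,kelly2008dirac}: start from the reduced digraph $R'$ produced by the Diregularity lemma, and delete edges to kill all $2$-cycles, producing an oriented graph $R\subseteq R'$. The key point is that we cannot delete edges arbitrarily; we must delete them in a way that keeps enough edges between large vertex sets and keeps the Ore-type degree sums high. First I would pass to the pure digraph $G'$ and the associated reduced digraph $R'$ with parameters $(\epsilon,d)$; recall that $ij\in E(R')$ iff $G'[V_i,V_j]$ is $\epsilon$-regular of density at least $d$. For each pair $\{i,j\}$ with both $ij$ and $ji$ in $E(R')$ we must discard at least one of the two arcs. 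To decide which, I would choose, for each such $2$-cycle, to keep the arc carrying the larger density (breaking ties arbitrarily), and delete the other. This defines $R$.

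For property~(1): suppose $S,T\subseteq V(R)$ are disjoint with $e_G(S^*,T^*)\ge 3dn^2$. Since $G'$ differs from $G$ only in edges lost through the small exceptional set and the density-$(d+\epsilon)$ degree truncation, and since pairs of density $<d$ and irregular pairs together contribute at most $(d+\epsilon)n\cdot n + \epsilon n\cdot n + d m^2 k^2 \le 3dn^2$-ish many edges (here I would do the routine bookkeeping: irregular pairs contribute $\le \epsilon n^2$, sparse pairs $\le d n^2$, the exceptional set $\le \epsilon n^2$, and the degree loss $\le (d+\epsilon)n^2$), it follows that $e_{R'}(S,T)\,m^2 \ge e_G(S^*,T^*) - O(dn^2) > 0$, and in fact $e_{R'}(S,T) > d|R|^2$ after choosing $d$ and $\epsilon$ appropriately relative to the constant $3$. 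The one subtlety is that $R$, not $R'$, is what we want; but each arc of $R'$ we deleted between $S$ and $T$ is compensated by the reverse arc (of at least the same density) which we kept, so $e_R(S,T)\ge \tfrac12 e_{R'}(S,T)$ — and to absorb this factor of $2$ I would simply run the counting with a slightly larger slack, i.e. show $e_{R'}(S,T)>2d|R|^2$, which the same density argument gives since $3dn^2$ comfortably exceeds the total "lost" mass. Property~(2) is identical with $T=S$ and $e_G(S^*)$ in place of $e_G(S^*,T^*)$, using that within a cluster $G'[V_i]$ is empty so all the mass lives in inter-cluster pairs.

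For property~(3): assume $d\le 1-2\epsilon$ and $\deg^+_G(x)+\deg^-_G(y)\ge c|G|$ whenever $xy\notin E(G)$. Take $V_iV_j\notin E(R)$. I would first argue that then $G[V_i,V_j]$ is "sparse": either the pair was irregular, or had density $<d$ in $G'$, or $ij\in E(R')$ but we deleted it in favour of $ji$ — in every case $e_{G'}(V_i,V_j)< d m^2$ roughly (in the deletion case one must be slightly more careful, but one can still bound $e_G(V_i,V_j)$ by noting the reverse pair is dense, and use that $G$ is oriented so the two densities sum to at most $1$; combined with $d\le 1-2\epsilon$ this forces the $V_i\to V_j$ density to be small relative to what matters). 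Hence there exist many pairs $(x,y)\in V_i\times V_j$ with $xy\notin E(G)$ — in fact almost all of them. Pick such a typical pair with $\deg^+_G(x)$ close to the average outdegree of $V_i$ and $\deg^-_G(y)$ close to the average indegree of $V_j$; then $\deg^+_G(x)+\deg^-_G(y)\ge c n$. Now translate outdegree of $x$ in $G$ into outdegree of $V_i$ in $R$: the outneighbourhood of $x$ in $G'$ has size $\ge \deg^+_G(x)-(d+\epsilon)n$, it lies (up to $|V_0|\le\epsilon n$ vertices) in at most $\deg^+_{R'}(V_i)+(\text{sparse pairs from }V_i)$ clusters times $m$, and finally $\deg^+_{R'}(V_i)\le \deg^+_R(V_i)+\deg^-_R(V_i)$ wait — more carefully, every arc of $R'$ out of $V_i$ that we deleted was replaced by an arc of $R$ into $V_i$, so $\deg^+_{R'}(V_i)\le \deg^+_R(V_i)+\deg^-_R(V_i)$, which is the wrong bound. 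Instead I would use that for the specific pair $(x,y)$ we only need $N^+_{G'}(x)$ and $N^-_{G'}(y)$ to be covered by out- resp.\ in-clusters in $R'$, then separately account for the $2$-cycle deletions as an additive $O(\epsilon)|R|$ loss by averaging: the total number of deleted arcs is at most $|R|^2$ but each contributes density $\le d+\epsilon$ worth, so an averaging vertex $V_i$ loses only $O(d+\epsilon)|R|$ in its degree. Assembling, $\deg^+_R(V_i)+\deg^-_R(V_j)\ge \big(\deg^+_G(x)+\deg^-_G(y)\big)/m - O(d+\epsilon)|R| - O(\epsilon)|R| \ge (c-6\epsilon-2d)|R|$ after choosing the error terms' constants.

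\medskip

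\textbf{Main obstacle.} The delicate part is property~(3), and specifically the bookkeeping that converts "$x$ has large outdegree in $G$" into "$V_i$ has large outdegree in the \emph{oriented} graph $R$" while paying only an additive $O(\epsilon+d)|R|$ rather than losing a multiplicative factor from the $2$-cycle deletions. The trick is to not reason about a worst-case cluster $V_i$ but to choose the representative vertices $x\in V_i$, $y\in V_j$ by an averaging argument so that the deleted-arc loss is the \emph{average} loss, which is $O(d+\epsilon)|R|$; this is exactly where "$G$ is oriented" and "$d\le 1-2\epsilon$" get used, since they guarantee a deleted $V_i\to V_j$ arc really did carry little $G$-density relative to its surviving reverse. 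Everything else is routine regularity-lemma counting, and the slack in the hypotheses ($3dn^2$, and the generous $6\epsilon+2d$) is precisely engineered to absorb these losses.
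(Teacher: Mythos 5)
The paper does not prove this lemma; it is imported verbatim from~\cite{keevash2009exact,kelly2008dirac}, so there is no ``paper proof'' to compare your proposal against, only the question of whether your reconstruction is sound. It is not: there is a genuine error in your argument for property~(1), and the argument for property~(3) is not a proof.

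For property~(1), you claim that after keeping the denser arc of each $2$-cycle one has $e_R(S,T)\ge \tfrac12 e_{R'}(S,T)$ because ``each arc of $R'$ we deleted between $S$ and $T$ is compensated by the reverse arc\dots which we kept.'' This is false. If $ij\in E_{R'}(S,T)$ with $i\in S$ and $j\in T$ and you delete $ij$ in favour of $ji$, then $ji$ is an arc from $T$ to $S$ and contributes nothing to $e_R(S,T)$. So the deletion is not compensated at all, and a priori the denser-arc rule could delete every single $S$--$T$ arc of $R'$. The honest consequence of the denser-arc rule is only that each deleted $S$--$T$ pair $(V_i,V_j)$ has $d_{G'}(V_i,V_j)\le 1/2$, which gives $e_{G'}(S^*,T^*)\le m^2\bigl(e_R(S,T)+\tfrac12(e_{R'}(S,T)-e_R(S,T))\bigr)$, hence $e_R(S,T)\ge 3d|R|^2 - e_{R'}(S,T)$. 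Since $e_{R'}(S,T)$ can be as large as $|R|^2/4$, this lower bound is vacuous unless $d>1/8$, whereas the lemma is applied in the paper with $d\ll\eta\ll 1$. So your deterministic rule does not obviously establish~(1), and the factor-of-two step you use to absorb the loss is incorrect.

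For property~(3), you yourself notice midway that $\deg^+_{R'}(V_i)\le \deg^+_R(V_i)+\deg^-_R(V_i)$ is the wrong inequality, and the replacement ``averaging'' step is not carried out: it is not enough to say the \emph{average} degree loss is $O(d+\epsilon)|R|$, because the conclusion must hold for \emph{every} non-edge $V_iV_j$ of $R$, and a single cluster $V_i$ could in principle be involved in $\Omega(|R|)$ double edges all of which resolve against it. You need to explain why the chosen orientation avoids this, and as written the proposal does not. I would recommend consulting the corresponding lemma in~\cite{kelly2008dirac} (their Lemma~3.2) and~\cite{keevash2009exact}, where the $2$-cycles are resolved by a probabilistic choice and the three properties are verified with concentration estimates rather than by a pointwise ``keep the denser arc'' rule.
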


The oriented graph $R$ given by Lemma~\ref{lem:reduceR} is called the \emph{reduced oriented graph}. The spanning oriented subgraph $G^*$ of the pure digraph $G'$ obtained by deleting all the $V_i$--$V_j$ edges whenever $V_iV_j\in E(R')\setminus E(R)$ is called the \emph{pure oriented graph}. Given an oriented subgraph $S \setminus R$, the oriented subgraph of $G^*$ corresponding to $S$ is the oriented subgraph obtained from $G^*$ by deleting all those vertices that lie in clusters not belonging to $S$ as well as deleting all the $V_i$--$V_j$ edges for all pairs $V_i, V_j$ with $V_iV_j\notin E(S)$.

The following Blow-up Lemma of Koml\'{o}s, S\'{a}rk\"{o}zy, and Szemer\'{e}di~\cite{KSS1997} will help us find a set of few paths that covers almost all vertices.

\begin{lemma}[Blow-up Lemma, \cite{KSS1997}]\label{lem:blowup}
    Given a graph $F$ on $[k]$ and positive numbers $d,\Delta$, there is a positive real $\sigma_0 = \sigma_0(d,\Delta,k)$ such that the following holds for all positive numbers $\ell_1,\ldots,\ell_k$ and all $0 < \sigma \le \sigma_0$. Let $F'$ be the graph obtained from $F$ by replacing each vertex $i\in F$ with a set $V_i$ of $\ell_i$ new vertices and joining all vertices in $V_i$ to all vertices in $V_j$ whenever $ij$ is an edge of $F$. Let $G'$ be a spanning subgraph of $F'$ such that for every $ij\in E(F)$ the graph $G'[V_i,V_j]$ is $(\sigma,d)$-super-regular. Then $G'$ contains a copy of every subgraph $H$ of $F'$ with $\Delta(H) \le \Delta$. Moreover, this copy of $H$ in $G'$ maps the vertices of $H$ to the same sets $V_i$ as the copy of $H$ in $F'$, i.e. if $h\in V(H)$ is mapped to $V_i$ by the copy of $H$ in $F'$, then it is also mapped to $V_i$ by the copy of $H$ in $G'$.
\end{lemma}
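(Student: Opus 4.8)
The plan is to follow the randomized greedy embedding strategy that underlies the Blow-up Lemma: I would construct an embedding $\phi\colon V(H)\to V(G')$ one vertex at a time, always placing a vertex of $H$ that is mapped to $V_i$ by the fixed copy of $H$ in $F'$ into the set $V_i$ of $G'$ (so the ``moreover'' clause holds automatically), while controlling the process so that every not-yet-embedded vertex always retains many legal images.

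First I would preprocess $H$ by choosing a small \emph{buffer} set $B\subseteq V(H)$ that is independent in $H$ and spread among the clusters so that, for each $i$, the $V_i$-vertices of $B$ form a constant fraction of $\ell_i$. The vertices of $V(H)\setminus B$ are embedded in a main phase and the vertices of $B$ are finished off at the end. In the main phase, order $V(H)\setminus B$ and embed its vertices greedily: when it is the turn of $h$ (say $h$ is assigned to $V_i$), let the candidate set $C(h)$ be the set of currently unused vertices of $V_i$ that are $G'$-adjacent to $\phi(h')$ for every already-embedded neighbour $h'$ of $h$, and choose $\phi(h)$ uniformly at random from $C(h)$ --- except that any vertex whose candidate set threatens to become too small is promoted and embedded right away. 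Using $\sigma$-regularity, fixing the image of a single neighbour shrinks a candidate set by roughly the factor $d$ while essentially preserving regularity to the remaining clusters, so after all $\le\Delta$ neighbours of $h$ have been placed the set $C(h)$ still has size $\gtrsim d^{\Delta}\ell_i$. The core of the argument is a concentration estimate --- an Azuma--Hoeffding bound on the Doob martingale exposing the random choices, combined with a union bound over the vertices of $H$ and over sufficiently many ``large'' subsets --- showing that with high probability no candidate set shrinks faster than expected and no cluster is filled prematurely.

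After the main phase, every unembedded buffer vertex $h$ assigned to $V_i$ still has a candidate set $C(h)$ that is a constant fraction of the unused part of $V_i$, because every neighbour of $h$ lies outside $V_i$ by independence of $B$ and super-regularity keeps $C(h)$ large. I would then check that, for each $i$, the bipartite graph between the remaining $V_i$-buffer vertices and the unused vertices of $V_i$ satisfies Hall's condition --- once more by super-regularity, since the union of the candidate sets of any family of these buffer vertices is large --- and take a perfect matching in each cluster to complete $\phi$.

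The main obstacle is the concentration step: one has to control the dependencies created by the sequence of random greedy choices and, at the same time, preserve enough regularity and super-regularity throughout the process so that both the running lower bounds on the candidate sets and the final Hall condition survive; the delicate bookkeeping lies in identifying and promoting the few vertices whose candidate sets would otherwise collapse before their turn.
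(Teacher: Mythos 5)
The paper does not prove this lemma: it is invoked as a black box, cited directly to Koml\'os, S\'ark\"ozy, and Szemer\'edi \cite{KSS1997}, so there is no in-paper argument to compare against. Your sketch is a faithful high-level summary of the original KSS proof strategy --- randomized greedy embedding with candidate sets, a reserved independent buffer, on-the-fly promotion of endangered vertices, regularity/super-regularity preservation with a concentration estimate, and a final Hall/K\"onig matching step to place the buffer --- so it takes exactly the route the cited source does. Since the paper itself offers no proof, there is nothing further to reconcile; just be aware that the full verification of the concentration step and the regularity bookkeeping (the parts you flag as ``the main obstacle'') is where the real length of the KSS argument lies, and a sketch at this level would not be a substitute for the citation in the paper.
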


\begin{proposition}[\cite{kelly2008dirac}]\label{prop:superregular}
    Let $M',n_0,D$ be positive numbers and let $\epsilon,d$ be positive reals such that $1/n_0 \ll 1/M' \ll \epsilon \ll d \ll 1/D$. Let $G$ be an oriented graph of order at least $n_0$. Let $R$ be the reduced oriented graph with parameters $(\epsilon,d)$ and let $G^*$ be the pure oriented graph obtained by successively applying first the Diregularity Lemma with $\epsilon$, $d$ and $M'$ to $G$ and then \cref{lem:reduceR}. Let $S$ be an oriented subgraph of $R$ with $\Delta(S) \le D$. Let $G'$ be the underlying graph of $G^*$. Then one can delete $2D\epsilon|V_i|$ vertices from each cluster $V_i$ to obtain subclusters $V'_i \subseteq V_i$ in such a way that $G'$ contains a subgraph $G'_S$ whose vertex set is the union of $V'_i$ and such that
    \begin{itemize}
        \setlength{\itemsep}{0pt}
        \setlength{\parsep}{0pt}
        \setlength{\parskip}{0pt}
        \item $G'_S[V'_i,V'_j]$ is $(\sqrt{\epsilon},d-4D\epsilon)$-super-regular whenever $ij\in E(S)$,
        \item $G'_S[V'_i,V'_j]$ is $\sqrt{\epsilon}$-regular and has density $d-4D\epsilon$ whenever $ij\in E(R)$.
    \end{itemize}
\end{proposition}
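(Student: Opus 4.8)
The plan is to carry out the standard ``clean-up'' that upgrades regular pairs to super-regular ones. Concretely, I would delete from each cluster the few vertices whose degree towards a neighbouring cluster of $S$ is atypically small, pad each deletion so that exactly $2D\epsilon|V_i|$ vertices are removed from every cluster, and finally verify two things: that $\epsilon$-regularity passes to the retained sub-pairs (with parameter $\sqrt\epsilon$), and that every retained vertex has enough neighbours in the other retained cluster to meet the degree clause in the paper's (purely degree-based) definition of $(\sqrt\epsilon,d-4D\epsilon)$-super-regularity.

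The starting point is the routine consequence of $\epsilon$-regularity: if a bipartite graph $H[X,Y]$ is $\epsilon$-regular of density $\rho$, then fewer than $\epsilon|X|$ vertices $x\in X$ have $\deg_H(x,Y)<(\rho-\epsilon)|Y|$, since otherwise those vertices together with $Y$ would contradict $\epsilon$-regularity. For each $ij\in E(R)$ the pair $G'[V_i,V_j]$ is $\epsilon$-regular of some density $\rho_{ij}\ge d$ (inherited from the pure digraph, as $ij\in E(R)$ and $G^*$ keeps exactly the edges of that pair), so for every $ij\in E(S)$ there are fewer than $\epsilon m$ vertices $a\in V_i$ with $\deg_{G'}(a,V_j)<(d-\epsilon)m$, where $m$ denotes the common cluster size, and symmetrically on the $V_j$ side. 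I would then, for each cluster $V_i$, let $W_i^0\subseteq V_i$ be the union of all these ``bad'' sets over the at most $D$ edges of $S$ incident with $i$ (this is where $\Delta(S)\le D$ is used), so that $|W_i^0|<D\epsilon m$; pick any $W_i\supseteq W_i^0$ with $|W_i|=2D\epsilon m$, set $V_i':=V_i\setminus W_i$ so that $|V_i'|=(1-2D\epsilon)m$, and take $G'_S:=G'[\bigcup_i V_i']$. Since $G^*$ has no edges inside a cluster and no $V_i$--$V_j$ edges unless $ij\in E(R)$, every edge of $G'_S$ runs between a pair $V_i',V_j'$ with $ij\in E(R)$, and $G'_S[V_i',V_j']$ is the full restriction of the corresponding $\epsilon$-regular pair.

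For the verification, fix $ij\in E(R)$. Since $\epsilon\ll 1/D$ we retain a $(1-2D\epsilon)$-fraction of each of $V_i$ and $V_j$, which is far more than an $\epsilon$-fraction, so by the standard restriction property of regular pairs $G'_S[V_i',V_j']$ is $\sqrt\epsilon$-regular with density within $\epsilon$ of $\rho_{ij}$, hence density at least $d-\epsilon\ge d-4D\epsilon$; this gives the second bullet. If in addition $ij\in E(S)$, then any $a\in V_i'$ was not discarded, so $\deg_{G'}(a,V_j)\ge(d-\epsilon)m$, and deleting the $2D\epsilon m$ vertices of $W_j$ leaves $\deg_{G'_S}(a,V_j')\ge(d-\epsilon-2D\epsilon)m\ge(d-4D\epsilon-\sqrt\epsilon)|V_j'|$ (using $m\ge|V_j'|$ and $\sqrt\epsilon\ge\epsilon$); by symmetry every $b\in V_j'$ has $\deg_{G'_S}(b,V_i')\ge(d-4D\epsilon-\sqrt\epsilon)|V_i'|$. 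Combined with the $\sqrt\epsilon$-regularity already established, $G'_S[V_i',V_j']$ is $(\sqrt\epsilon,d-4D\epsilon)$-super-regular, which is the first bullet.

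I do not expect a genuine obstacle here; the argument is routine bookkeeping. The two points that need a little care are matching the paper's degree-based definition of super-regularity — it is precisely this formulation that makes the $\sqrt\epsilon$ slack in the final degree estimate go through without any fuss — and checking that the number of bad vertices inside a single cluster never exceeds $2D\epsilon m$, so that the uniform deletion of exactly $2D\epsilon m$ vertices per cluster is feasible; this is where the hypothesis $\Delta(S)\le D$ is needed.
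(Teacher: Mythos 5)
The paper does not prove \cref{prop:superregular}; it cites it from Kelly, K\"uhn, and Osthus~\cite{kelly2008dirac}. Your argument is correct and is essentially the standard proof that appears in that reference: remove from each cluster the atypical vertices for each incident edge of $S$, pad the deletions so they are uniform across clusters, and then invoke the slicing lemma for regularity plus a direct degree count for the super-regularity clause. The bookkeeping all checks out — $\Delta(S)\le D$ bounds the bad set by $O(D\epsilon m)$ per cluster, the retained $(1-2D\epsilon)$-fraction keeps $\epsilon'=\max(\epsilon/\alpha,2\epsilon)\le\sqrt\epsilon$ since $\epsilon\ll 1/D$, and the surviving vertices have degree at least $(d-\epsilon-2D\epsilon)m\ge(d-4D\epsilon)|V_j'|$, which comfortably meets the degree-based definition of $(\sqrt\epsilon,d-4D\epsilon)$-super-regularity used in this paper.
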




\subsection{Probabilistic tools}
We will rely on the following form of Chernoff's bounds that in particular provides large concentration inequalities for binomial random variables.

\begin{lemma}
    [Chernoff's bounds]\label{lem:chernoff}
    Let $\bX_1, \ldots, \bX_n$ be i.i.d. $(0,1)$-valued random variables, and let $\bS_n \coloneqq \sum_{i=1}^n \bX_i$. Let us write $\mu\coloneqq \esp{\bS_n}$. Then
    \begin{enumerate}[label={\rm(\roman*)}]
        \item for every $0 <\delta < 1$,
         \[ \pr{\bS_n \le (1-\delta)\mu } \le e^{-\delta^2\mu/2};\]
         \label{it:upper}
        \item for every $\delta > 0$, 
        \[ \pr{\bS_n \ge (1+\delta)\mu } \le e^{-\delta^2\mu/(2+\delta)}.\]
        \label{it:lower}
    \end{enumerate}
\end{lemma}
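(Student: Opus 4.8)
The plan is to use the standard exponential-moment (Chernoff--Bernstein) method: for each tail we apply Markov's inequality to $e^{\pm t\bS_n}$ for a well-chosen $t>0$, factorize the moment generating function using independence, optimize over $t$, and finish with a short one-variable calculus estimate.

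For part~(ii), write $p_i\coloneqq\esp{\bX_i}$, so $\mu=\sum_{i=1}^n p_i$. For any $t>0$, Markov's inequality gives
\[
\pr{\bS_n\ge(1+\delta)\mu}=\pr{e^{t\bS_n}\ge e^{t(1+\delta)\mu}}\le e^{-t(1+\delta)\mu}\,\esp{e^{t\bS_n}}.
\]
Since the $\bX_i$ are independent and $(0,1)$-valued, $\esp{e^{t\bS_n}}=\prod_{i=1}^n\bigl(1+p_i(e^{t}-1)\bigr)\le\exp\!\bigl((e^{t}-1)\mu\bigr)$, using $1+x\le e^x$. Hence $\pr{\bS_n\ge(1+\delta)\mu}\le\exp\!\bigl(\bigl((e^t-1)-t(1+\delta)\bigr)\mu\bigr)$, and taking $t=\ln(1+\delta)$ yields the sharp bound $\pr{\bS_n\ge(1+\delta)\mu}\le\bigl(e^{\delta}(1+\delta)^{-(1+\delta)}\bigr)^{\mu}$. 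It then remains to check $e^{\delta}(1+\delta)^{-(1+\delta)}\le e^{-\delta^2/(2+\delta)}$ for all $\delta>0$, i.e.\ $\ln(1+\delta)\ge\tfrac{2\delta}{2+\delta}$; this holds because both sides vanish at $\delta=0$ and the derivative of the difference equals $\tfrac{\delta^2}{(1+\delta)(2+\delta)^2}\ge0$.

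Part~(i) is symmetric. For $t>0$,
\[
\pr{\bS_n\le(1-\delta)\mu}=\pr{e^{-t\bS_n}\ge e^{-t(1-\delta)\mu}}\le e^{t(1-\delta)\mu}\,\esp{e^{-t\bS_n}}\le\exp\!\bigl(\bigl((e^{-t}-1)+t(1-\delta)\bigr)\mu\bigr),
\]
again via $\esp{e^{-t\bX_i}}\le\exp\!\bigl(p_i(e^{-t}-1)\bigr)$. Choosing $t=-\ln(1-\delta)>0$ gives $\pr{\bS_n\le(1-\delta)\mu}\le\bigl(e^{-\delta}(1-\delta)^{-(1-\delta)}\bigr)^{\mu}$, and one verifies $e^{-\delta}(1-\delta)^{-(1-\delta)}\le e^{-\delta^2/2}$ for $0<\delta<1$, equivalently $(1-\delta)\ln(1-\delta)\ge\tfrac{\delta^2}{2}-\delta$; here the difference again vanishes at $\delta=0$ and has derivative $-\ln(1-\delta)-\delta\ge0$ on $[0,1)$.

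The only point requiring any care — the ``main obstacle'', modest as it is — is converting the sharp bounds $\bigl(e^{\delta}/(1+\delta)^{1+\delta}\bigr)^{\mu}$ and $\bigl(e^{-\delta}/(1-\delta)^{1-\delta}\bigr)^{\mu}$ into the stated clean exponential form, since one must track the constants precisely to land exactly on the denominators $2+\delta$ and $2$. The remaining ingredients (Markov's inequality, independence, $1+x\le e^x$, and the optimal choice of $t$) are entirely routine.
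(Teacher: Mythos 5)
Your proof is correct and is the standard exponential-moment derivation of the Chernoff bounds; the paper states Lemma~\ref{lem:chernoff} without proof as a classical result, so there is nothing to compare against. The only small remark is that your argument does not actually need the variables to be identically distributed (only independent and $\{0,1\}$-valued, or even $[0,1]$-valued via convexity of $x\mapsto e^{tx}$), so it proves slightly more than stated; the two calculus verifications $\ln(1+\delta)\ge \tfrac{2\delta}{2+\delta}$ and $-\ln(1-\delta)\ge\delta$ are both checked correctly.
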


\begin{lemma}\label{lem:absorberfamily}
    Let $\sigma$ be a real number with $0 < \sigma < 1$ and $t$ be an integer with $1 \le t \le 4$. Then there exists an integer $n_0$ such that whenever $n\ge n_0$ the following holds. Let $G$ be an oriented graph on $n$ vertices and $U$ be a vertex subset with $|U| = n' \ge n/2$. Let $S\subseteq V(G) \times V(G)$ be a set of pairs of vertices (not necessary to be distinct). Suppose that $\cL(u,v)$ is a family of ordered $t$-tuples of $U$ such that $|\cL(u,v)| \ge \sigma n^t$ for every $(u,v)\in S$. Then there exists a family $\cF \subseteq \bigcup \cL(u,v)$ of vertex-disjoint $t$-tuples, which satisfies the following properties:
    \[|\cF| \le 2^{-6}\sigma n,~~~~~~|\cL(u,v) \cap \cF| \ge 2^{-10}\sigma^2 n\]
    for all $(u,v)\in S$.
\end{lemma}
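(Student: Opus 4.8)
The plan is to prove Lemma~\ref{lem:absorberfamily} by a standard probabilistic deletion argument: sample each $t$-tuple in $\bigcup_{(u,v)\in S}\cL(u,v)$ independently with a suitably small probability $p$, then delete tuples that intersect others, and show that with positive probability the surviving family is both small and still hits every $\cL(u,v)$ in many tuples.

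\smallskip

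\textbf{Step 1: Random selection.} Let $\cF_0 \subseteq \bigcup_{(u,v)\in S}\cL(u,v)$ be obtained by including each $t$-tuple independently with probability $p \coloneqq 2^{-8}\sigma n^{1-t}$ (so that $\esp{|\cF_0 \cap \cL(u,v)|} \ge p\sigma n^t = 2^{-8}\sigma^2 n$ for each $(u,v)\in S$, and $\esp{|\cF_0|} \le p n^t = 2^{-8}\sigma n$, using that $\bigcup\cL(u,v)$ has at most $n^t$ elements). Since $t \le 4$ is fixed, $n^t$ is polynomial in $n$, and $p\sigma n^t = \Theta(n)$, so Chernoff's bounds (Lemma~\ref{lem:chernoff}) give that with probability $1 - o(1)$ we simultaneously have $|\cF_0| \le 2^{-7}\sigma n$ and $|\cF_0 \cap \cL(u,v)| \ge 2^{-9}\sigma^2 n$ for \emph{every} $(u,v)\in S$ --- here we take a union bound over the at most $n^2$ pairs in $S$, which is fine because each failure probability is $e^{-\Omega(n)}$.

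\smallskip

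\textbf{Step 2: Bounding intersecting pairs.} Call two $t$-tuples \emph{conflicting} if they share a vertex. A fixed $t$-tuple $T$ conflicts with at most $t^2 n^{t-1}$ other tuples in $\bigcup\cL(u,v)$ (choose one of the $t$ coordinates of $T$ to be shared, one of the $t$ coordinates of the other tuple to coincide with it, and the remaining $t-1$ coordinates freely). Hence the expected number of conflicting ordered pairs inside $\cF_0$ is at most $p^2 \cdot n^t \cdot t^2 n^{t-1} = t^2 p^2 n^{2t-1} = t^2 \cdot 2^{-16}\sigma^2 n$. By Markov's inequality, with probability at least $3/4$ the number of conflicting pairs in $\cF_0$ is at most $4t^2 \cdot 2^{-16}\sigma^2 n \le 2^{-10}\sigma^2 n$ (using $t\le 4$, so $4t^2 \le 64 = 2^6$). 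Combining with Step~1, there is an outcome of $\cF_0$ satisfying all three bounds at once.

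\smallskip

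\textbf{Step 3: Deletion and verification.} Fix such an outcome $\cF_0$. Delete one tuple from each conflicting pair to obtain a family $\cF$ of pairwise vertex-disjoint $t$-tuples. We deleted at most $2^{-10}\sigma^2 n$ tuples, so $|\cF| \le |\cF_0| \le 2^{-7}\sigma n \le 2^{-6}\sigma n$, and for each $(u,v)\in S$,
\[
|\cF \cap \cL(u,v)| \;\ge\; |\cF_0 \cap \cL(u,v)| - 2^{-10}\sigma^2 n \;\ge\; 2^{-9}\sigma^2 n - 2^{-10}\sigma^2 n \;=\; 2^{-10}\sigma^2 n,
\]
as required. (One can absorb the slight slack in the constants by choosing $n_0$ large enough that lower-order Chernoff terms are negligible.)

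\smallskip

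\textbf{Main obstacle.} The only real subtlety is bookkeeping the constants so that the losses in Step~3 (from deletions) are dominated by the gains in Steps~1--2; choosing $p$ a fixed dyadic fraction times $\sigma n^{1-t}$ and keeping a factor of roughly $4$ of room at each estimate handles this. A secondary point is that the bound $|\bigcup\cL(u,v)| \le n^t$ must be used (rather than $|S|\cdot \sigma n^t$, which could be larger); this is automatic since all tuples live in $U^t \subseteq V(G)^t$. Everything else is routine first-moment/Chernoff estimation, and no deeper idea is needed.
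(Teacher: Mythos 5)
Your proposal is correct and follows essentially the same probabilistic deletion argument as the paper: sample each $t$-tuple independently with probability $\Theta(\sigma n^{1-t})$, apply Chernoff plus a union bound over the at most $n^2$ pairs in $S$, bound the expected number of intersecting tuple pairs and apply Markov, then delete one tuple from each conflict. The only differences are cosmetic (you parameterize $p$ in terms of $n$ rather than $n'=|U|$, and your dyadic constants are shifted slightly); the bookkeeping in both cases leaves the same $2^{-10}\sigma^2 n$ margin after deletion.
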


\begin{proof}
    Choose $n_0$ large so that
    \begin{align}
        \exp(-\sigma n_0/(3\times 2^7)) + n_0^2 \exp(-\sigma^2 n_0/2^8) \le 1/6\label{eq:4.13}
    \end{align}
    Let us choose a family $\cF'$ of $t$-tuples in $U$ by selecting each of the $n'!/(n'-t)!$ possible tuples independently at random with probability
    \[p = 2^{-7} \sigma \frac{(n'-t)!}{(n'-1)!} \ge 2^{-7} \sigma n'^{(-t+1)}.\]
    Notice that
    \[\bE[|\cF'|] = p \frac{n'!}{(n'-t)!} = 2^{-7} \sigma n',\]
    \[\bE[|\cL(u,v) \cap \cF'|] = p |\cL(u,v)| \ge 2^{-7} \sigma^2 n'\]
    for every $(u,v)\in S$. Then by Chernoff's bound, the union bound and (\ref{eq:4.13}), with probability $2/3$, the families $\cF'$ satisfy the following properties:
    \[|\cF'| \le 2\bE[|\cF'|] = 2^{-6} \sigma n' \le 2^{-6} \sigma n,\]
    \[|\cL(u,v) \cap \cF'| \ge 2^{-1} \bE[|\cL(u,v) \cap \cF'| \ge 2^{-8} \sigma^2 n'.\]
    We say two tuples are \emph{intersecting} if they share at least one common vertex. We can bound the expected number of pairs of tuples in $\cF'$ that are intersecting from above be
    \[\frac{n'!}{(n'-t)!} \cdot t^2 \cdot \frac{(n'-1)!}{(n'-t)!} \cdot p^2 \le 2^{-10} \sigma^2 n'.\]
    Thus, using Markov's inequality, it holds that with probability at least $1/2$,
    \[\cF' \text{ contains at most } 2^{-9} \sigma^2 n' \text{ intersecting pairs of tuples}.\]
    Hence, with positive probability the family $\cF'$ satisfies properties. Remove one tuple in each intersecting pair in $\cF'$. We get a family $\cF$ consisting of pairwise disjoint tuples, which satisfies
    \[|\cL(u,v) \cap \cF| \ge 2^{-8} \sigma^2 n' - 2^{-9} \sigma^2 n' \ge 2^{-10} \sigma^2 n.\]
\end{proof}

\subsection{Basic properties}
We can obtain the following properties by Ore-degree condition.

\begin{proposition}\label{le:delta0}
Let $G$ be an $n$-vertex oriented graph with Ore-degree condition. We have that $\delta^0(G)\ge n/8$.
\end{proposition}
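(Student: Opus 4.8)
The plan is to derive a contradiction from the existence of a vertex with small out- or in-degree. Suppose first that some vertex $x$ has $\deg^+(x) < n/8$. I want to find a vertex $y$ with $xy \notin E(G)$ and $\deg^-(y)$ also not too large, so that the Ore-degree condition $\deg^+(x)+\deg^-(y) \ge (3n-3)/4$ is violated. The key observation is a counting/averaging argument: the set of vertices $y$ for which $xy \in E(G)$ has size $\deg^+(x) < n/8$, so the complementary set $Z \coloneqq V(G) \setminus (N^+(x) \cup \{x\})$ has size $> n - 1 - n/8 = (7n-8)/8$; note that for every $y \in Z$ we have $xy \notin E(G)$. Since $G$ is an \emph{oriented} graph, for each such $y$ either $yx \in E(G)$ or there is no edge between $x$ and $y$ at all, but in either case $xy \notin E(G)$, so the Ore-degree condition applies to the pair $(x,y)$ and forces $\deg^-(y) \ge (3n-3)/4 - \deg^+(x) > (3n-3)/4 - n/8 = (5n-6)/8$.

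Now I would double-count edges into $Z$: on one hand $\sum_{y \in Z} \deg^-(y) \le e(G) + (\text{edges from outside } Z \text{ into } Z)$, but more simply, every in-edge of a vertex $y \in Z$ comes from some vertex of $G$, so $\sum_{y\in Z}\deg^-(y) \le \sum_{y\in Z}|N^-(y)| \le |Z|(n-1)$ trivially, which is too weak. A sharper bound: since $G$ is oriented, for any fixed vertex $v$, $v$ sends at most one edge to each $y$, so $\sum_{y \in Z}\deg^-(y) = e(V(G), Z) \le$ (number of ordered pairs $(v,y)$ with $v \to y$, $y \in Z$), and each $v$ contributes $\deg^+(v) \cap Z \le \deg^+(v)$; this still needs a global out-degree bound. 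Instead I would use the fact that $e(V(G),Z) = e(Z) + e(V(G)\setminus Z, Z)$ and bound $e(Z) \le \binom{|Z|}{2}$ and $e(V(G)\setminus Z, Z) \le |V(G)\setminus Z|\cdot|Z|$, giving $\sum_{y\in Z}\deg^-(y) \le \binom{|Z|}{2} + (n-|Z|)|Z| = |Z|\left(n - \tfrac{|Z|+1}{2}\right)$. Comparing with the lower bound $\sum_{y \in Z}\deg^-(y) > |Z|\cdot\frac{5n-6}{8}$ yields $n - \tfrac{|Z|+1}{2} > \frac{5n-6}{8}$, i.e. $|Z| < \tfrac{3n+2}{4}$, which is not yet a contradiction with $|Z| > (7n-8)/8$ unless the constants are tightened. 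I expect the actual argument needs a slightly more careful choice of which set to sum over — perhaps restricting attention to $Z' \subseteq Z$ consisting of vertices $y$ with both $xy \notin E(G)$ and controlling $e(Z')$ via a second application of the Ore condition inside $Z'$, or bootstrapping: every $y \in Z$ has large $\deg^-$, hence small $\deg^+$ is impossible for most of $Z$, so $\sum_{y\in Z}\deg^+(y)$ is small, which combined with $\sum_v \deg^+(v) = \sum_v \deg^-(v)$ over all of $V(G)$ gives the needed inequality. The case $\deg^-(x) < n/8$ is symmetric (reverse all edges).

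The main obstacle I anticipate is getting the constants to close: the naive double-counting loses a factor, so the proof will likely iterate — first establish a weaker bound like $\delta^0(G) \ge n/16$ or $\delta^+(G) + \delta^-(G)$ bounds, then feed that back in, or alternatively sum the Ore inequality over a cleverly chosen family of non-adjacent pairs (for instance, if $\deg^+(x)$ is minimal, pair $x$ with every vertex in a large independent-in-$G$ set and exploit that inside that set the complement graph is dense, so the out-degrees there are constrained). I would also keep in mind the extremal examples from Proposition~\ref{prop:ex}, where part $D$ (or a vertex of $A$) achieves degree close to $n/8$, to make sure the argument is tight and identify exactly where equality would force the extremal structure.
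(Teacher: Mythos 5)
Your first double-counting attempt is in fact the correct proof, and it does close — you simply mis-assessed the final arithmetic. You derived $|Z| < (3n+2)/4$ from the bound $\sum_{y\in Z}\deg^-(y)\le \binom{|Z|}{2}+(n-|Z|)|Z| = |Z|\bigl(n-\tfrac{|Z|+1}{2}\bigr)$, and separately $|Z| > (7n-8)/8$. But $(7n-8)/8 \ge (3n+2)/4 = (6n+4)/8$ holds as soon as $n\ge 12$, so the two inequalities are contradictory for all but a handful of small $n$; there is no need for tightening, bootstrapping, or any of the alternative routes you sketch in the last two paragraphs. The paper's own proof is the same averaging argument with an even cruder upper bound: it just observes that $\sum_{y\in W}\deg^-(y)\le e(G)\le\binom{n}{2}$ (since $G$ is oriented), where $W=\{y:xy\notin E(G)\}$, while the Ore condition forces each term to exceed $(5n-3)/8$, and $|W|>7n/8$; multiplying out, $\tfrac{7n}{8}\cdot\tfrac{5n-3}{8}>\binom{n}{2}$ already yields the contradiction. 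Your refinement (splitting $\sum_{y\in Z}\deg^-(y)$ into $e(Z)+e(V\setminus Z,Z)$) is strictly sharper than the paper's $\binom{n}{2}$ bound, so if anything your version has more room to spare. In short: the proposal's core is correct and identical in spirit to the paper; the only real issue is that you talked yourself out of a working argument.
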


\begin{proof}
Suppose that there is a vertex $x$ with $\deg^+(x)< n/8$. Let $W\coloneqq\{y:xy\notin E(G)\}$. Clearly we have $|W|\ge 7n/8$. For each vertex $y\in W$, $\deg^-(y)\ge (3n-3)/4-\deg^+(x)\ge (5n-3)/8$ according to Ore-degree condition.
Since $G$ is an oriented graph, we have
\begin{align}
    \binom{n}{2}\ge\sum_{\substack{y\in W}}\deg^-(y)\ge (5n-3)/8|W|> 17n^2/32\nonumber,
\end{align}
which is a contradiction.
By symmetry, we can also prove $\delta^-(G)\ge n/8$. Therefore, $\delta^0(G)\ge n/8$.
\end{proof}

\begin{lemma}
    \label{lm:emptyset}
    Assume that $1/n \ll \sigma \ll 1$. Let $G$ be an $n$-vertex oriented graph with Ore-degree condition. If a vertex subset $X\subseteq V(G)$ satisfies that $e(X) \le \sigma n^2$, then $|X| \le \frac{n}{4} + 21\sigma n$.
\end{lemma}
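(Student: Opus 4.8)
The plan is a double-counting argument over the non-adjacent ordered pairs inside $X$, for which it is essential that $G$ is oriented. Write $x\coloneqq|X|$. If $x<n/4$ there is nothing to prove, so I may assume $x\ge n/4$; in particular $x$ is large. Let $P$ denote the set of ordered pairs $(u,v)$ with $u,v\in X$, $u\ne v$ and $uv\notin E(G)$. Since $G[X]$ is an oriented graph, exactly $e(X)$ of the $x(x-1)$ ordered pairs of distinct vertices of $X$ form an edge, so $|P|=x(x-1)-e(X)\ge x(x-1)-\sigma n^2$. For every $(u,v)\in P$ the Ore-degree condition gives $\deg^+(u)+\deg^-(v)\ge(3n-3)/4$, and hence
\[
  \sum_{(u,v)\in P}\left(\deg^+(u)+\deg^-(v)\right)\ \ge\ \left(x(x-1)-\sigma n^2\right)\cdot\frac{3n-3}{4}.
\]

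For an upper bound I would drop the constraint $uv\notin E(G)$, which only adds nonnegative terms, so the left-hand side is at most $\sum_{u,v\in X,\,u\ne v}\left(\deg^+(u)+\deg^-(v)\right)=(x-1)\left(\sum_{u\in X}\deg^+(u)+\sum_{v\in X}\deg^-(v)\right)$. Splitting the out-edges and in-edges of $X$ according to whether the other endpoint lies in $X$, one has $\sum_{u\in X}\deg^+(u)+\sum_{v\in X}\deg^-(v)=2e(X)+e(X,V\setminus X)+e(V\setminus X,X)$, and since $G$ is oriented there is at most one edge between any pair in $X\times(V\setminus X)$, so $e(X,V\setminus X)+e(V\setminus X,X)\le x(n-x)$. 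Together with $e(X)\le\sigma n^2$ this gives
\[
  \sum_{(u,v)\in P}\left(\deg^+(u)+\deg^-(v)\right)\ \le\ (x-1)\left(2\sigma n^2+x(n-x)\right).
\]

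Comparing the two estimates and dividing by $x-1>0$ yields $\tfrac{3(n-1)}{4}x\le 2\sigma n^2+x(n-x)+\tfrac{3(n-1)\sigma n^2}{4(x-1)}$. Since $x\ge n/4$ and $1/n\ll\sigma$, for $n$ large the last term is at most $(3+\sigma)\sigma n^2$; substituting and rearranging turns the inequality into $x\left(x-\tfrac{n+3}{4}\right)\le(5+\sigma)\sigma n^2$. Using $x\le n$ to replace $\tfrac{3x}{4}$ by $\tfrac{3n}{4}$ and then dividing by $n/4$ (using $x\ge n/4$), I get $x-\tfrac{n}{4}\le 4(5+\sigma)\sigma n+3\le 21\sigma n$ for $n$ large, which is the assertion.

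Apart from this the proof is a routine calculation, so the only step requiring real care is the last one, where the lower-order terms must be tracked precisely enough to land on the stated constant $21$ rather than something slightly larger. The single genuine idea is the orientedness inequality $e(X,V\setminus X)+e(V\setminus X,X)\le x(n-x)$: it forces the total out- plus in-degree over $X$ to be so small that, once $|X|$ exceeds $n/4$ by more than roughly $21\sigma n$, the many non-edges inside $X$ cannot all satisfy the $(3n-3)/4$ Ore bound.
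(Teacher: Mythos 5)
Your proof is correct and follows essentially the same double-counting argument as the paper: sum the Ore-degree inequality over the non-adjacent ordered pairs inside $X$, bound the resulting degree sum above via $2e(X)+e(X,V\setminus X)+e(V\setminus X,X)\le 2\sigma n^2+|X|(n-|X|)$, and rearrange. Your use of the factor $(x-1)$ rather than $|X|$ and your final bookkeeping differ cosmetically but not substantively from the paper's manipulation, and the constant $21$ comes out the same way.
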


\begin{proof}
    We may assume that $|X| \ge n/4$. Let us consider all pairs of vertices $u,v\in X$ with $uv\notin E(G)$. By Ore-degree condition we have
    \begin{align*}
        \frac{3n-3}{4} \left( |X|(|X|-1) - e(X) \right) &\le \sum_{u,v\in X, uv\notin E(G)} (\deg^+(u) + \deg^-(v))\\
        &\le |X| \sum_{v\in X} (\deg^+(v) + \deg^-(v))\\
        &\le |X| (|X| (n-|X|) + 2e(X)).
    \end{align*}
    Dividing both sides by $|X|^2$, we obtain
    \[\frac{3n-3}{4} \left( \frac{|X|-1}{|X|} - \frac{e(X)}{|X|^2} \right) \le n-|X| + \frac{2e(X)}{|X|}.\]
    Hence we have
    \[|X| \le n + 8\sigma n - \frac{3n-3}{4} \cdot (1 - 16\sigma - \frac{1}{|X|}) \le \frac{n}{4} + 21\sigma n.\]
\end{proof}

\section{Non-extremal case: proof of Theorem~\ref{thm:non-extremal}}\label{proof-non ex}
The proof of Theorem~\ref{thm:non-extremal} relies on the absorbing method, a powerful combinatorial technique initiated by R\"{o}dl, Ruci\'{n}ski, and Szemer\'{e}di~\cite{RRS2006}. In the present case, we may reduce \cref{thm:non-extremal} to Lemmas \ref{lem:connectinglemma}-\ref{lem:almostcover} formulated later in this section.

To begin with, we state the following Connecting Lemma. One may observe that Ore-degree condition indeed implies that $G$ is dense with average degree almost $3n/4$. Hence it is expected that two vertices can be linked by sufficiently many short paths.

\begin{lemma}[Connecting Lemma]\label{lem:connectinglemma}
    Let $G$ be an $n$-vertex oriented graph satisfying Ore-degree condition. For every pair of distinct vertices $(u,v)$, if $uv\notin E(G)$, then there is an integer $1\le k\le 3$ such that $G$ contains at least $2^{-24}n^k$ of $(u,v)$-path of length $k+1$.
\end{lemma}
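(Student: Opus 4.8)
The plan is to count, for a fixed non-adjacent pair $(u,v)$, the number of short directed $(u,v)$-paths by a case analysis on how $N^+(u)$ and $N^-(v)$ interact, always leveraging the Ore-degree inequality $\deg^+(u)+\deg^-(v)\ge (3n-3)/4$ together with the in/out minimum degree bound $\delta^0(G)\ge n/8$ from \cref{le:delta0}. Write $A\coloneqq N^+(u)$ and $B\coloneqq N^-(v)$, so $|A|+|B|\ge(3n-3)/4$. The first observation is that if $A\cap B\ne\emptyset$ we already have a path $uwv$ of length $2$; but we need \emph{many} paths, so the real work is to produce $\Omega(n^k)$ of them for a single value of $k\in\{1,2,3\}$ (here ``length $k+1$'' means $k$ internal vertices, $k\in\{1,2,3\}$, i.e.\ paths $uv$ of length $2,3,4$).

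First I would handle the length-$2$ case: if $|A\cap B|\ge 2^{-24}n$, then there are at least $2^{-24}n=2^{-24}n^1$ paths $uwv$ and we are done with $k=1$. So assume $|A\cap B|< 2^{-24}n$, hence $|A|+|B|\le |A\cup B|+|A\cap B|\le n+2^{-24}n$, which is consistent with the Ore bound and tells us both $|A|$ and $|B|$ are reasonably large (at least $n/8$ by $\delta^0$, and in fact $|A|,|B|\ge (3n-3)/4-|B|$ resp., so each is at least roughly $n/8$ and they nearly partition a set of size close to $n$). Next, the length-$3$ case: count pairs $(a,b)$ with $a\in A$, $b\in B$, $a\ne b$ and $ab\in E(G)$, i.e.\ $e(A,B)$ (minus a negligible correction for $a=b$ or for pairs landing on $u,v$). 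Each such edge gives a path $uabv$. If $e(A,B)\ge 2^{-24}n^2$ we get $k=2$ and are done. The crux is therefore the remaining case $e(A,B)< 2^{-24}n^2$: the bipartite-like pair $(A,B)$ is almost empty of $A$–$B$ edges, and I want to conclude that there are then $\Omega(n^3)$ paths of length $4$.

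For the length-$4$ case, the idea is: since $e(A,B)$ is tiny, most pairs $(a,b)\in A\times B$ have $ab\notin E(G)$, so the Ore condition applies to most such pairs, giving $\deg^+(a)+\deg^-(b)\ge (3n-3)/4$ for a $(1-o(1))$-fraction of them. I would then count triples $(a,w,b)$ with $a\in A$, $b\in B$, $w\notin\{u,v\}$, $aw\in E(G)$ and $wb\in E(G)$, i.e.\ $w\in N^+(a)\cap N^-(b)$; each gives a path $uawbv$. For a pair $(a,b)$ with $ab\notin E(G)$ we have $|N^+(a)\cap N^-(b)|\ge \deg^+(a)+\deg^-(b)-n\ge -3/4-o(n)$, which is \emph{not} automatically positive — this is the main obstacle. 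To get around it I would use the structural information gained: since $e(A,B)$ is small, a typical $b\in B$ has few inneighbours in $A$, hence many \emph{non}-neighbours among $A$; but more usefully, I would apply a second-moment / averaging argument combining the Ore bounds over many pairs. Specifically, summing $\deg^+(a)$ over $a\in A$ and $\deg^-(b)$ over $b\in B$ and using that these degree sets overlap substantially with $A\cup B$ (of size $\approx n$), one forces a linear-sized common neighbourhood for a positive fraction of pairs; alternatively, pick a random $a\in A$ and $b\in B$ and show $\mathbb{E}|N^+(a)\cap N^-(b)|=\Omega(n)$ by inclusion–exclusion against $|A\cup B|\le (1+2^{-24})n$, using that $e(A,B)$ small means $N^-(b)$ is mostly disjoint from $A$ yet $N^+(a)$ and $N^-(b)$ together have size $\ge (3n-3)/4$ inside an $n$-set, so they must intersect in $\ge (3n-3)/4 - (n-|N^-(b)\setminus\text{stuff}|)$ vertices. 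Carrying this through yields $\Omega(n)$ choices of $w$ for $\Omega(n^2)$ choices of $(a,b)$, hence $\Omega(n^3)$ paths, giving $k=3$ with the constant $2^{-24}$ absorbing all the lower-order losses (collisions with $u,v$, the $a=b$ or $a=w$ or $w=b$ degeneracies, and the $o(n^2)$ exceptional pairs). The bookkeeping of these constants is routine; the genuine difficulty, as noted, is extracting a \emph{linear} common neighbourhood $|N^+(a)\cap N^-(b)|$ in the third case, where the naive inclusion–exclusion bound from the Ore inequality alone is vacuous and one must exploit the smallness of $e(A,B)$ to place $N^+(a)$ and $N^-(b)$ inside a common set of size only marginally larger than $n$.
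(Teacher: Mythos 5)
Your handling of the $k=1$ and $k=2$ cases is exactly the paper's: if $|N^+(u)\cap N^-(v)|\ge 2^{-24}n$ or $e(N^+(u),N^-(v))\ge 2^{-24}n^2$ you are done, so one may assume both are small. The gap is in the $k=3$ case, and it is in the very step you yourself flag as the crux. You want to show that for $\Omega(n^2)$ non-adjacent pairs $(a,b)\in N^+(u)\times N^-(v)$ the common neighbourhood $N^+(a)\cap N^-(b)$ has linear size, but the Ore condition gives only $\deg^+(a)+\deg^-(b)\ge(3n-3)/4<n$, so inclusion--exclusion in a ground set of size $n$ gives nothing. ``Placing $N^+(a)$ and $N^-(b)$ inside a common set of size marginally larger than $n$'' does not help (they already live in $V(G)$); the smallness of $e(A,B)$ in fact pushes the other way, since it says $N^+(a)$ mostly avoids $B$ and $N^-(b)$ mostly avoids $A$, so the two neighbourhoods are squeezed into $A\cup(V\setminus(A\cup B))$ and $B\cup(V\setminus(A\cup B))$ respectively, whose only common part is $V\setminus(A\cup B)$ of size at most roughly $n/4$. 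For a fixed pair $(a,b)$ no intersection is forced, and the expectation $\mathbb{E}_{a,b}\lvert N^+(a)\cap N^-(b)\rvert=\sum_w \lvert N^-(w)\cap A\rvert\cdot\lvert N^+(w)\cap B\rvert/(|A||B|)$ can be tiny if the sets $\{w:\lvert N^-(w)\cap A\rvert\text{ large}\}$ and $\{w:\lvert N^+(w)\cap B\rvert\text{ large}\}$ happen to be disjoint --- which is exactly what you have not ruled out.

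The paper's proof supplies the missing idea: rather than seeking a large common neighbourhood per pair, it aggregates over middle vertices. It defines $\overline{N}^{++}(u)\coloneqq\{w\notin N^+(u):\lvert N^+(u)\cap N^-(w)\rvert\ge 2^{-8}n\}$ and $\overline{N}^{--}(v)\coloneqq\{w\notin N^-(v):\lvert N^-(v)\cap N^+(w)\rvert\ge 2^{-8}n\}$, sums the Ore inequality over non-adjacent pairs in $N^+(u)\times N^-(v)$, and splits each $\sum_a\deg^+(a)$ according to whether out-edges land inside $N^+(u)$, inside $\overline{N}^{++}(u)$, or elsewhere (symmetrically for $\deg^-(b)$). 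This shows $\lvert\overline{N}^{++}(u)\rvert+\lvert\overline{N}^{--}(v)\rvert$ is large, and a volume argument --- $N^+(u)$, $N^-(v)$, $\overline{N}^{++}(u)$, $\overline{N}^{--}(v)$ cannot be nearly pairwise disjoint inside $V(G)$ --- forces $\overline{N}^{++}(u)\cap\overline{N}^{--}(v)$ to be sizeable. Each $w$ in that intersection then contributes $\ge 2^{-16}n^2$ three-connectors $awb$, giving the required $2^{-24}n^3$ paths. It is precisely this ``define the set of good middle vertices and show it is nonempty by a counting contradiction'' step that your per-pair inclusion--exclusion proposal cannot replace, and it is not a routine bookkeeping refinement of what you wrote.
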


\begin{lemma}[Reservoir Lemma]\label{lem:reservoirlemma}
    Suppose $0 < 1/n \ll \alpha_3 \ll \alpha_2 \ll 1$. Let $G$ be an $n$-vertex oriented graph satisfying Ore-degree condition. For any vertex subset $X\subseteq V(G)$ with $|X| \le 2^{-2}\alpha_2 n$ there exists a set of vertices $R \subseteq V(G)\setminus X$ of size at most $2^{-3}\alpha_3 n$ having the following property:

    For every $S\subseteq R$ with $|S| \le 2^{-11}\alpha_3^2 n$ and for every pair of vertices $(x,y)$ with $x,y\in V(G) \setminus R$ and $xy\notin E(G)$, there exists an $(x,y)$-path with at most $3$ internal vertices from $R\setminus S$.
\end{lemma}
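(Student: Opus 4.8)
The plan is to build the reservoir set $R$ by a random selection argument, using the Connecting Lemma (Lemma~\ref{lem:connectinglemma}) together with the absorber-family machinery of Lemma~\ref{lem:absorberfamily} to guarantee the robust linking property.

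First I would set up the data that Lemma~\ref{lem:absorberfamily} consumes. For each ordered pair $(x,y)$ of distinct vertices with $xy\notin E(G)$, the Connecting Lemma gives an integer $k=k(x,y)\in\{1,2,3\}$ and at least $2^{-24}n^{k}$ directed $(x,y)$-paths with exactly $k$ internal vertices. Grouping the pairs according to the value of $k$, I would like to apply Lemma~\ref{lem:absorberfamily} with $t=k$, $\sigma=2^{-24}$, $U$ a suitable large subset of $V(G)$, $S$ the set of relevant ordered pairs, and $\cL(x,y)$ the family of internal-vertex $k$-tuples of these paths. The subtlety is that different pairs have different values of $k$, so I would run Lemma~\ref{lem:absorberfamily} three times — once for each $t\in\{1,2,3\}$ — and then take $R$ to be the union of the three resulting vertex-disjoint families (as vertex sets), together with $X$ excluded throughout. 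Concretely, before invoking Lemma~\ref{lem:absorberfamily} I first delete $X$ (of size at most $2^{-2}\alpha_2 n$) from the host; since $\alpha_2$ is tiny and $\delta^0(G)\ge n/8$ by Proposition~\ref{le:delta0}, the Connecting Lemma still applies to $G-X$ after a trivial loss in the constant, and $|V(G)\setminus X|\ge n/2$, so the hypothesis $|U|\ge n/2$ of Lemma~\ref{lem:absorberfamily} is met. I choose the relation $0<1/n\ll\alpha_3\ll\alpha_2\ll1$ so that $\alpha_3$ plays the role of $2^{-24}$-sized constants in the conclusion of Lemma~\ref{lem:absorberfamily}; the output family has size at most $2^{-6}\cdot 2^{-24}n$ per value of $t$, so $|R|\le 3\cdot 2^{-30}n\le 2^{-3}\alpha_3 n$ as required.

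Next I would verify the stated linking property. Fix $S\subseteq R$ with $|S|\le 2^{-11}\alpha_3^2 n$ and a pair $(x,y)$ with $x,y\in V(G)\setminus R$ and $xy\notin E(G)$. Let $k=k(x,y)$. By the conclusion of Lemma~\ref{lem:absorberfamily}, the family $\cF$ we extracted for parameter $t=k$ satisfies $|\cL(x,y)\cap\cF|\ge 2^{-10}\cdot 2^{-48}n$, i.e. there are that many pairwise vertex-disjoint $k$-tuples inside $R$ each of which is the internal-vertex sequence of a directed $(x,y)$-path. Since these tuples are pairwise disjoint and $|S|\le 2^{-11}\alpha_3^2 n$ is strictly smaller (here I use $\alpha_3\ll1$ so that $2^{-11}\alpha_3^2<2^{-58}$), at least one such $k$-tuple avoids $S$ entirely. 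The corresponding $(x,y)$-path has $k\le 3$ internal vertices, all lying in $R\setminus S$, which is exactly the conclusion.

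The main obstacle I anticipate is a bookkeeping one rather than a conceptual one: handling the fact that the length parameter $k$ in the Connecting Lemma varies from pair to pair, so that a single application of Lemma~\ref{lem:absorberfamily} does not suffice and one must juggle three families while keeping all disjointness and size bounds compatible. A secondary technical point is making sure that removing $X$ first (and, if one wants the reservoir paths themselves to be short directed paths as suggested in the proof-strategy overview, also orienting the tuples into paths) does not break the degree hypotheses needed for the Connecting Lemma — but this costs only a constant factor absorbed into the hierarchy $\alpha_3\ll\alpha_2$. One must also check that the chosen constants in Lemma~\ref{lem:absorberfamily}'s conclusion, namely $2^{-6}\sigma n$ and $2^{-10}\sigma^2 n$, fit inside the targets $2^{-3}\alpha_3 n$ and $2^{-11}\alpha_3^2 n$ of the Reservoir Lemma; this is automatic once $\alpha_3$ is chosen small enough relative to $2^{-24}$, which is legitimate since $\alpha_3$ appears to the right of $1/n$ in the hierarchy.
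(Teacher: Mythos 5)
Your overall strategy is the same as the paper's: feed the connectors produced by the Connecting Lemma into Lemma~\ref{lem:absorberfamily} once for each length $k\in\{1,2,3\}$, and let $R$ be the union of the vertex sets of the resulting disjoint families. However, your parameter choice does not close the argument. You set $\sigma=2^{-24}$ --- a fixed absolute constant --- when invoking Lemma~\ref{lem:absorberfamily}. This outputs a family with at most $2^{-6}\cdot 2^{-24}n$ tuples per length, hence a reservoir of size on the order of $6\cdot 2^{-30}n$ (each $t$-tuple contributes $t$ vertices, so the total is $\sum_{t=1}^{3}t\cdot 2^{-30}n$, not $3\cdot 2^{-30}n$). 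This is a fixed fraction of $n$, independent of $\alpha_3$. But the Reservoir Lemma demands $|R|\le 2^{-3}\alpha_3 n$, and under the hierarchy $\alpha_3\ll\alpha_2\ll 1$ the constant $\alpha_3$ must be allowed to be arbitrarily small once $\alpha_2$ is fixed. As soon as $\alpha_3$ drops below roughly $6\cdot 2^{-27}$, your $R$ violates the required size bound, and no admissible choice of the hierarchy function rescues this. Your closing remark that the constants ``fit inside the targets \ldots automatic once $\alpha_3$ is chosen small enough relative to $2^{-24}$'' has the inequality backwards: shrinking $\alpha_3$ shrinks the target $2^{-3}\alpha_3 n$, making a fixed-size $R$ fit worse, not better.

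The correct calibration is the one the paper uses: apply Lemma~\ref{lem:absorberfamily} with $\sigma=\alpha_3$, not $\sigma=2^{-24}$. The hypothesis $|\cL(u,v)|\ge\sigma n^t$ is still satisfied because after discarding the connectors that hit $X$ (and then $X$ together with the previously built families), the count is still at least $2^{-24}n^k-k|X|n^{k-1}\ge\alpha_3 n^k$, using $\alpha_3\ll\alpha_2$. With $\sigma=\alpha_3$ the conclusion of Lemma~\ref{lem:absorberfamily} gives $|\cF_t|\le 2^{-6}\alpha_3 n$ and $|\cL_t(u,v)\cap\cF_t|\ge 2^{-10}\alpha_3^2 n$, so $|R|\le(1+2+3)\cdot 2^{-6}\alpha_3 n<2^{-3}\alpha_3 n$ and the robustness count $2^{-10}\alpha_3^2 n$ comfortably exceeds $|S|\le 2^{-11}\alpha_3^2 n$. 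With this correction, the rest of your sketch --- splitting into the three values of $k$, excluding $X$, and using pairwise disjointness of tuples within each $\cF_t$ to find a connector avoiding $S$ --- coincides with the paper's proof.
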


\begin{lemma}[Absorbing Lemma]\label{lem:absorbinglemma}
    Suppose $0 < 1/n \ll \alpha_2 \ll \eta \ll 1$. Let $G$ be an $n$-vertex oriented graph satisfying Ore-degree condition. If $G$ is not $\eta$-extremal, then there exists a directed path $P_{abs}$ with at most $2^{-2}\alpha_2 n$ vertices such that, for every $U\subseteq V(G)\setminus V(P_{abs})$ with $|U| \le 2^{-10}\alpha_2^2 n$, $G[V(P_{abs}) \cup U]$ contains a spanning directed path having the same end-points as $P_{abs}$.
\end{lemma}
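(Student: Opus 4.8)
The plan is to implement the double-step absorption method sketched in Section~\ref{s0}. First I would classify every vertex (and, more generally, every pair $(u,v)$ that could arise as the endpoints of a short path to be absorbed) as either \emph{strongly absorbable} or \emph{weakly absorbable}. A vertex $v$ is strongly absorbable if there are $\Omega(n^2)$ edges $wz\in E(G)$ with $wv,vz\in E(G)$, i.e.\ $v$ has many ``absorbing pairs'' $(w,z)$ where $w\in N^-(v)$, $z\in N^+(v)$ and $wz\in E(G)$; equivalently $e(N^-(v),N^+(v))=\Omega(n^2)$. A vertex is weakly absorbable if instead there are $\Omega(n^2)$ pairs $(w',z')$ with $w'\in N^-(v)$, $z'\in N^+(v)$, and a directed path $w'Pz'$ on a bounded number of vertices through $G$ whose \emph{endpoints} $(w',z')$ are strongly absorbable. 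The first genuine task is a dichotomy lemma: under the Ore-degree condition, if $v$ is not strongly absorbable then it is weakly absorbable (and similarly for pairs). Here the Ore-degree condition enters exactly as in Proposition~\ref{le:delta0} and Lemma~\ref{lm:emptyset}: if $e(N^-(v),N^+(v))$ is small then $N^-(v)$ and $N^+(v)$ overlap in a set $X$ with $e(X)$ small, so by Lemma~\ref{lm:emptyset} $|X|\le n/4+O(\eta)n$, and then the non-extremality of $G$ must be used to rule out the remaining structure or to exhibit the required strongly absorbable pairs through the rest of the graph. This is where I expect the main difficulty to lie: carefully setting up the case analysis so that failure of weak absorbability forces $G$ into one of the configurations of Definition~\ref{ex-defi}, contradicting the hypothesis that $G$ is not $\eta$-extremal.

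Once the dichotomy is in hand, I would build the absorbing gadgets probabilistically using Lemma~\ref{lem:absorberfamily}. For strongly absorbable vertices/pairs, the relevant $t$-tuples are the single edges $wz$ together with the vertex/path they absorb; applying Lemma~\ref{lem:absorberfamily} with $t=2$ (with $S$ ranging over the absorbable objects and $\cL(u,v)$ the absorbing edges for a fixed such object) yields a vertex-disjoint family $\cA_1$ of $O(\alpha_2 n)$ edges such that every strongly absorbable object has $\Omega(\alpha_2^2 n)$ of its absorbing edges in $\cA_1$. For weakly absorbable vertices I would similarly extract a family $\cA_2$ of short directed paths $ww'\!Pz'z$ whose endpoints $(w',z')$ are strongly absorbable, again using Lemma~\ref{lem:absorberfamily} (with a slightly larger $t\le 4$), disjoint from $\cA_1$, and such that every weakly absorbable vertex has many usable gadgets in $\cA_2$. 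I must take $\cA_2$ to be consumed in two stages, so I should reserve a portion of $\cA_1$ specifically to re-absorb the pairs $(w',z')$ that $\cA_2$-absorptions spit out; a standard counting check (each $\cA_2$-gadget needs one private $\cA_1$-edge for its cleanup, and $|\cA_2|$ is far smaller than the guaranteed overlap $|\cL(w',z')\cap\cA_1|$) makes this bookkeeping routine.

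Next I would connect all the gadgets in $\cA\coloneqq\cA_1\cup\cA_2$ into one short directed path $P_{abs}$. The gadgets form $O(\alpha_2 n)$ vertex-disjoint directed subpaths; using the Connecting Lemma (Lemma~\ref{lem:connectinglemma}) greedily, I link the endpoint of one gadget to the start of the next by a directed path with at most three internal vertices, always choosing these internal vertices outside the (small) set of vertices used so far and outside all gadgets — possible since at each step only $O(\alpha_2 n)$ vertices are forbidden while Lemma~\ref{lem:connectinglemma} provides $\Omega(n^k)$ choices. (If some consecutive endpoints already form an edge, no connector is needed; and if $uv\in E(G)$ one can instead reroute through one of the $\Omega(n)$ common neighbours, which exist by Proposition~\ref{le:delta0}.) The result is a single directed path $P_{abs}$ on at most $2^{-2}\alpha_2 n$ vertices, since each of the $O(\alpha_2 n)$ gadgets and connectors contributes a bounded number of vertices; one tunes constants so the bound holds.

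Finally I would verify the absorption property. Given $U\subseteq V(G)\setminus V(P_{abs})$ with $|U|\le 2^{-10}\alpha_2^2 n$, I absorb the vertices of $U$ one at a time. When the current vertex $v\in U$ is strongly absorbable, pick an unused absorbing edge $wz\in\cA_1$ for $v$ (one exists since at most $|U|<2^{-10}\alpha_2^2 n$ of the $\ge 2^{-10}\alpha_2^2 n$ gadgets in $\cL\cap\cA_1$ have been touched) and replace $wz$ by $wvz$. When $v$ is only weakly absorbable, pick an unused gadget $ww'Pz'z\in\cA_2$ for $v$, replace it by $wvz$, which temporarily drops the vertices $\{w',z'\}\cup V(P)$ from the path; then, since $(w',z')$ is strongly absorbable, re-insert the short path $w'Pz'$ by replacing a reserved edge $uv'\in\cA_1$ with $uw'Pz'v'$. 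Each step keeps $P_{abs}$ a directed path with the same endpoints, adds exactly $v$ to its vertex set (and does not permanently lose any vertex), and consumes a bounded number of previously-unused gadgets; the counting guarantees from Lemma~\ref{lem:absorberfamily} ensure we never run out. After processing all of $U$ we obtain a directed path on $V(P_{abs})\cup U$ with the same endpoints as $P_{abs}$, which is exactly the desired spanning path of $G[V(P_{abs})\cup U]$. The one subtlety to be careful about is that every internal vertex used by every gadget and connector must itself be chosen so that it is never needed as an endpoint of something to be absorbed later — handled by fixing, before the probabilistic extraction, that $U$ and the objects to be absorbed live in $V(G)\setminus V(P_{abs})$, while $P_{abs}$ is built last from a reserved pool.
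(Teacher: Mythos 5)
Your proposal reproduces the paper's proof structure for Lemma~\ref{lem:absorbinglemma}: the dichotomy between strongly and weakly absorbable vertices (the paper's Lemma~\ref{lm:absorbing}, whose non-extremal case analysis is the technical core you correctly flag as the hard part), probabilistic extraction of the two gadget families via Lemma~\ref{lem:absorberfamily}, linking via the Connecting Lemma, and the double-step absorption in which a strong absorber is reserved to re-insert the pair $(w',z')$ thrown out by a weak absorption. The only slip is in your opening definition of weak absorbability, where you place the adjacency requirement on $(w',z')$; in fact $v$ must attach at the outer pair $(w,z)$ of the gadget $ww'Pz'z$ (i.e.\ $w\in N^-(v)$ and $z\in N^+(v)$), with $(w',z')$ only required to be strongly absorbable as a pair --- but your later description of the replacement step already has this right, so it reads as a typo rather than a genuine error.
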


\begin{lemma}[Covering Lemma]\label{lem:almostcover}
    Suppose $1/n \ll 1/M \ll \epsilon \ll d \ll \eta\ll 1$. 
    Let $G$ be an $n$-vertex oriented graph satisfying Ore-degree condition, and let $X\subseteq V(G)$ with $|X|\le dn$. Then at least one of the following holds:
    \begin{enumerate}[label={\rm(\roman*)}]
        \item there exists a collection of at most $M/2$ vertex-disjoint paths that covering all but at most $5\epsilon n$ vertices of $V(G) \setminus X$;
        \item $G$ is $\eta$-extremal.
    \end{enumerate}
\end{lemma}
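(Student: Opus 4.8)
The plan is to apply the Diregularity Lemma and Lemma~\ref{lem:reduceR} to $G-X$ (or to $G$ with $X$ absorbed into the exceptional set), obtaining a reduced oriented graph $R$ on $k$ vertices with $M' \le k \le M$ which approximately inherits the Ore-degree condition, namely $\deg^+_R(V_i) + \deg^-_R(V_j) \ge (3/4 - 6\epsilon - 2d - d)|R| \ge (3/4 - \eta')|R|$ whenever $V_iV_j \notin E(R)$, for a suitable $\eta'$ with $d \ll \eta' \ll \eta$; the extra loss of roughly $d n$ in the degree (and the $dn$ absorbed by $X$) is harmless since $|X| \le dn$. The key reduction is then: either $R$ contains a $1$-factor (a spanning union of vertex-disjoint cycles), or $R$ itself — and hence $G$ — is $\eta$-extremal. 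Given a $1$-factor $F$ of $R$, each cycle $C$ of $F$ can be turned into a long directed path covering almost all vertices of the clusters it meets, using Proposition~\ref{prop:superregular} to pass to super-regular restricted clusters $V_i'$ (dropping only $O(\epsilon)|V_i|$ vertices per cluster) and then the Blow-up Lemma (Lemma~\ref{lem:blowup}) to embed a near-spanning path of the blow-up of $C$. Since $F$ has at most $k \le M$ cycles, we produce at most $M$ paths — in fact one can merge consecutive cycles or bound the count by $M/2$ by noting each cycle of $F$ has length $\ge 3$ or by a standard pairing argument — covering all of $V(G)\setminus X$ except the exceptional set $V_0$ (size $\le \epsilon n$) plus the $O(D\epsilon)$-fraction discarded from each cluster and the leftover endpoints, for a total of at most $5\epsilon n$ uncovered vertices. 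This yields conclusion (i).

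The heart of the argument is the dichotomy: \emph{if $R$ has no $1$-factor, then $G$ is $\eta$-extremal}. By a standard consequence of the Gallai–Edmonds / Ore-type machinery for $1$-factors in digraphs — concretely, $R$ has a $1$-factor if and only if it has no "blocking" set, i.e. one shows that failure of a $1$-factor forces a vertex partition of $V(R)$ witnessing a deficiency — we extract from the absence of a $1$-factor a partition of $V(R)$ into parts playing the roles of $A,B,C,D$ as in Definition~\ref{ex-defi}. More precisely, I would argue as follows: a digraph has a $1$-factor iff the bipartite graph $B(R)$ on $V(R) \times V(R)$ with $ij$ joined when $ij \in E(R)$ has a perfect matching; by König/Hall, no perfect matching yields sets $\mathcal{U} \subseteq V_{\text{out}}$, $\mathcal{W} \subseteq V_{\text{in}}$ with $|N^+_R(\mathcal{U})| < |\mathcal{U}|$-type deficiency. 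Translating back, under the near-$3/4$ Ore condition the only way this can happen is when a large set $B^R$ of clusters sends essentially all its out-edges into a strictly smaller set $D^R$ (mirroring the extremal construction, where every $B$–$B$ path must pass through $D$ and $|B| > |D|$), with the remaining clusters splitting into the tournament-like parts $A^R, C^R$ and $e_R(A^R, C^R)$ small. Feeding the cluster sizes back (multiply by $m$) and using $e_R(S,T) > d|R|^2 \Rightarrow e_G(S^*, T^*) \ge 3dn^2$ contrapositively together with the regularity of the pairs to recover the edge counts in Definition~\ref{ex-defi}, one concludes $G$ is $\eta$-extremal, giving conclusion (ii).

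I expect the main obstacle to be making the dichotomy quantitatively tight: extracting from "$R$ has no $1$-factor" a clean partition of $V(R)$ of the right sizes (the $1/4$–$1/4$–$1/2$ split) and verifying \emph{all} the edge conditions (a)–(j) of Definition~\ref{ex-defi} up to $O(\eta)n^2$ error, rather than merely some qualitative extremal-like structure. This requires carefully using the approximate Ore condition on $R$ to pin down the sizes of the deficiency sets, ruling out intermediate configurations, and then lifting cluster-level edge densities back to $G$ via $\epsilon$-regularity and Lemma~\ref{lem:reduceR}(1)–(2); the bookkeeping of error terms ($\epsilon$ vs $d$ vs $\eta$ vs $\eta'$) must be done so that the extremal parameters come out within the tolerated $O(\eta)$. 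A secondary technical point is ensuring the number of covering paths is genuinely at most $M/2$ and not just $M$ — handled by absorbing short cycles of $F$ pairwise or by slightly adjusting $R$ before taking the $1$-factor so every component is long — but this is routine compared to the structural dichotomy.
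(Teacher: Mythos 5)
Your proposal follows essentially the same route as the paper: apply the Diregularity Lemma and Lemma~\ref{lem:reduceR} to $G-X$, inherit an approximate Ore condition on the reduced oriented graph $R$, and dichotomize on whether $R$ has a $1$-factor (equivalently, whether Hall's condition holds for the bipartite double cover), with the $1$-factor case handled by \cref{prop:superregular} and the Blow-up Lemma (and the $\le M/2$ path count following, as you note, from each oriented cycle having length $\ge 3$), and the no-$1$-factor case yielding a violating set $S$ with $|N^+_R(S)| < |S|$ from which an $\eta$-extremal partition is extracted. What you flag as the "main obstacle" — pinning down the sizes of the deficiency sets and verifying the edge conditions of Definition~\ref{ex-defi} — is precisely what the paper carries out in its Claims on the partition $A_R = S \cap N^+_R(S)$, $B_R = N^+_R(S)\setminus S$, $C_R = [k]\setminus(S\cup N^+_R(S))$, $D_R = S\setminus N^+_R(S)$, so your outline is the right one but stops short of the paper's quantitative verification.
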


We conclude this section by proving our main results for the non-extremal case, assuming the four lemmas stated above. 
While the proof below does not directly invoke \cref{lem:connectinglemma}, this lemma will play an essential role in establishing both \cref{lem:absorbinglemma} and \cref{lem:reservoirlemma} in the subsequent section.

\begin{proof}[Proof of \cref{thm:non-extremal}]
    Let $M \in \mathbb{N}$ and introduce additional constants such that
    \[1/n \ll 1/M \ll \epsilon \ll \alpha_3 \ll \alpha_2 \ll \alpha_1 \ll d \ll \eta \ll 1.\]
    Let us assume that $G$ is not $\eta$-extremal. We first apply \cref{lem:absorbinglemma} with parameter $\alpha_2,\eta$ to get an absorbing path $P_{abs}$ with $|V(P_{abs})| \le 2^{-2}\alpha_2 n$ such that, for every $U\subseteq V(G)\setminus V(P_{abs})$ with $|U| \le 2^{-10}\alpha_2^2 n$, $G[V(P_{abs}) \cup U]$ contains a spanning directed path having the same end-points as $P_{abs}$. Next we appeal with $\alpha_3,\alpha_2$ to \cref{lem:reservoirlemma}, thus getting a reservoir set $R\subseteq V(G)\setminus V(P_{abs})$ with $|R| \le 2^{-3}\alpha_3 n$. Recall that we assume that $G$ is not $\eta$-extremal. Applying \cref{lem:almostcover} with $X \coloneqq V(P_{abs})\cup R$ we can get a collection of vertex-disjoint paths, say, $\cP \coloneqq \{P_1,P_2,\ldots,P_k\}$ with $k\le M/2$, and a vertex subset $Y \coloneqq V(G) \setminus (X \cup V(\cP))$ with $|Y| \le 5\epsilon n$. For convenience let us write $P_0 \coloneqq P_{abs}$ and for every $0\le i\le k$, we say $P_i$ is a $(u_i,v_i)$-path.

    We then show how to find a Hamilton cycle. To begin with, for the paths $P_0$ and $P_1$, either we have $v_0u_1\in E(G)$, or by \cref{lem:reservoirlemma} there exists an $(v_0,u_1)$-path with at most $3$ internal vertices from $R$. We claim that one may greedily apply \cref{lem:reservoirlemma} to find a $(v_i,u_{i+1})$-path for every $0\le i\le k$ (write $u_0=u_{k+1}$), since the total number of used vertices in the reservoir set $R$ is at most $3k \le 2^{-11}\alpha_3^2 n$ by our setting of constants $1/n \ll 1/M \ll \alpha_3$.

    So far we have gotten a cycle $C_0$ that contains $P_{abs}$, $\cP$, and a fraction of vertices in $R$. Let $U\coloneqq V(G) \setminus V(C_0)$. Since $|U| \le 2^{-3}\alpha_3 n + 5\epsilon n \le 2^{-10} \alpha_2^2 n$, applying \cref{lem:absorbinglemma} there exists a directed path $P$ spanning $V(P_{abs}) \cup U$ having the same end-points as $P_{abs}$. Therefore, we get a Hamilton cycle if we replace $P_{abs}$ by $P$ in $C_0$, as desired.
\end{proof}

\section{Absorption Method}\label{sec:abs}
In this section we devote to prove Lemmas \ref{lem:connectinglemma}-\ref{lem:absorbinglemma}, the key absorbing lemmas of the proof for non-extremal case.

\subsection{Proof of Connecting Lemma}
We start with the following definition. 

\begin{definition}
    For a given integer $1\le k\le 3$, a \emph{$k$-connector} of a pair of distinct vertices $(u,v)$ is a $(w_1,w_2)$-path $P$ containing exactly $k$ vertices such that $uw_1,w_2v\in E(G)$. Note that a $1$-connector refers to a vertex $w$ such that $uw,wv\in E(G)$.
\end{definition}

\begin{proof}[Proof of \cref{lem:connectinglemma}]
    Assume for the sake of contradiction that there are two vertices $u,v\in V(G)$ with $uv\notin E(G)$, such that for every integer $1\le k\le 3$, $G$ contains less than $2^{-24}n^k$ of $(u,v)$-paths of length $k+1$. For convenience, let $d^+ \coloneqq \deg^+(u)$ and $d^- \coloneqq \deg^-(v)$. Since $uv\notin E(G)$, Ore-degree condition implies that $d^+ + d^- \ge (3n-3)/4$. 
    We first analyze the neighborhoods of $u$ and $v$.
    By our assumption, the number of $1$-connectors is less than $2^{-24}n$, so $|N^+(u) \cap N^-(v)| \le 2^{-24}n$. 
    Similarly, since there are less than $2^{-24}n^2$ $2$-connectors, we have $e(N^+(u),N^-(v)) \le 2^{-24}n^2$.

    Now, define
    \begin{align*}
        \overline{N}^{++}(u) &\coloneqq \{w\notin N^+(u): |N^+(u) \cap N^-(w)| \ge 2^{-8}n\},\\
        \overline{N}^{--}(v) &\coloneqq \{w\notin N^-(v): |N^-(v) \cap N^+(w)| \ge 2^{-8}n\}.
    \end{align*}
    
    Let us consider all pairs of vertices $(x,y)$ with $x\in N^+(u)$ and $y\in N^-(v)$ such that $xy\notin E(G)$. By Ore-degree condition, we have
    \[\frac{3n-3}{4} \left(d^+ \cdot (d^--1) - 2^{-24}n^2\right) \le \sum_{
    \substack{x\in N^+(u),y\in N^-(v)\\xy\notin E(G)}} \deg^+(x) + \deg^-(y).\]
    For the right side, we may estimate that
    \begin{align*}
        \sum_{x\in N^+(u)} \deg^+(x) &\le e(G[N^+(u)]) + e(N^+(u),\overline{N}^{++}(u)) + 2^{-8}n^2\\
        &\le \binom{d^+}{2} + d^+ \cdot |\overline{N}^{++}(u)| + 2^{-8}n^2.
    \end{align*}
    A similar inequality holds for $N^-(v)$ as
    \[\sum_{y\in N^-(v)} \deg^-(y) \le \binom{d^-}{2} + d^- \cdot |\overline{N}^{--}(v)| + 2^{-8}n^2.\]
    Combining above three inequalities we can get
    \begin{align*}
        \frac{3n}{4} \left(d^+ \cdot d^- - 2^{-23}n^2\right) \le &d^+ \cdot \binom{d^-}{2} + d^- \cdot \binom{d^+}{2}
        + d^+ \cdot d^- \cdot (|\overline{N}^{++}(u)| + |\overline{N}^{--}(v)|)\\
        &+ (d^+ + d^-) \cdot 2^{-8}n^2.
    \end{align*}
    Dividing both sides by $d^+ \cdot d^-$ we have
    \[\frac{3n}{4} \left(1 - \frac{2^{-23}n^2}{d^+ \cdot d^-}\right) \le \frac{d^+ + d^-}{2} + (|\overline{N}^{++}(u)| + |\overline{N}^{--}(v)|) + \frac{d^+ + d^-}{d^+ \cdot d^-} 2^{-8}n^2.\]
    \cref{le:delta0}, the semi-degree condition, implies that $d^+ \cdot d^- \ge d^+ \cdot (\frac{3n-3}{4} - d^+) > 2^{-4}n^2$. Thus,
    \[|\overline{N}^{++}(u)| + |\overline{N}^{--}(v)| > \frac{11n}{16} - \frac{9(d^+ + d^-)}{16}.\]
    Additionally, we have $|\overline{N}^{++}(u) \cap N^-(v)| \le 2^{-16}n$; otherwise we have $e(N^+(u),N^-(v)) > 2^{-24}n^2$, which contradicts our assumption. Similarly, $|\overline{N}^{--}(v) \cap N^+(u)| \le 2^{-16}n$. Furthermore, the number of $3$-connectors implies that $|\overline{N}^{++}(u) \cap \overline{N}^{--}(v)| \le 2^{-8}n$. Hence, we have
    \[n \ge |N^+(u)| + |N^-(v)| + |\overline{N}^{++}(u)| + |\overline{N}^{--}(v)| - 2^{-24}n - 2^{-15}n - 2^{-8}n > \frac{65n}{64} - 2^{-7}n > n,\]
    where we use that $|N^+(u) \cap N^-(v)| \le 2^{-24}n$, $|\overline{N}^{++}(u) \cap N^-(v)| + |\overline{N}^{--}(v) \cap N^+(u)| \le 2^{-15} n$, and $|\overline{N}^{++}(u) \cap \overline{N}^{--}(v)| \le 2^{-8}n$. This yields a contradiction.
\end{proof}

\subsection{Proof of Reservoir Lemma}
We then prove~\cref{lem:reservoirlemma} by using Lemmas~\ref{lem:absorberfamily},~\ref{lem:connectinglemma} directly.

\begin{proof}[Proof of \cref{lem:reservoirlemma}]
    Suppose $0 < 1/n \ll \alpha_3 \ll \alpha_2 \ll 1$, and let $G$ be an oriented graph satisfying Ore-degree condition. By \cref{lem:connectinglemma}, for every pair of vertices $(u,v)$ with $uv\notin E(G)$, there exists an integer $1\le k\le 3$ such that $(u,v)$ admits at least $2^{-24}n^k$ of $k$-connectors. Let $S_1$ be the set of pairs of vertices in $G$ having at least $2^{-24}n$ of $1$-connectors. 
    Fix a subset $X\subseteq V(G)$ with $|X|\le 2^{-2}\alpha_2n$. For every $(u,v)\in S_1$, let $\cL_1(u,v)$ be the set of $1$-connectors of $(u,v)$ which do not intersect with $X$. Then
    \[|\cL_1(u,v)| \ge 2^{-24}n - |X| \ge \alpha_3 n.\]
    Applying \cref{lem:absorberfamily} with $\sigma = \alpha_3$ and $U = V(G) \setminus X$, we obtain a family $\cF_1$ of $1$-connectors such that
    \begin{align}
        \label{connectorcondition}
        |\cF_1| \le 2^{-6} \alpha_3 n,~~~~|\cL_1(u,v) \cap \cF_1| \ge 2^{-10} \alpha_3^2 n \text{ for all } (u,v)\in S_1.
    \end{align}

    Set $X_1 \coloneqq X \cup V(\cF_1)$. Then $|X_1|\le 2^{-1}\alpha_2 n$ by the choice of $\alpha_3 \ll \alpha_2$. By \cref{lem:absorberfamily}, one can find a family $\cF_2$ consisting of vertex-disjoint $2$-connectors from $V(G)\setminus X_1$. By the same argument, we have $\cF_3$ consisting of vertex-disjoint $3$-connectors from $V(G)\setminus (X_1\cup V(\cF_2))$. Note that $\cF_2$ and $\cF_3$ both satisfy the property analogous to (\ref{connectorcondition}) with respect to $\cL_2,S_2$ and $\cL_3,S_3$, respectively.

    Let $R \coloneqq \bigcup_i V(\cF_i)$. It remains to show that $R$ satisfies the desired property. In fact, for any $S\subseteq R$ with $|S| \le 2^{-11} \alpha_3^2 n$ and a pair of vertices $(x,y)$ with $xy\notin E(G)$, for some $1\le k\le 3$ we have $|\cL_k(x,y) \cap \cF_k| \ge 2^{-10}\alpha_3^2 n$, which follows from \cref{lem:connectinglemma}. Thus, we can find a $k$-connector in $\cL_k(x,y)$ that does not intersect with $S$, and thus get an $(x,y)$-path with at most $3$ internal vertices from $R\setminus S$.
\end{proof}

\subsection{Proof of Absorbing Lemma}\label{subsec:5.3}
The present section is dedicated to the proof of \cref{lem:absorbinglemma}, in which we state that the absorber we shall use will simply be a path. A simple way to absorb a vertex or a directed path is to insert them into two consecutive vertices. Hence we have the following definition.

\begin{definition}
    Let $G$ be an $n$-vertex oriented graph. A \emph{strong absorber} of a pair of vertices $(u,v)$ (not necessary to be distinct) is a set of two vertices $w,z\in V(G)$ such that $wz,wu,vz\in E(G)$.
    We say that $(u,v)$ is \emph{$\alpha_1$-strongly absorbable} if it has at least $\alpha_1 n^2$ strong absorbers. Specially, we refer to these definitions to a single vertex when $u,v$ are identified.
\end{definition}

However, there may exist some vertices that are not strongly absorbable even if $G$ is not $\eta$-extremal. Let $v$, for instance, be such a vertex. Our strategy is to insert $v$ into two non-consecutive vertices on the absorbing path, while the directed path between these two vertices is removed. If this path can be strongly absorbed, then we succeed in absorbing the vertex $v$. So we introduce the following definition.

\begin{definition}
    Let $G$ be an $n$-vertex oriented graph. An \emph{$\alpha_1$-weak absorber} of a pair of vertices $(u,v)$ (not necessary to be distinct) is a set of four vertices $w,w',z',z\in E(G)$ such that $ww',wu,z'z,vz\in E(G)$ and $(w',z')$ is $\alpha_1$-strongly absorbable.
    We say that $(u,v)$ is \emph{$(\alpha_2,\alpha_1)$-weakly absorbable} if it has at least $\alpha_2 n^4$ of $\alpha_1$-weak absorbers. Specially, we refer to these definitions to a single vertex when $u,v$ are identified.
\end{definition}

The following proposition declares that there always exist sufficiently many strongly absorbable pairs in any dense subgraph.

\begin{proposition}\label{cl:densestrong}
    Let $1/n \ll \alpha \ll \beta < 1$. Let $G$ be an $n$-vertex oriented graph, and let $X,Y$ be two vertex subsets of $V(G)$ (not necessary to be disjoint). If $e(Y,X) = \beta n^2$, then there are at least $(\beta^4/32 - \alpha) n^2$ pairs of vertices $(x,y) \in (X,Y)$ that are $\alpha$-strongly absorbable.
\end{proposition}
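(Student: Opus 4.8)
The plan is to count, for each pair $(x,y) \in X \times Y$ with $xy \notin E(G)$, the number of its strong absorbers, and to show that on average this quantity is large enough. Recall that a strong absorber of $(x,y)$ is a pair $(w,z)$ with $wz, wx, yz \in E(G)$; so the number of strong absorbers of $(x,y)$ equals the number of edges $wz$ with $w \in N^-(x)$ and $z \in N^+(y)$, i.e. $e(N^-(x), N^+(y))$. Thus $(x,y)$ is $\alpha$-strongly absorbable as soon as $e(N^-(x), N^+(y)) \ge \alpha n^2$. The strategy is therefore to lower-bound $\sum_{x \in X, y \in Y} e(N^-(x), N^+(y))$ and then argue that not too many terms in this sum can be small.

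First I would rewrite the sum by switching the order of summation:
\[
\sum_{x \in X}\sum_{y \in Y} e(N^-(x), N^+(y)) = \sum_{w, z \in V(G):\, wz \in E(G)} |N^+(w) \cap X| \cdot |N^-(z) \cap Y|.
\]
Here I used that $w \in N^-(x)$ iff $x \in N^+(w)$, and $z \in N^+(y)$ iff $y \in N^-(z)$. Now I want to bound this from below in terms of $e(Y,X) = \beta n^2$. The key observation is that $e(Y,X) = \sum_{w \in V(G)} |N^-(w) \cap Y|\cdot \mathbbm{1}[\cdots]$ — more precisely, $e(Y,X) = \sum_{z} |N^-(z) \cap Y|$ summed appropriately, and one wants a Cauchy–Schwarz / convexity step. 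A clean route: double counting gives $e(Y,X) = \sum_{w \in X} |N^-(w)\cap Y| $ is not quite what we need; instead observe that each edge $yx$ with $y \in Y$, $x \in X$ contributes, and we want to relate $\sum_{wz \in E(G)} |N^+(w)\cap X||N^-(z)\cap Y|$ to the number of ``cherries'' $y \to z$, $w \to x$ joined by an edge $wz$. I would apply Cauchy–Schwarz (or the Kruskal–Katona / convexity-type estimate) twice: once to pass from $e(Y,X)=\beta n^2$ to the conclusion that for many pairs $(w,z)$ with $wz\in E(G)$ the product $|N^+(w)\cap X|\,|N^-(z)\cap Y|$ is at least of order $\beta^2 n^2$, and then a second application (summing over such $(w,z)$) to get the $\beta^4$ scaling. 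The bookkeeping should yield $\sum_{x\in X, y\in Y} e(N^-(x), N^+(y)) \ge (\beta^4/16) n^4$ or thereabouts, with the loss absorbed into constants.

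Finally, to convert the average bound into a count of $\alpha$-strongly absorbable pairs: there are at most $n^2$ pairs $(x,y)\in X\times Y$ in total, each contributing at most $n^2$ to the sum; so if fewer than $(\beta^4/32 - \alpha)n^2$ pairs were $\alpha$-strongly absorbable, the total sum would be at most $(\beta^4/32-\alpha)n^2 \cdot n^2 + n^2 \cdot \alpha n^2 = (\beta^4/32)n^4$, contradicting the lower bound $\ge (\beta^4/32)n^4$ once the constants are chosen as above (and using $1/n \ll \alpha \ll \beta$ to swallow lower-order terms such as the $xy \in E(G)$ pairs that must be excluded, which number at most $n$ for fixed $x$ and hence $\le n^2$ in total — negligible after dividing appropriately). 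The main obstacle I anticipate is the double Cauchy–Schwarz step: one must be careful that the sets $N^+(w)\cap X$ and $N^-(z)\cap Y$ are correlated with the edge $wz$ in the right way, so that convexity is applied along the correct variable; getting the exact power $\beta^4$ (rather than a worse power) requires applying the inequality to the bipartite-like structure $(w,z)$ restricted to edges, not to all pairs. A secondary nuisance is handling the pairs $(x,y)$ with $xy \in E(G)$ (which do not satisfy Ore-type slack and for which ``strong absorber'' is still defined but one cannot invoke the degree condition) — but since the definition of strong absorber does not actually require $xy\notin E(G)$, these pairs only help, and I can simply restrict to non-edges when I need the degree condition and note the excluded set is lower order.
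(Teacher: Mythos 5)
Your overall plan matches the paper's: count absorber--pair incidences (equivalently, anti-directed paths of length~3) and then convert the total into a lower bound on the number of $\alpha$-strongly absorbable pairs. Your identification that the number of strong absorbers of $(x,y)$ is $e(N^-(x),N^+(y))$, and the rewrite
\[
\sum_{x\in X}\sum_{y\in Y} e(N^-(x), N^+(y)) = \sum_{wz \in E(G)} |N^+(w)\cap X|\cdot|N^-(z)\cap Y|,
\]
is correct, and the final pigeonhole step (if $m$ pairs are absorbable, the total is at most $mn^2 + \alpha n^4$) is exactly the paper's closing argument.

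However, the key step is precisely the one you leave as a sketch: lower-bounding the anti-path count by $\Omega(\beta^4 n^4)$. You propose a ``double Cauchy--Schwarz'' but do not carry it out, and as you yourself anticipate, this is where the argument lives or dies. The difficulty is real: the constraint $e(Y,X)=\beta n^2$ only controls the bipartite graph $H\coloneqq G[Y,X]$, so one must restrict to pairs $(w,z)$ with $w\in Y$, $z\in X$, $wz\in E(H)$ (the paper does this from the start; your unrestricted sum is at least as large but offers no extra leverage). Even after restricting, lower-bounding $\sum_{uv\in E(H)}\deg^+_H(u)\deg^-_H(v)$ by a pure Cauchy--Schwarz/convexity argument is delicate when degrees are skewed; what you would really need is a Blakley--Roy / Sidorenko-type path-counting inequality, which is a genuinely different and less elementary tool. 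The paper instead uses a standard trick that you should add: since $H$ has average degree at least $\beta n$, iteratively deleting low-degree vertices yields nonempty $Y'\subseteq Y$, $X'\subseteq X$ with $\delta(H[Y',X'])\ge \beta n/2$. Then a greedy count -- choose $u\in Y'$, then $x,v\in N^+_{X'}(u)$ distinct, then $y\in N^-_{Y'}(v)\setminus\{u\}$ -- gives at least $\bigl(\beta n/2\bigr)^2\bigl(\beta n/2 -1\bigr)\bigl(\beta n/2 -2\bigr)\ge \beta^4 n^4/32$ anti-paths, which is exactly the bound you need. This is the missing ingredient; with it your outline closes cleanly.

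One smaller remark: your ``secondary nuisance'' about pairs with $xy\in E(G)$ is a non-issue. The definition of $\alpha$-strongly absorbable never requires $xy\notin E(G)$, and the proposition counts all pairs in $X\times Y$, so there is nothing to exclude.
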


\begin{proof}
    Consider the anti-directed paths of length $3$ in the bipartite graph $G[Y,X]$, that is, $xuvy$ with $x,v \in X$, $u,y \in Y$ and $ux,uv,yv \in E(G)$. Let $k$ be the number of such paths. Since $G[Y,X]$ has average degree at least $\beta n$, there exist $Y'\subseteq Y$ and $X'\subseteq X$ such that $G[Y',X']$ has minimum degree at least $\beta n/2$. Hence we have
    \[k \ge \frac{\beta n}{2} \cdot \frac{\beta n}{2} \cdot \left(\frac{\beta n}{2} - 1\right)\cdot \left(\frac{\beta n}{2} - 2\right) \ge \frac{\beta^4 n^4}{32}.\]
    
    Let $m$ be the number of $\alpha$-strongly absorbable pairs of $(X,Y)$. The number of anti-directed paths whose endpoints are some strongly absorbable pair is at most $m\cdot n^2$. While the number from those that are not strongly absorbable is at most $n^2\cdot \alpha n^2$. Hence we have
    \[k \le m \cdot n^2 + \alpha n^4.\]
    Hence the number of strongly absorbable pairs is at least $(\beta^4/32 - \alpha) n^2$.
\end{proof}

The next lemma shows that if $G$ is not $\eta$-extremal, then every vertex of $G$ is either strongly or weakly absorbable.
\begin{lemma}\label{lm:absorbing}
    Let $1/n \ll \alpha_2 \ll \alpha_1 \ll  \eta\ll 1$. Let $G$ be an $n$-vertex oriented graph satisfying Ore-degree condition. If $G$ is not $\eta$-extremal, then every vertex $v\in V(G)$ is either $\alpha_1$-strongly absorbable or $(\alpha_2,\alpha_1)$-weakly absorbable.
\end{lemma}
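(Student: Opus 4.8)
The plan is to fix a vertex $v$ that is \emph{not} $\alpha_1$-strongly absorbable and deduce that it must be $(\alpha_2,\alpha_1)$-weakly absorbable. First I would set up the relevant neighbourhood sets: write $X := N^-(v)$ and $Y := N^+(v)$ (so a strong absorber of $v$ is a pair $(x,y)$ with $x \in X$, $y \in Y$ and $yx \in E(G)$, i.e. exactly an edge of $G[Y,X]$). Since $v$ is not $\alpha_1$-strongly absorbable, $e(Y,X) < \alpha_1 n^2$, so $G[Y,X]$ is sparse. By Proposition~\ref{le:delta0} we have $|X|,|Y| \ge n/8$, and the Ore-degree condition gives $|X| + |Y| \ge \deg^-(v) + \deg^+(v)$ is large once we also account for the (few) non-edges incident to $v$; more precisely, applying the Ore condition across non-edges from $Y$ to $X$ and using $e(Y,X) < \alpha_1 n^2$ forces $|X| + |Y|$ to be close to $3n/4$ and, crucially, forces most of the outdegree mass of vertices in $Y$ and most of the indegree mass of vertices in $X$ to land \emph{outside} $Y$ and $X$ respectively. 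This is the same averaging computation as in the proof of Lemma~\ref{lem:connectinglemma}, and I would reuse it almost verbatim.

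Next I would locate a large dense ``transit'' region. The key point is that a weak absorber $(w,w',z',z)$ of $v$ consists of $wv \in E(G)$ with $w \in X$ — wait, let me reorganize: $ww', wu, z'z, vz \in E(G)$ with $u=v$, so $wv \in E(G)$ means $w \in N^-(v) = X$, and $vz \in E(G)$ means $z \in N^+(v) = Y$; moreover $ww' \in E(G)$, $z'z \in E(G)$, and $(w',z')$ is $\alpha_1$-strongly absorbable. So to produce $\ge \alpha_2 n^4$ weak absorbers I need: many choices of $w \in X$, for each many out-neighbours $w'$ of $w$, many in-neighbours $z'$ of some $z \in Y$, subject to $(w',z')$ being strongly absorbable, and many such $z$. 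Since by the averaging step almost all of $e(Y,V(G))$ goes from $Y$ to $Z := V(G)\setminus(X\cup Y\cup\{v\})$ and almost all of $e(V(G),X)$ comes from $Z$ to $X$, the bipartite graphs $G[X, Z]$ (edges from... here I want $N^+(w)$ for $w \in X$) need care. Let me instead count: set $W := N^+(X) \setminus (\text{small set})$ — the out-neighbourhoods of vertices of $X$ — and $Z' := N^-(Y)$, the in-neighbourhoods of vertices of $Y$. The plan is to show $G$ restricted to a suitable pair of large sets capturing $\bigcup_{w\in X}N^+(w)$ and $\bigcup_{z\in Y}N^-(z)$ is dense in the $Z'$–$W$ direction, apply Proposition~\ref{cl:densestrong} to extract $\Omega(n^2)$ strongly absorbable pairs $(w',z')$ lying in (out-neighbourhood of $X$) $\times$ (in-neighbourhood of $Y$), and then for each such pair count back the number of $(w,z) \in X \times Y$ completing it to a weak absorber — each strongly absorbable pair $(w',z')$ extends in at least $(\text{a constant})\cdot n^2$ ways because $w'$ has $\Omega(n)$ in-neighbours in $X$ and $z'$ has $\Omega(n)$ out-neighbours in $Y$, using the degree bounds. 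This yields $\ge \alpha_2 n^4$ weak absorbers after choosing $\alpha_2 \ll \alpha_1$.

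The main obstacle is the bookkeeping that forces the ``mass escapes to $Z$'' statement in a usable quantitative form, and then verifying that the relevant bipartite densities stay bounded below by an absolute constant (not merely by something tending to $0$). Concretely: I must rule out the degenerate possibility that, although $e(Y,X)$ is small, the out-neighbourhoods $N^+(w)$ for $w \in X$ and the in-neighbourhoods $N^-(z)$ for $z \in Y$ fail to overlap in a dense bipartite way — and this is exactly where the hypothesis that $G$ is \emph{not} $\eta$-extremal enters. If this density were to fail, one would be able to read off a partition of $V(G)$ into parts behaving like $A, B, C, D$ of the extremal configuration (with $v$ playing the role of a vertex of, say, $B$, its in-neighbourhood $X$ behaving like $A\cup D$ and out-neighbourhood $Y$ like $C$ or vice versa), contradicting non-extremality; I would carry this out by defining the candidate sets $A,B,C,D$ explicitly from $X$, $Y$, $Z$ and the edges between them, checking each bullet in Definition~\ref{ex-defi} against the degree inequalities already derived, and using Lemma~\ref{lm:emptyset} to control the sizes. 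So the proof splits into: (i) the averaging estimates (routine, modeled on Lemma~\ref{lem:connectinglemma}); (ii) a dichotomy — either the transit region is dense, giving weak absorbers via Proposition~\ref{cl:densestrong}, or (iii) $G$ is $\eta$-extremal, contradiction. Step (ii)/(iii) is the heart of the argument.
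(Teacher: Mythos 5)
Your high-level strategy is the right one and does match the paper's: assume $v$ is neither strongly nor weakly absorbable, use the non-existence of many strong absorbers to say $e(N^-(v),N^+(v))$ is small, invoke Proposition~\ref{cl:densestrong} in the contrapositive to bound the density of a ``transit'' region, and then argue that the resulting structure forces $G$ to be $\eta$-extremal. Two points of detail are off, though. First, a direction slip: with $X=N^-(v)$, $Y=N^+(v)$, a strong absorber of $v$ is $\{w,z\}$ with $wv,vz,wz\in E(G)$, so $w\in X$, $z\in Y$, and $wz$ is an edge \emph{from} $X$ \emph{to} $Y$; the sparsity you derive is $e(X,Y)<\alpha_1 n^2$, not $e(Y,X)$. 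Second, your candidate for the extremal partition (``$v$ in $B$, $X$ like $A\cup D$, $Y$ like $C$'') does not match what the structure actually forces; in the paper's argument $v$ lands in $D$ in the main case, and the roles of the sub-pieces of $X$, $Y$, and the ``rest'' $R$ are distributed quite differently across $A,B,C,D$.

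The more serious gap is that the plan glosses over the structural refinement that makes the counting and the extremality verification actually go through. The paper does not work with $X$, $Y$, and their raw neighbourhood unions; it introduces a new scale $\gamma$ with $\alpha_1\ll\gamma\ll\eta$ and splits $A_0:=N^+(v)$, $C_0:=N^-(v)$ and $R:=V(G)\setminus(A_0\cup C_0\cup\{v\})$ into high-degree and low-degree parts ($A_1,A_2,C_1,C_2,R_A,R_C,R_{AC}$, etc.) according to whether a vertex has $\ge \gamma n$ out-neighbours in $A_0$ (resp.\ in-neighbours in $C_0$). The ``not weakly absorbable'' hypothesis then translates into $e(A_1\cup R_A,\,C_1\cup R_C)\le \gamma n^2/2$ via Proposition~\ref{cl:densestrong}, and the rest of the proof is a case analysis on whether $|A_1|$ and $|C_1|$ are $\ge\gamma^{1/3}n$ or not, combined with roughly ten auxiliary inequalities obtained by applying the Ore condition across many specific non-edge classes (e.g.\ $C_1\times A_1$, $C_2\times A_2$, $\{v\}\times R'_A$, $A_1\times\{v\}$, \ldots). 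Crucially, the case where both $|A_1|,|C_1|<\gamma^{1/3}n$ yields a direct numerical contradiction (a quadratic inequality in $r_{ac}$) and does \emph{not} reduce to extremality at all, so your proposed dichotomy ``dense transit region or $\eta$-extremal'' is incomplete: there is a third branch that needs its own argument. Without the degree-threshold refinement you have no way to guarantee the $\Omega(n)$-many back- and forward-extensions per strongly absorbable pair, and without the case split you cannot handle the regime where the would-be transit sets are small. So while the ingredients you name are all correct, the proposal as written would not assemble into a proof.
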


We first prove \cref{lem:absorbinglemma} under the assumption that \cref{lm:absorbing} holds; the proof of \cref{lm:absorbing} will be provided later.

\begin{proof}[Proof of \cref{lem:absorbinglemma}]
    Suppose $0 < 1/n \ll \alpha_2 \ll \eta \ll 1$. Let $G$ be an $n$-vertex oriented graph satisfying Ore-degree condition, and let $G$ is not $\eta$-extremal. 
    Note that we may use $(u,u)$ to refer to $u$ so that we can transfer the absorbing property of one vertex to that of two vertices. Let $S_s\subseteq V(G)\times V(G)$ be the set of all pairs of vertices $(u,v)$ such that $(u,v)$ is $\alpha_1$-strongly absorbable. Let $S_w$ be the set of all pairs $(u,u)$ such that $u$ is $(\alpha_2,\alpha_1)$-weakly absorbable. Let $\cL_s(u,v)$ and $\cL_w(u,v)$ be the set of strong absorbers and weak absorbers of $(u,v)$, respectively.

    First applying \cref{lem:absorberfamily} to $S_w$ and $\cL_w$ with $\sigma= \alpha_2$ and $t=4$, we can find a family $\cF_w$ of weak absorbers such that
    \begin{align}
        \label{weakabcondition}
        |\cF_w| \le 2^{-6} \alpha_2 n,~~~~|\cL_w(u,u) \cap \cF_w| \ge 2^{-10} \alpha_2^2 n \text{ for all } (u,u)\in S_w.
    \end{align}
    Then for every $(u,v)\in S_s$, let $\cL'_s(u,v)$ be the subset of $\cL_s(u,v)$ by removing all elements that intersect with $V(\cF_w)$. Since $\alpha_2 \ll \alpha_1$ we can find a family $\cF_s$ of strong absorbers satisfying the property analogous to (\ref{weakabcondition}) with respect to $S_s$ and $\cL'_s(u,v)$. We claim that there exists a path $P_{abs}$ satisfying that:
    \begin{enumerate}[label={\rm(\roman*)}]
        \item for every $wz\in \cF_s$, $P_{abs}$ contains the edge $wz$.
        \item for every $ww'z'z\in \cF_w$, $P_{abs}$ contains the edges $ww'$ and $z'z$. Furthermore, $P_{abs}$ contains a $(w',z')$-path which does not intersect with any other absorber in $\cF_s \cup \cF_w$.
        \item $P_{abs}$ has at most $\alpha_2 n/4$ vertices.
    \end{enumerate}
    In fact, $P_{abs}$ can be found by greedily connecting all edges of these absorbers in order. \cref{lem:connectinglemma} guarantees that each connecting operation consumes at most $3$ extra vertices. Hence the number of vertices in $P_{abs}$ is at most $7|\cF_s\cup \cF_w| \le \alpha_2 n/4$.

    By \cref{lm:absorbing}, every vertex is either strongly or weakly absorbable, and hence can be absorbed by $P_{abs}$ using exactly one strong absorber and at most one weak absorber. Thus any vertex subset $U\subseteq V(G)\setminus V(P_{abs})$ with $|U| \le 2^{-10}\alpha_2^2 n$ can be greedily absorbed by $P_{abs}$.
\end{proof}

We now prove \cref{lm:absorbing}.

\begin{proof}[Proof of \cref{lm:absorbing}]
    suppose to the contrary that $G$ is not $\eta$-extremal and there is a vertex $v$ that is neither $\alpha_1$-strongly absorbable nor $(\alpha_2,\alpha_1)$-weakly absorbable. Let $A_0 \coloneqq N^+(v)$, $C_0 \coloneqq N^-(v)$, and $R \coloneqq V(G) \setminus (A_0 \cup C_0 \cup \{v\})$. Let $\gamma$ be a positive real number such that $\alpha_1 \ll \gamma \ll \eta$. We define some specific subsets of $A_0,C_0,R$ as follows.
    \begin{align*}
        &A_1 \coloneqq \{w \in A_0 \mid \deg^+_{A_0}(w) \ge \gamma n\};\\
        &C_1 \coloneqq \{w \in C_0 \mid \deg^-_{C_0}(w) \ge \gamma n\};\\
        &R_A \coloneqq \{w \in R \mid \deg^+_{A_0}(w) \ge \gamma n\};\\
        &R_C \coloneqq \{w \in R \mid \deg^-_{C_0}(w) \ge \gamma n\};\\
        &A_2 \coloneqq A_0 \setminus A_1;\\
        &C_2 \coloneqq C_0 \setminus C_1.
    \end{align*}
    Since $v$ is neither strongly nor weakly absorbable, the edges between some vertex subsets must be almost empty:
    \begin{align}
        \label{eq:4.1}
        e(C_0,A_0) \le \alpha_1 n^2;
    \end{align}
    \[e(A_1 \cup R_A,C_1 \cup R_C) \le \left(32\left(\frac{\alpha_2}{\gamma^2} + \alpha_1\right)\right)^{1/4} n^2.\]
    The first inequality immediately follows from the assumption that $v$ is not $\alpha_1$-strongly absorbable. In fact, if we suppose that the second inequality does not hold, then there exist $\alpha_2 n^2/\gamma^2$ pairs of vertices $(x,y)$, where $x\in C_1 \cup R_C $ and $y\in A_1 \cup R_A$, that are $\alpha_1$-strongly absorbable by \cref{cl:densestrong}. So the number of $(\alpha_2,\alpha_1)$-weak absorbers of $v$ is at least $(\alpha_2 n^2/\gamma^2) \cdot (\gamma n)^2 \ge \alpha_2 n^4$, which contradicts our assumption that $v$ is not $(\alpha_2,\alpha_1)$-weakly absorbable.
    We may further assume that $\alpha_1 \le \gamma^5$ and $\alpha_2 \le \gamma^7$ since $\alpha_1,\alpha_2 \ll \gamma$. Then we have
    \begin{align}
        \label{eq:4.2}
        e(A_1 \cup R_A,C_1 \cup R_C) \le \gamma n^2 /2.
    \end{align}
    Let us denote $R'_A \coloneqq R_A \setminus R_C$, $R'_C \coloneqq R_C \setminus R_A$, $R_{AC} \coloneqq R_A \cap R_C$, and $R' \coloneqq R \setminus (R_A \cup R_C)$. For each of the vertex subsets defined above, we use the corresponding lowercase letter to denote its size. For instance, we write $a_1 \coloneqq |A_1|$, $r_a \coloneqq |R_A|$, etc.

    We need the following inequalities, which will be proved at the end of this subsection.
    \begin{equation}
        \label{eq:c1a1}
        \tag{$C_1A_1$}
        \frac{3n}{4} a_1 c_1 \le c_1\binom{a_1}{2} + a_1\binom{c_1}{2} + c_1\cdot e(R,A_1) + a_1\cdot e(C_1,R) + \gamma \cdot n^3.
    \end{equation}
    \begin{equation}
        \label{eq:a1c1}
        \tag{$A_1C_1$}
        \frac{3n}{4} a_1 c_1 \le c_1\binom{a_1}{2} + a_1\binom{c_1}{2} + 2 a_1 c_1 (a_2 + c_2) + c_1\cdot e(A_1,R) + a_1\cdot e(R,C_1) + \gamma \cdot n^3.
    \end{equation}
    \begin{equation}
        \label{eq:c2a1}
        \tag{$C_2A_1$}
        \frac{3n}{4} a_1 c_2 \le c_2\binom{a_1}{2} + a_1 c_1 c_2 + c_2\cdot e(R,A_1) + a_1\cdot e(C_2,R) + \gamma \cdot n^3.
    \end{equation}
    \begin{equation}
        \label{eq:c1a2}
        \tag{$C_1A_2$}
        \frac{3n}{4} a_2 c_1 \le a_1 a_2 c_1 + a_2\binom{c_1}{2} + c_1\cdot e(R,A_2) + a_2\cdot e(C_1,R) + \gamma \cdot n^3.
    \end{equation}
    \begin{equation}
        \label{eq:c2a2}
        \tag{$C_2A_2$}
        \frac{3n}{4} a_2 c_2 \le a_1 a_2 c_2 + a_2 c_1 c_2 + c_2\cdot e(R,A_2) + a_2\cdot e(C_2,R) + \gamma \cdot n^3.
    \end{equation}
    
    \begin{equation}
        \label{eq:vra'}
        \tag{$vR'_A$}
        r'_a \left( c_1 + r - \frac{n}{2} \right) \le 2\gamma \cdot n^2.
    \end{equation}
    \begin{equation}
        \label{eq:vrc'}
        \tag{$vR'_C$}
        r'_c \left( a_1 + r - \frac{n}{2} \right) \le 2\gamma \cdot n^2.
    \end{equation}
    \begin{equation}
        \label{eq:vrac}
        \tag{$vR_{AC}$}
        r_{ac} \left( r_{ac} + r - \frac{n}{2} \right) \le 2\gamma \cdot n^2.
    \end{equation}
    
    \begin{equation}
        \label{eq:a1v}
        \tag{$A_1v$}
        \frac{3n}{4} a_1 \le a_1 c_1 + a_1 c_2 + \binom{a_1}{2} + a_1 (a_2 + c_2) + e(A_1,R) + \gamma \cdot n^2.
    \end{equation}
    \begin{equation}
        \label{eq:vc1}
        \tag{$vC_1$}
        \frac{3n}{4} c_1 \le a_1 c_1 + a_2 c_1 + \binom{c_1}{2} + c_1 (a_2 + c_2) + e(R,C_1) + \gamma \cdot n^2.
    \end{equation}
    
     We now present the following claim to characterize $e(R,A_i)$ and $e(C_i,R)$ where $0\le i \le 2$.
    \begin{cl}\label{cl:rarc}
        Let $i$ be an integer with $1\le i\le 2$. If $a_i \ge n/t$ for some positive constant $t>1$, then it holds that
        \[\frac{e(R,A_i)}{a_i} \le r_a + t \gamma n.\]
        Moreover, if $r_a \le \frac{e(R,A_i)}{a_i} + \sigma n$, then $e(R,A_i) \ge a_ir_a - \sigma n^2$.
    \end{cl}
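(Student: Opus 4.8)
The plan is to bound $e(R,A_i)$ by applying the Ore-degree condition to pairs $(w,x)$ with $w\in R$, $x\in A_i$, $wx\notin E(G)$, exploiting the structural facts already established: $e(C_0,A_0)\le\alpha_1 n^2$ (inequality~\eqref{eq:4.1}) and the definitions of $A_1,A_2,R_A$. Recall $A_2=A_0\setminus A_1$ consists of vertices of $A_0$ with small out-degree into $A_0$ (less than $\gamma n$), and $R_A$ consists of vertices of $R$ with out-degree into $A_0$ at least $\gamma n$; so a vertex $w\in R\setminus R_A$ sends at most $\gamma n$ edges to $A_0\supseteq A_i$. First I would count $e(R,A_i)$ by splitting $R=R_A\cup(R\setminus R_A)$: the part from $R\setminus R_A$ contributes at most $(r-r_a)\cdot\gamma n\le\gamma n^2$, so $e(R,A_i)\le e(R_A,A_i)+\gamma n^2\le r_a|A_i|'$-type bound is not yet tight enough, so instead I would get the \emph{averaged} bound via the degree condition directly.

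\medskip
\textbf{Main estimate.} For each $x\in A_i$, consider $W_x\coloneqq\{w\in R: wx\notin E(G)\}$, so $|W_x|\ge r-\deg^-_R(x)\ge r - (\text{in-neighbors of }x\text{ in }R)$. The key point is that for $x\in A_i$, most in-neighbors must avoid $C_0$: since $e(C_0,A_0)\le\alpha_1 n^2$, all but at most $\sqrt{\alpha_1}\,n$ vertices $x\in A_0$ satisfy $\deg^-_{C_0}(x)\le\sqrt{\alpha_1}\,n$; discard the exceptional ones (they change $e(R,A_i)$ by at most $\sqrt{\alpha_1}\,n\cdot n\ll\gamma n^2$, using $\alpha_1\le\gamma^5$). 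For the remaining $x$, summing $\deg^+(w)+\deg^-(x)\ge(3n-3)/4$ over $w\in W_x$ and then over $x\in A_i$, and crucially noting $\deg^-(x)\le a_0 + \deg^-_{R}(x) + \sqrt{\alpha_1}n$ (in-neighbors lie in $A_0$, in $R$, or at $v$, minus the negligible $C_0$-part), one derives after dividing by $a_i$ (using $a_i\ge n/t$ to control the error terms by $t\gamma n$) the inequality
\[
\frac{3n}{4} - \text{(error)} \;\le\; \overline{\deg^+}(R\cap\overline{N^-(A_i)}) + a_0 + \overline{\deg^-_R}(A_i),
\]
and combining this with the trivial bound $\sum_{x\in A_i}\deg^-_R(x)=e(R,A_i)$ and the fact that $r_a$ counts exactly the $w\in R$ contributing heavily, one isolates $e(R,A_i)/a_i\le r_a+t\gamma n$. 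The ``moreover'' part is then immediate: if additionally $r_a\le e(R,A_i)/a_i+\sigma n$, write $e(R,A_i)=\sum_{w\in R_A}\deg^+_{A_i}(w)+\sum_{w\in R\setminus R_A}\deg^+_{A_i}(w)\le r_a a_i$ trivially, but to get the reverse $e(R,A_i)\ge a_i r_a-\sigma n^2$ one rearranges the hypothesis $r_a\,a_i\le e(R,A_i)+\sigma n\,a_i\le e(R,A_i)+\sigma n^2$, which is exactly the claim.

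\medskip
\textbf{Where the difficulty lies.} The bookkeeping of error terms is the main obstacle: one must carefully track which vertices of $A_0$ have too many in-neighbors in $C_0$ and discard them without losing more than $O(\gamma n^2)$ edges, and one must make sure the division by $a_i$ only amplifies absolute errors of order $\gamma n^2$ into errors of order $t\gamma n$ (this is exactly why the hypothesis $a_i\ge n/t$ appears, and why $t$ enters linearly in the conclusion). A secondary subtlety is that $\deg^+(w)$ for $w\in R$ can be as large as $n$, so the term $\sum_{w\in W_x}\deg^+(w)$ must be compared against the \emph{trivial} upper bound $r\cdot n$ rather than anything finer — but since this appears with the favorable sign after moving it to the correct side, and the genuine content is the lower bound $(3n-3)/4$ on each summand, the argument still goes through. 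I would also double-check that the ``moreover'' conclusion is used downstream only in the regime where $\sigma\ll\gamma$, so that the two bounds $e(R,A_i)\le a_i r_a + t\gamma n\cdot a_i$ and $e(R,A_i)\ge a_i r_a-\sigma n^2$ together pin down $e(R,A_i)$ to within $O(\gamma n^2)$, which is what the subsequent extremal-structure analysis needs.
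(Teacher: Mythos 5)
You actually identified the correct and complete proof and then talked yourself out of it. You write that splitting $e(R,A_i)=e(R_A,A_i)+e(R\setminus R_A,A_i)$ gives $e(R,A_i)\le r_a a_i+\gamma n^2$, and then dismiss this as ``not yet tight enough.'' It is exactly tight enough: dividing by $a_i$ and using the hypothesis $a_i\ge n/t$ yields
\[
\frac{e(R,A_i)}{a_i}\;\le\; r_a + \frac{\gamma n^2}{a_i}\;\le\; r_a + t\gamma n,
\]
which is precisely the first assertion of the claim. This one-line computation is the paper's entire proof of the first part. Your handling of the ``moreover'' part (multiply $r_a\le e(R,A_i)/a_i+\sigma n$ through by $a_i\le n$ and rearrange) is correct and matches the paper.

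The detour you then propose through the Ore-degree condition --- discarding vertices of $A_0$ with many in-neighbours in $C_0$, summing $\deg^+(w)+\deg^-(x)$ over non-edges $(w,x)$ with $w\in R$, $x\in A_i$, and ``deriving after dividing by $a_i$ the inequality \dots'' --- is never carried to a concrete conclusion, and it is also the wrong tool here. The claim is a statement purely about the definition of $R_A$: vertices of $R\setminus R_A$ send fewer than $\gamma n$ edges into $A_0\supseteq A_i$ by fiat, so they contribute at most $\gamma n^2$ in total, and $R_A$ trivially contributes at most $r_a a_i$. No degree condition on $G$ is needed. The issue with your writeup is therefore not a flaw in the approach you carried furthest, but that you abandoned the correct (and completed) approach in favour of an unfinished one that introduces irrelevant machinery.
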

    \begin{proof}
        The second result holds trivially if we divide both sides of that inequality by $a_i$. We then prove the first inequality. By the definition of $R_A$, we have
        \[e(R,A_i) = e(R_A,A_i) + e(R\setminus R_A,A_i) \le r_a a_i + \gamma n^2.\]
        Hence we have
        \[\frac{e(R,A_i)}{a_i} \le r_a + t \gamma n,\]
        since $a_i\ge n/t$.
    \end{proof}

    We finish the proof of this lemma by analysing the different cases according to the order of $A_1$ and $C_1$.

   \textbf{Case 1:} $a_1\le \gamma^{1/3} n$, $c_1\le \gamma^{1/3} n$.
   
    Note that $\delta^0(G)\ge n/8$ by \Cref{le:delta0}, together with (\ref{eq:c2a2}) and \cref{cl:rarc} we can get
    \begin{align}
        \label{eq:4.3}
        \frac{3n}{4} \le a_1 + c_1 + \frac{e(R,A_2)}{a_2} + \frac{e(C_2,R)}{c_2} + 100\gamma n \le a_1 + c_1 + r_a + r_c + 120\gamma n.
    \end{align}
    Notice that $r = r_a + r_c - r_{ac} + r'$,  we have $r_a + r_c \ge 3n/4 - 3\gamma^{1/3} n > n/2$ by (\ref{eq:4.3}). Together with (\ref{eq:vra'}), (\ref{eq:vrc'})~and~(\ref{eq:vrac}), we can get
    \[\frac{r'_a \cdot c_1 + r'_c \cdot a_1 + 2(r_{ac})^2}{r'_a + r'_c + 2r_{ac}} \le \frac{n}{2} - r + 16\gamma n.\]
   It also holds that
    \[\frac{r'_a \cdot c_1 + r'_c \cdot a_1 + 2(r_{ac})^2}{r'_a + r'_c + 2r_{ac}} \ge \frac{2(r_{ac})^2}{r_a + r_c} \ge \frac{2(r_a + r_c - r)^2}{r_a + r_c}.\]
    Hence 
    \[\frac{2(r_a + r_c - r)^2}{r_a + r_c} \le \frac{n}{2} - r + 16\gamma n,\]
    which is impossible by the property of quadratic function. That is a contradiction.

    \textbf{Case 2:} $a_1\le \gamma^{1/3} n\le c_1$ or $c_1\le \gamma^{1/3} n\le a_1$.
    
    By symmetry, we can assume that $a_1\le \gamma^{1/3} n\le c_1$.
    By inequalities (\ref{eq:c1a2}) and (\ref{eq:vc1}) we have 
    \begin{align}
        \label{c1a2}
        \frac{3n}{4} &\le a_1 + \frac{c_1}{2} + \frac{e(R,A_2)}{a_2} + \frac{e(C_1,R)}{c_1} + 10 \gamma^{2/3} n,\\
        \label{vc1}
        \frac{3n}{4} &\le a_1 + \frac{c_1}{2} + 2a_2 + c_2 + \frac{e(R,C_1)}{c_1} + \gamma^{2/3} n.
    \end{align}
    Note that $e(R'_A,C_1) + e(C_1,R'_A) \le 2\gamma n^2$ by inequality (\ref{eq:4.2}) and the definition of $R'_A$. Combining inequalities (\ref{c1a2}) and (\ref{vc1}), we obtain
    \[\frac{3n}{2} \le 2a_1 + c_1 + 2a_2 + c_2 + \frac{e(R,A_2)}{a_2} + (r-r'_a) + 12\gamma^{2/3} n.\]
    Notice that
    \[n - 1 = a_1 + a_2 + c_1 + c_2 + r,\]
    we have
    \begin{align}
        \label{eq:4.4}
        \frac{n}{2} \le a_1 + a_2 + \frac{e(R,A_2)}{a_2} - r'_a + 12\gamma^{2/3} n.
    \end{align}
    By \cref{cl:rarc} we have

\begin{align}
        \label{eq:add-1}
       \frac{n}{2} \le a_1 + a_2 + r_{ac} + 13\gamma^{2/3} n.
    \end{align} 
By the definitions of $A_2$ and $R_{AC}$, we have $e(G[A_2])$, $e(G[R_{AC}])\le \gamma n^2$.
    Applying \cref{lm:emptyset} we can get
    \begin{align}
        \label{eq:4.5}
        a_2 \le \frac{n}{4} + 21\gamma n,~r_{ac} \le \frac{n}{4} + 21\gamma n.
    \end{align}
    Together with $a_1 \le \gamma^{1/3} n$ and (\ref{eq:add-1}) we have
    \[\frac{n}{4} - 2\gamma^{1/3} n \le a_2,r_{ac} \le \frac{n}{4} + 21\gamma n.\]
    Again by (\ref{eq:4.4}) we have
    \[\frac{e(R,A_2)}{a_2} - r'_a \ge \frac{n}{4} - 1.5\gamma^{1/3} n,\]
    while
    \[r_{ac} = r_a - r'_a \le \frac{n}{4} + 21\gamma n.\]
    It follows that
    \[r_a \le \frac{e(R,A_2)}{a_2} + 2\gamma^{1/3} n.\]
    By \cref{cl:rarc} we have
    \[e(R,A_2) \ge a_2r_a - 2\gamma^{1/3} n^2.\]
    Thus,
    \begin{align}
        \label{eq:4.6}
        e(R_A,A_2) \ge e(R,A_2) - \gamma n^2 \ge a_2 r_a - 3\gamma^{1/3} n^2.
    \end{align}
    
    Now let $A \coloneqq \emptyset$, $B \coloneqq A_2$, $D \coloneqq R_{AC}$, and $C \coloneqq V(G) \setminus (B \cup D)$. We claim that $G$ is $\eta$-extremal with respect to this partition. From (\ref{eq:4.5}), we have
    \[\frac{n}{2} - 42\gamma n \le |C| \le \frac{n}{2} + 4\gamma^{1/3} n.\]
    Clearly the sizes of the partition satisfy the condition of $\eta$-extremal. Recall that $e(B),e(D) \le \gamma n^2$. It remains to check $e(B,C)$, $e(C)$, $e(C,D)$ and $e(D,B)$. Since $B=A_2$ and $D\subseteq R_A$, by (\ref{eq:4.6}) we know that
    \[e(D,B) \ge |B||D| - 3\gamma^{1/3} n^2.\]
    For every vertex $x\in R_{AC}$ we have $vx\notin E(G)$, which implies that
    \[\deg^-(x) \ge \frac{n}{2} - 2\gamma^{1/3} n.\]
    So we have
    \[e(C,D) \ge |D| \cdot \deg^-(x) - e(D) - e(B,D) \ge \frac{n}{2}|D| - 6\gamma^{1/3} n^2 \ge |C||D| - 8\gamma^{1/3} n^2.\]
    Now consider all pairs of vertices $(x,y)\in D\times C$ such that $xy\notin E(G)$. By Ore-degree condition we have
    \begin{align*}
        \frac{3n-3}{4} (|C||D| - 8\gamma^{1/3} n^2) &\le \sum_{\substack{x\in D,y\in C\\xy\notin E(G)}} \deg^+(x) + \deg^-(y)\\
        &\le |C| (e(D,B) + e(D,C) + e(D)) + |D| (e(B,C) + e(C) + e(D,C)).
    \end{align*}
    So
    \[e(B,C) + e(C) \ge |C|\left( |B| + \frac{|C|}{2} \right) - 65\gamma^{1/3} n^2,\]
    which gives lower bounds of $e(B,C)$ and $e(C)$. Therefore we have proved that $G$ is $\eta$-extremal since $\gamma \ll \eta$, a contradiction.

    \textbf{Case 3:} $a_1\ge \gamma^{1/3} n$, $c_1\ge \gamma^{1/3} n$.
    
    By (\ref{eq:c1a1}), we have
    \begin{align}
        \label{eq:4.7}
        \frac{3n}{4} \le \frac{a_1}{2} + \frac{c_1}{2} + \frac{e(R,A_1)}{a_1} + \frac{e(C_1,R)}{c_1} + \gamma^{1/3} n\le \frac{a_1}{2} + \frac{c_1}{2} + r_a+ r_c + 2\gamma^{1/3} n.
    \end{align}
    It follows from (\ref{eq:a1c1})  that
    \[\frac{3n}{4} \le \frac{a_1}{2} + \frac{c_1}{2} + 2(a_2 + c_2) + \frac{e(A_1,R)}{a_1} + \frac{e(R,C_1)}{c_1} + \gamma^{1/3} n.\]
    Taking the sum of above two inequalities we have
    \[\frac{3n}{2} \le a_1 + c_1 + 2(a_2 + c_2) + 2r + 2\gamma^{1/3} n \le 2n - a_1 - c_1 + 2\gamma^{1/3} n,\]
    that is,
    \begin{align}
        \label{eq:4.8}
        a_1 + c_1 \le \frac{n}{2} + 2\gamma^{1/3} n.
    \end{align}
    By (\ref{eq:4.7}) we have
    \begin{align}
        \label{rac-r'}
        r_{ac} - r' = r_a + r_c - r \ge -\frac{n}{4} + \frac{a_1}{2} + \frac{c_1}{2} + a_2 + c_2 - 2\gamma^{1/3} n.
    \end{align}
Dividing (\ref{eq:c1a1}) by $a_1 c_1$, (\ref{eq:a1v}) by $a_1$, and (\ref{eq:vc1}) by $c_1$, and then adding the resulting inequalities, we obtain
    
    \[\frac{9n}{4} \le 2a_1 + 2c_1 + 3a_2 + 3c_2 + (r-r'_a) + (r-r'_c) + 2\gamma^{1/3} n \le 2n + a_2 + c_2 - r'_a - r'_c + 2\gamma^{1/3} n,\]
    that is,
    \begin{align}
        \label{eq:4.9}
        a_2 + c_2 \ge \frac{n}{4} + r'_a + r'_c - 2\gamma^{1/3} n.
    \end{align}
    Substituting to (\ref{rac-r'}) we have
    \begin{align}
        \label{eq:4.10}
        r_{ac} - r' \ge \frac{a_1}{2} + \frac{c_1}{2} + r'_a + r'_c - 4\gamma^{1/3} n.
    \end{align}
    We claim that $r_{ac} \ge n/10$, which will allow us to apply (\ref{eq:vrac}) later. In fact, by (\ref{eq:4.7}) and (\ref{eq:4.8}) we can get
    \[r_a + r_c \ge \frac{n}{2} - 3 \gamma^{1/3} n.\]
    Again noting that $r_a + r_c = r'_a + r'_c + 2r_{ac} \le 3r_{ac} + 3\gamma^{1/3} n$ by (\ref{eq:4.10}), which implies that $r_{ac} \ge n/6 - 2\gamma^{1/3} n$.
    Hence we may use (\ref{eq:vrac}) and get
    \[r_{ac} + r - \frac{n}{2} \le 20\gamma n.\]
    It implies that
    \begin{align*}
        a_1 + a_2 + c_1 + c_2 &= n - r - 1 \ge \frac{n}{2} + r_{ac} - 21\gamma n\\
        &\ge \frac{n}{4} + \frac{a_1}{2} + \frac{c_1}{2} + a_2 + c_2 - 3\gamma^{1/3} n,
    \end{align*}
    where the last inequality holds by (\ref{rac-r'}). Hence we have
    \begin{align}
        \label{eq:4.11}
        \frac{n}{2} - 6\gamma^{1/3} n \le a_1 + c_1 \le \frac{n}{2} + 2\gamma^{1/3} n.
    \end{align}
    Again by (\ref{eq:4.7}) we have
    \begin{align*}
        \frac{3n}{4} &\le \frac{1}{2} (n - a_2 - c_2 - r) + r_a + r_c + 2\gamma^{1/3} n\\
        &\le \frac{1}{2} (n - a_2 - c_2 - r) + (r - r'_c) + (r - r'_a) + 2\gamma^{1/3} n\\
        &= \frac{3}{2}r + \frac{n}{2} - \frac{1}{2} (a_2 + c_2 - r'_a - r'_c) - \frac{3}{2} (r'_a + r'_c) + 2\gamma^{1/3} n\\
&\overset{(\ref{eq:4.9})}{\le} 
         \frac{3}{2}r + \frac{3n}{8} - \frac{3}{2} (r'_a + r'_c) + 3\gamma^{1/3} n,
    \end{align*}
    that is,
    \[r \ge \frac{n}{4} + (r'_a + r'_c) - 2\gamma^{1/3} n.\]
    Hence combining (\ref{eq:4.9}) and (\ref{eq:4.11}) we have
    \[\frac{n}{4} - 2\gamma^{1/3} n \le r \le \frac{n}{4} + 8\gamma^{1/3} n,\]
    and
    \[\frac{n}{4} - 2\gamma^{1/3} n \le a_2 + c_2 \le \frac{n}{4} + 8\gamma^{1/3} n.\]
    Let $A \coloneqq A_1$, $B \coloneqq A_2 \cup C_2$, $C \coloneqq C_1$, and $D \coloneqq R \cup \{v\}$. It remains to prove that $G$ is $\eta$-extremal with respect to this partition. In fact, we can obtain stronger results about $a_1$ and $c_1$ beyond (\ref{eq:4.11}). Let us consider the complement of $G[A]$ in the complete directed graph of order $a_1$, named $\overline{G}[A]$. Since $G$ is an oriented graph, $\overline{G}[A]$ must contain a tournament, and hence a Hamilton path $v_1v_2\ldots v_{a_1}$. Now we may use Ore-degree condition and get
    \begin{align}
        \label{eq:4.12}
        \frac{3n-3}{4} (a_1 - 1) \le \sum_{i=1}^{a_1-1} \deg^+(v_i) + \deg^-(v_{i+1}) \le \sum_{x\in A} \deg(x) \le 2\binom{a_1}{2} + a_1 (|B| + |D|) + 2 \gamma n^2.
    \end{align}
    It implies that
    \[a_1 \ge \frac{n}{4} - 17\gamma^{1/3} n.\]
    By symmetry and (\ref{eq:4.11}) we conclude that
    \[\frac{n}{4} - 17\gamma^{1/3} n \le a_1,c_1 \le \frac{n}{4} + 19\gamma^{1/3} n.\]
    Note that by (\ref{eq:4.10}) we have
    \[r_{ac} \ge (a_1 + c_1)/2 - 4\gamma^{1/3} n \ge n/4 - 7\gamma^{1/3} n.\]
    A more precise analysis of $\deg(x)$ in (\ref{eq:4.12}) yields
    \begin{align}
        \label{eq:5.19}
        \frac{3n}{4} a_1 \le 2 e(A) + e(A,B) + e(D,A) + 6\gamma^{1/3} n^2.
    \end{align}
    To see this, by the definition of the partition and (\ref{eq:4.2}) we have
    \begin{align*}
        &e(B,A) = e(A_2,A_1) + e(C_2,A_1) \le 2 \gamma n^2,\\
        &e(C,A) + e(A,C) = e(C_1,A_1) + e(A_1,C_1) \le 2 \gamma n^2,\\
        &e(A,D) = e(A_1,R_{AC}) + e(A_1,R\setminus R_{AC}) \le \gamma n^2 + a_1\cdot 15\gamma^{1/3} n \le 5\gamma^{1/3} n^2.
    \end{align*}
    Note that $2e(A) + e(A,B) + e(D,A) \le a_1(|A| + |B| + |D|)$. 
The terms $e(A)$, $e(A,B)$, and $e(D,A)$ satisfy the condition of required for $\eta$-extremal by the choice of $\gamma \ll \eta$. By symmetry, it can be similarly demonstrated that $e(C)$, $e(B,C)$, and $e(C,D)$ also meet these conditions. It remains to verify $e(B,D)$. To this end, consider all pairs of vertices $(x,y)\in A\times D$ such that $xy\notin E(G)$, then we have
    \[\frac{3n-3}{4} (|A||D| - 5\gamma^{1/3} n^2) \le \sum_{x\in A,y\in D} \deg^+(x) + \deg^-(y) \le |D| \sum_{x\in A} \deg^+(x) + |A| \sum_{y\in D} \deg^-(y).\]
    For the right side, we have
    \[\sum_{x\in A} \deg^+(x) \le |A|^2/2 + |A||B| + 6\gamma^{1/3} n^2,\]
    and
    \[\sum_{y\in D} \deg^-(y) \le |C||D| + e(B,D) + 5\gamma^{1/3} n^2 + e(D).\]
    We also have
    \[e(D) \le e(R_{AC}) + e(R\setminus R_{AC},R) + e(R,R\setminus R_{AC}) \le \gamma n^2 + r \cdot 15\gamma^{1/3} n \le 5\gamma^{1/3} n^2.\]
    Summarizing the above inequalities, we conclude that
    \[e(B,D) \ge |A|n/8 - 55\gamma^{1/3} n^2.\]
    This bound holds for $e(D,B)$ symmetrically. Recall that $e(A,C) \le \gamma n^2$ by (\ref{eq:4.2}). Therefore, $G$ is $\eta$-extremal, which is a contradiction.
\end{proof}

\begin{proof}[Proof of (\ref{eq:c1a1})]
    Consider all pairs of vertices $(x,y) \in C_1 \times A_1$ with $xy \notin E(G)$. By (\ref{eq:4.1}) and Ore-degree condition, we have
    \[\frac{3n-3}{4} (a_1c_1 - \alpha_1
    n^2) \le \sum_{\substack{x\in C_1,y\in A_1\\xy \notin E(G)}} \deg^+(x) + \deg^-(y).\]
    For the right side, we have
    \[\sum_{\substack{x\in C_1,y\in A_1\\xy \notin E(G)}} \deg^+(x) \le a_1 \sum_{x\in C_1} \deg^+(x) \le a_1 \binom{c_1}{2} + a_1 \cdot e(C_1,R) + a_1 (\alpha_1 n^2 + c_2 \cdot \gamma n),\]
    since there are at most $\alpha_1 n^2$ edges from $C_1$ to $A$ by (\ref{eq:4.1}), and at most $c_2 \cdot \gamma n$ edges from $C_1$ to $C_2$ by the definition of $C_2$.
    Similarly we have
    \[\sum_{\substack{x\in C_1,y\in A_1\\xy \notin E(G)}} \deg^-(y) \le c_1 \binom{a_1}{2} + c_1 \cdot e(R,A_1) + c_1 (\alpha_1 n^2 + a_2 \cdot \gamma n).\]
    Hence we have
    \[\frac{3n}{4} a_1 c_1 \le c_1\binom{a_1}{2} + a_1\binom{c_1}{2} + c_1\cdot e(R,A_1) + a_1\cdot e(C_1,R) + \left(a_1 c_1 + 3\alpha_1 n^3 + (a_1 c_2 + c_1 a_2) \gamma n\right).\]
    Note that $a_1 c_1 \le n^2$, $\alpha_1 \ll \gamma$, and $a_1 c_2 + c_1 a_2 \le n^2/2$ by the partition of $V(G)$, the last term of the above inequality is less than $\gamma \cdot n^3$. Therefore, we have proved (\ref{eq:c1a1}).
\end{proof}

\begin{proof}[Proof of (\ref{eq:vra'})]
    Note that for every vertex $x\in R'_A$, $x$ is not adjacent to $v$. So we have
    \[\frac{3n-3}{2} r'_a \le \sum_{x\in R'_A} \deg(x) + \deg(v) \le (n - r - 1) r'_a + \sum_{x\in R'_A} \deg(x).\]
    By the definition of $R'_A$ and (\ref{eq:4.2}) we have
    \[e(C_1,R'_A) \le \gamma n^2, ~e(R'_A,C_1) \le \gamma n^2.\]
    Hence we can get
    \[(\frac{n-1}{2} + r) r'_a \le (n-1 - c_1) r'_a + 2\gamma n^2.\]
Therefore,
    \[r'_a (c_1 + r - \frac{n}{2}) \le 2\gamma n^2,\]
as desired.
\end{proof}

\begin{proof}[Proof of (\ref{eq:a1v})]
    For every vertex $x \in A_1$, $xv\notin E(G)$. So we have
    \[\frac{3n-3}{4} a_1 \le \sum_{x\in A_1} \deg^+(x) + d^-(v) \le \sum_{x\in A_1} \deg^+(x) + a_1 (c_1 + c_2).\]
    By (\ref{eq:4.2}) we have
    \[e(A_1,C_1) \le \frac{\gamma n^2}{2}.\]
    Hence we can get
    \[\frac{3n}{4} a_1 \le \binom{a_1}{2} + a_1 (a_2 + c_2) + e(A_1,R) + a_1 (c_1 + c_2) + \gamma n^2,\]
    as desired.
\end{proof}
We omit the proofs of the remaining inequalities, as they can be obtained using the same approach discussed above.

\section{Proof of Covering Lemma}\label{s5}
\begin{proof}[Proof of Lemma~\ref{lem:almostcover}]
Let $M,M'\in \mathbb{N}$ be additional constants such that
\[1/n \ll 1/M \ll 1/M' \ll \epsilon \ll d \ll \eta \ll  1.\]
Let $G$ be an oriented graph on $n$ vertices with $\deg^{+}(x)+\deg^{-}(y)\ge (3n-3)/4$ for any $xy\notin E(G)$, and let $X\subseteq V(G)$ with $|X|\le dn$. Set $G_X \coloneqq G[V(G)\setminus X]$ the subgraph of $G$ induced by $V(G)\setminus X$. For every pair of distinct vertices $x,y$ with $xy\notin E(G_X)$ we have $\deg^+(x) + \deg^-(y) \ge (3n-3)/4 - 2dn$. And we have $\delta^0(G_X) \ge n/8 - dn$ by \cref{le:delta0}.

We first apply the Diregularity lemma~\ref{lem:reg} to $G_X$ with parameters $(\epsilon^2/3,d,M')$ to obtain a partition $V_0,V_1,\ldots,V_k$ of $V(G_X)$ with $k\ge M'$. Let $R$ be the reduced oriented graph with parameters $(\epsilon^2/3,d)$ given by Lemma~\ref{lem:reduceR}. Let $G^*$ be the pure oriented graph. By \cref{lem:reduceR} we have
\begin{align}
    \delta^0(R) \ge \frac{k}{8} - 2dk.\label{equ:reduce-semideg}
\end{align}
And for any $x,y\in V(R)$ with $xy\notin E(R)$, we have
\begin{align}
    \deg_R^+(x)+\deg_R^-(y) \ge
    \frac{3k}{4} -3dk.\label{equ:reduce-dgree}
\end{align}
We divide the proof into two cases by whether there is a set $S\subset [k]$  with $|N_R^+(S)|<|S|$.\\

Case 1: For every $S\subset [k]$, we have $|N_R^+(S)|\ge |S|$ .\\

We claim that $R$ contains a $1$-factor. In fact, let us consider the following bipartite graph $H$: the vertex classes $A,B$ of $H$ are both copies of $V(R)$ and we connect a vertex $a\in A$ to $b\in B$ in $H$ if there is a directed edge from $a$ to $b$ in $R$. The graph $H$ admits a perfect matching by Hall's matching theorem, which corresponds to a $1$-factor in $R$.

Let $G'_S$ be the subgraph of $G^*$ obtained from \cref{prop:superregular} by letting $S$ be a $1$-factor of $R$ (and hence $D=2$). More precisely, $V(G'_S)$ is the union of $V'_i$, each of which is a subcluster of $V_i$ by deleting exactly $4\epsilon |V_i|$ vertices. And for every $ij\in E(S)$, $G'_S[V'_i,V'_j]$ is $(\sqrt{\epsilon}, d-8\epsilon)$-super-regular. Note that all $V'_i$ have the same order, and every balanced blow-up of a directed cycle is Hamiltonian. Applying \cref{lem:blowup} we can get a $1$-factor $S'$ of $G'_S$ that consists of at most $k/3 \le M/3$ cycles. The number of uncovered vertices in $V(G)\setminus X$ is at most $|V_0| + \sum_{i=1}^{k} 4\epsilon|V_i| \le 5\epsilon n$. Removing one edge from each cycle in $S'$ we get a family of paths that satisfies the first alternative of this lemma.\\

Case 2: There is a set $S\subset [k]$ with $|N_R^+(S)|<|S|$.\\

Let us define
\[A_R\coloneqq S\cap N^+_R(S), ~B_R\coloneqq N^+_R(S)\setminus S, ~C_R\coloneqq [k]\setminus(S\cup N^+_R(S)), ~D_R\coloneqq S\setminus N^+_R(S).\]
Let $A\coloneqq\cup_{i\in A_R}V_i$, and define $B, C, D$ similarly. We further put all the vertices of $X \cup V_0$ into $A$. In order to show that $G$ is $\eta$-extremal, let us first prove, in the following claim, that the sizes of these four vertex subsets are appropriate.

\begin{cl}\label{cl:1}
$|B|= (1/4 \pm O(d)) n$,
$|D| = (1/4 \pm O(d)) n$. 
\end{cl}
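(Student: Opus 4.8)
The plan is to exploit the defining property of the partition, namely that $|N_R^+(S)| < |S|$, together with the two degree inequalities (\ref{equ:reduce-semideg}) and (\ref{equ:reduce-dgree}) inherited by the reduced oriented graph $R$. Write $s \coloneqq |S|$, $a_R \coloneqq |A_R|$, $b_R \coloneqq |B_R|$, $c_R \coloneqq |C_R|$, $d_R \coloneqq |D_R|$, so that $s = a_R + d_R$, $|N_R^+(S)| = a_R + b_R$, and $a_R + b_R + c_R + d_R = k$. The hypothesis $|N_R^+(S)| < |S|$ becomes $b_R < d_R$. The first observation is that there are no edges in $R$ from $S$ to $D_R$: by definition $D_R = S \setminus N_R^+(S)$, so no vertex of $D_R$ is an outneighbour of any vertex of $S$. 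In particular $A_R \cup D_R = S$ sends no edge into $D_R$, i.e. $e_R(A_R, D_R) = e_R(D_R, D_R) = 0$, and similarly there is no edge from $A_R$ or $D_R$ into $D_R$.

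Next I would extract the size bounds. Pick any vertex $x \in D_R$; since $D_R \subseteq S$ and $D_R \cap N_R^+(S) = \emptyset$, every outneighbour of $x$ lies in $B_R \cup C_R$ (it cannot lie in $A_R \cup D_R = S$), so $\deg_R^+(x) \le b_R + c_R$. Combining with $\delta^0(R) \ge k/8 - 2dk$ gives $b_R + c_R \ge k/8 - 2dk$. To get the complementary direction, take two vertices $x \in D_R$ and $y \in D_R$ (or a single vertex, using that $R$ is oriented and $D_R$ spans no edges so any two distinct vertices of $D_R$ are non-adjacent — if $d_R \ge 2$): the pair is a non-edge of $R$, so by (\ref{equ:reduce-dgree}), $\deg_R^+(x) + \deg_R^-(y) \ge 3k/4 - 3dk$. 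Here $\deg_R^+(x) \le b_R + c_R$ as above, and $\deg_R^-(y) \le a_R + b_R + c_R$ trivially (in fact one can do slightly better, but this suffices), which after using $a_R + b_R + c_R + d_R = k$ and $b_R < d_R$ will pin down $b_R$, $c_R$, $d_R$ each to within $O(d) k$ of the target values $k/4$, $k/4$, $k/4$ (and hence $a_R$ to within $O(d)k$ of $k/4$ as well, though that is not needed for the claim). Translating cluster counts back to vertex counts via $|V_i| = m$ for all $i \ge 1$ and $|V_0| \le (\epsilon^2/3) n$, and recalling that $X \cup V_0$ (of size at most $dn + \epsilon^2 n/3 \le O(d) n$) was dumped into $A$, we get $|B| = (b_R/k \pm O(\epsilon)) n = (1/4 \pm O(d)) n$ and $|D| = (1/4 \pm O(d)) n$.

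The routine part is the bookkeeping that converts the three scalar inequalities $b_R + c_R \ge k/8 - 2dk$, $b_R < d_R$, and $(b_R + c_R) + (a_R + b_R + c_R) \ge 3k/4 - 3dk$ (together with $\sum = k$) into the stated two-sided bounds; this is a short linear-programming-style squeeze and I would just carry it out directly. The one point that needs a little care is the degenerate case $d_R \le 1$: but then $b_R < d_R \le 1$ forces $b_R = 0$, and $|N_R^+(S)| = a_R < s = a_R + d_R$ forces $d_R \ge 1$, so $d_R = 1$, $b_R = 0$; this contradicts $b_R + c_R \ge k/8 - 2dk$ only if also $c_R$ is small, but in fact with $b_R = 0$ the degree bound $\deg_R^-(y) \le a_R + c_R$ for $y \in D_R$ becomes available and, combined with $\delta^0(R) \ge k/8 - 2dk$ applied as an indegree bound, still yields the contradiction unless $c_R$ is of order $k$ — however $c_R$ of order $k$ with $b_R = 0$ would then violate the non-edge degree sum for a vertex of $C_R$ and a vertex of $S$. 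I expect the main (though still modest) obstacle to be handling these boundary cases cleanly so that the $O(d)$ error terms are genuinely absorbed; the bulk of the argument is the elementary counting above.

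Finally, I should remark that only $|B|$ and $|D|$ are asserted in the claim, so I do not need to say anything about $|A| + |C|$ here; that, along with the edge conditions in Definition~\ref{ex-defi}, will be verified in the subsequent argument to conclude that $G$ is $\eta$-extremal. So the proof of this claim ends once the two size estimates are established.
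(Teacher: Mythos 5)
There are two genuine problems with this proposal.

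First, a factual error. For a vertex $x\in D_R$, you claim that "every outneighbour of $x$ lies in $B_R\cup C_R$ (it cannot lie in $A_R\cup D_R=S$)". This is wrong: since $x\in S$, the outneighbours of $x$ lie in $N^+_R(S)=A_R\cup B_R$, and in particular they \emph{can} lie in $A_R$. The parenthetical is true of the \emph{in}neighbours of $x$: because $x\in D_R=S\setminus N^+_R(S)$, no vertex of $S$ sends an edge into $x$, so $N^-_R(x)\subseteq B_R\cup C_R$. You have swapped the two neighbourhoods. The bound you then write, $\deg^+_R(x)\le b_R+c_R$, should be $\deg^+_R(x)\le a_R+b_R$ (and the sharp, useful bound for the indegree is $\deg^-_R(y)\le b_R+c_R$, which you discard in favour of the "trivial" $a_R+b_R+c_R$).

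Second, and more importantly, even after correcting the characterisation the inequalities you collect do not pin down $b_R$ and $d_R$. You have at your disposal $b_R<d_R$, the semidegree lower bound, $a_R+b_R+c_R+d_R=k$, and the Ore inequality for a non-edge inside $D_R$. With the correct bounds this last one gives $(a_R+b_R)+(b_R+c_R)\ge 3k/4-3dk$, i.e.\ $d_R\le b_R+k/4+3dk$. These constraints are satisfied, for example, by $(a_R,b_R,c_R,d_R)=(k/2,\,k/10,\,k/4,\,3k/20)$, which is nowhere near the conclusion. The missing ingredient is a non-edge \emph{between $A_R$ and $C_R$}. Since $C_R\cap N^+_R(S)=\emptyset$ and $A_R\subseteq S$, every pair $(x,y)\in A_R\times C_R$ is a non-edge; choosing $x\in A_R$ of minimum outdegree and $y\in C_R$ of minimum indegree and averaging (using that $R$ is oriented) gives $\deg^+_R(x)+\deg^-_R(y)\le a_R/2+2b_R+c_R/2<k/2+b_R$, and Ore then yields the crucial lower bound $b_R>k/4-3dk$. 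Your sketch never touches a vertex of $A_R$ or $C_R$, so it cannot produce this bound. You also need to use Ore in both directions on a pair in $D_R$ (it is a non-edge both ways, since $D_R$ is independent) to get $\deg_R(x)+\deg_R(y)\ge 3k/2-6dk$, which together with $\deg_R(z)\le k-d_R$ for $z\in D_R$ (valid precisely because $N^+_R(z)$ and $N^-_R(z)$ are disjoint subsets of $A_R\cup B_R\cup C_R$ in an oriented graph) gives $d_R\le k/4+3dk$. Combined with $b_R<d_R$ and $b_R>k/4-3dk$, this closes the squeeze. You are right that the degenerate case $d_R\le 1$ must be ruled out separately, but in the paper this is done first — it is a gatekeeper for applying the Ore condition to a pair in $D_R$ at all — and your handling of it, framed as an afterthought, is too vague to be checked.
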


\begin{proof}
    Let us first prove that $|D_R| \ge 2$ so that we may apply the degree condition (\ref{equ:reduce-dgree}). Suppose that $|D_R| \le 1$. By definition we have $|A_R| + |B_R| < |A_R| + |D_R|$, and hence $|B_R| = 0$, $|D_R| = 1$. (\ref{equ:reduce-semideg}) implies that neither $A_R$ nor $C_R$ are empty, so we may choose two vertices $x\in A_R$ and $y\in C_R$ such that
    \[\frac{3k}{4} - 3dk \le \deg^+_R(x) + \deg^-_R(y) \le \frac{|A_R|}{2} + \frac{|C_R|}{2} \le \frac{k}{2},\]
    which is a contradiction.

    Now we have $|D_R| \ge 2$. Since $D_R$ is an independent set in $R$, for any two distinct vertices $x,y\in D_R$, we have
    \[\frac{3k}{2} - 6dk \le \deg_R(x) + \deg_R(y) \le 2(|A_R| + |B_R| + |C_R|) = 2(k - |D_R|).\]
    And hence
    \[|B_R| < |D_R| \le \frac{k}{4} + 3dk.\]
    Similarly we may choose two vertices $x\in A_R$ and $y\in C_R$ such that
    \[\frac{3k}{4} -3dk \le  \deg^+_R(x) + \deg^-_R(y) \le \frac{|A_R|}{2} + 2|B_R| + \frac{|C_R|}{2} < \frac{k}{2} + |B_R|.\]
    And hence
    \[|B_R| > \frac{k}{4} - 3dk.\]
    Therefore, we have
    \[\frac{k}{4} - 3dk < |B_R| < |D_R| \le \frac{k}{4} + 3dk.\]
    The claim follows after restoration to the original graph.
\end{proof}

The definition directly implies that $e_R(A_R, C_R) =
e_R(A_R, D_R) = e_R(D_R, C_R) = e_R(D_R)=0$. Since $R$ has parameters $(\epsilon^2/3,d)$, Lemma~\ref{lem:reduceR} (1) and (2) imply that we have
\begin{align}
    e(A,C),e(A,D),e(D,C), e(D)\le 3dn^2.\label{equ:edge less}
\end{align}
It remains to prove the following claim.

\begin{cl}\label{cl:2}
\settasks{
  label = (\arabic*), 
  label-width = 1.5em, 
  item-indent = 2em,   
  before-skip = 18pt,   
  after-skip = 6pt     
}
\begin{tasks}(2)
\task $e(A,B) \ge |A||B| -O(\eta)n^2$;\label{cl:2-1}
\task $e(B,C) \ge |B||C| -O(\eta)n^2$; \label{cl:2-2}
\task $e(D,A) \ge |A||D| - O(\eta)n^2$;\label{cl:2-3}
\task $e(C,D) \ge |C||D| - O(\eta)n^2$;\label{cl:2-4}
\task $e(B,D) \ge |A|n/8 - O(\eta)n^2$; \label{cl:2-5}
\task $e(D,B) \ge |C|n/8 - O(\eta)n^2$;\label{cl:2-6}
\task  $e(A) \ge |A|^2/2 - O(\eta)n^2$;\label{cl:2-7}
\task $e(C) \ge |C|^2/2 - O(\eta)n^2$. \label{cl:2-8}
\end{tasks}

\end{cl}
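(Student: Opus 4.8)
The plan is to derive all eight lower bounds from a single tool and the structural facts already established. First I would normalise so that $|A|\le|C|$ (hence $|A|\le n/4+O(\eta)n$ and $|C|\ge n/4-O(\eta)n$ by \cref{cl:1}); the case $|A|>|C|$ is symmetric, via reversing all edges of $G$ and interchanging the roles of $A\leftrightarrow C$ and $B\leftrightarrow D$ (reversal preserves the Ore condition and, with this relabelling, $\eta$-extremality). Note that the two upper-bound conditions of \cref{ex-defi}, namely $e(A,C),e(D)<O(\eta)n^2$, are already supplied by (\ref{equ:edge less}), so only the lower bounds \ref{cl:2-1}--\ref{cl:2-8} remain. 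The tool is the following consequence of the Ore condition: for any nonempty $W\subseteq V(G)$, the complement of $G[W]$ in the complete digraph on $W$ contains a tournament, hence a Hamilton path $w_1\cdots w_{|W|}$; feeding each non-edge $w_iw_{i+1}$ into the Ore condition and summing (using $\deg=\deg^++\deg^-$ in an oriented graph) gives $\sum_{w\in W}\deg(w)\ge\frac{3n-3}{4}(|W|-1)$. I would apply this for $W\in\{D,B,A,C\}$ and compare against the trivial ceilings $2e(W)\le|W|(|W|-1)$ and $e(X,Y)+e(Y,X)\le|X||Y|$; whenever the two sides match up to $O(\eta)n^2$, every summand is forced within $O(\eta)n^2$ of its ceiling.

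For $W=D$ this works at once: since $e(D),e(D,C),e(A,D)\le 3dn^2$ by (\ref{equ:edge less}), expanding $\sum_{y\in D}\deg(y)=2e(D)+e(D,A)+e(D,B)+e(D,C)+e(A,D)+e(B,D)+e(C,D)$ gives $e(D,A)+e(C,D)+\big(e(B,D)+e(D,B)\big)\ge\frac{3n}{4}|D|-O(\eta)n^2$, while this sum is at most $|D|(|A|+|B|+|C|)=|D|(n-|D|)=\frac{3n}{4}|D|\pm O(\eta)n^2$ by \cref{cl:1}. This yields \ref{cl:2-3}, \ref{cl:2-4}, and $e(B,D)+e(D,B)\ge|B||D|-O(\eta)n^2$, so the underlying bipartite graph between $B$ and $D$ is almost complete. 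To run the analogous argument at $B$, $A$, $C$ I first need $e(B,A),e(C,A),e(C,B)$ and $e(B)$ all $O(\eta)n^2$ -- this is the heart of the proof. I would prove it in the reduced oriented graph $R$, where by construction $e_R(A_R,C_R)=e_R(A_R,D_R)=e_R(D_R,C_R)=e_R(D_R)=0$, $\delta^0(R)\ge k/8-O(dk)$, and $\deg^+_R(x)+\deg^-_R(y)\ge\frac{3k}{4}-O(dk)$ whenever $xy\notin E(R)$. Since $D_R$ is independent and every out-edge of $A_R\cup D_R$ lands in $A_R\cup B_R$, one has $\deg^+_R(x)\le|A_R|+|B_R|$ for $x\in A_R$ and $\deg^-_R(y)\le|B_R|+|C_R|$ for $y\in C_R\cup D_R$; as every pair in $(A_R\cup D_R)\times(C_R\cup D_R)$ is a non-edge, the Ore condition on $R$ together with these ceilings (and the symmetric accounting of $\deg^-_R$ at $A_R$ and $\deg^+_R$ at $C_R$) pins the backward densities in $R$ to $O(d)$; then \cref{lem:reduceR}(1),(2), after absorbing the $O(dn)$ vertices of $X\cup V_0$ placed into $A$, gives the four estimates. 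The subtlety is that Ore only makes these degree inequalities tight for \emph{most} vertices, so one must choose the Hall violator $S$ with maximum deficiency -- and maximal with that property -- to exploit its K\"onig-type matching structure and upgrade ``most'' to ``all but $O(dn)$''.

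Granting the backward sets are small, $W=B$ yields \ref{cl:2-1}, \ref{cl:2-2} exactly as $W=D$ did (and reconfirms $e(B,D)+e(D,B)\ge|B||D|-O(\eta)n^2$); and $W=A$ gives $2e(A)\ge\frac{3n-3}{4}(|A|-1)-e(A,B)-e(D,A)-O(\eta)n^2\ge\frac{3n-3}{4}(|A|-1)-|A|(|B|+|D|)-O(\eta)n^2=\frac{n}{4}|A|-O(\eta)n^2\ge|A|^2-O(\eta)n^2$, the last step using $|A|\le n/4+O(\eta)n$; that is \ref{cl:2-7}. For $W=C$ the symmetric bound is too weak when $|C|$ is near $n/2$, so instead I would show $\sum_{x\in C}\deg(x)\ge|C|(n-1-|A|)-O(\eta)n^2$: a typical $x\in C$ has $\deg^+_C(x)+\deg^-_C(x)\le|C|-1$, has $\ge|A|-O(\sqrt\eta n)$ non-neighbours inside $A$ (since $e(A,C),e(C,A)$ are small), and -- playing Ore against such a non-neighbour $z\in A$, whose degree is confined to $A\cup B$ out and $A\cup D$ in by the previous steps -- has $\deg^+(x),\deg^-(x)\ge|C|-O(\sqrt\eta n)$; comparing with the ceiling $2e(C)+e(C,D)+e(B,C)\le|C|(n-1-|A|)$ then gives \ref{cl:2-8}. (If $A$ is too small to supply non-neighbours one treats it as exceptional and argues as for $A=\emptyset$.) Lastly, for \ref{cl:2-5} and \ref{cl:2-6} I would split the almost-complete $B$--$D$ bipartite graph: for non-adjacent $b,b'\in B$ (most pairs, as $e(B)$ is small), the near-completeness of $e(A,B)$ and smallness of $e(B,A),e(C,B),e(B)$ give $\deg^-(b')\ge|A|+|N^-_D(b')|-O(\sqrt\eta n)$ and $\deg^+(b)\le|C|+|N^+_D(b)|+O(\sqrt\eta n)$, so Ore and $|A|+|C|=n/2\pm O(\eta)n$ force $|N^+_D(b)|+|N^-_D(b')|\ge|D|-O(\sqrt\eta n)$; hence all $|N^+_D(b)|$ agree up to $O(\sqrt\eta n)$ with a common value $p$, and a symmetric computation on the $D$-side (using \ref{cl:2-3}) pins $p=|A|/2\pm O(\sqrt\eta n)$, giving $e(B,D)=|A|n/8\pm O(\eta)n^2$ and $e(D,B)=|C|n/8\pm O(\eta)n^2$.

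The hard part will be the rigidity step: the Ore condition only makes the degree inequalities tight on average, so pushing through genuine $O(\eta)n^2$ control of the backward edge sets $e(B,A),e(C,A),e(C,B),e(B)$ forces one through the K\"onig/matching structure of a carefully chosen Hall violator in $R$. A secondary difficulty is the asymmetry imposed by $|A|\le|C|$: when $|A|=o(n)$ the symmetric treatments of $e(C)$ and of the $B$--$D$ split both degenerate and must be replaced by a separate near-$A=\emptyset$ analysis.
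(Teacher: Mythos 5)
Your $W=D$ step is sound: the Hamilton-path-in-the-complement argument gives $\sum_{y\in D}\deg(y)\ge\frac{3n-3}{4}(|D|-1)$, which with~(\ref{equ:edge less}) and \cref{cl:1} squeezes $e(D,A)$, $e(C,D)$ and $e(B,D)+e(D,B)$ against their ceilings; the paper reaches the same point by the equivalent route of summing Ore over ordered non-adjacent pairs in $D\times D$. The gap is the step you flag as the heart of the proof: establishing $e(B,A),e(C,A),e(C,B),e(B)=O(\eta)n^2$ so that you can rerun the complement-tournament trick at $W=B$ and $W=A$. That detour is both unnecessary and, as sketched, incorrect. It is unnecessary because the paper never takes a Hamilton path in $\overline{G}[B]$ or $\overline{G}[A]$; instead it sums the Ore inequality over ordered non-adjacent pairs in $A\times D$ and in $D\times C$, where all but $O(d)n^2$ of the pairs are non-adjacent by~(\ref{equ:edge less}). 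Those sums only involve $\sum_{x\in A}\deg^+(x)$, $\sum_{y\in D}\deg^-(y)$, $\sum_{x\in D}\deg^+(x)$, $\sum_{y\in C}\deg^-(y)$, each dominated via~(\ref{equ:edge less}) by $e(A)+e(A,B)$, $e(B,D)+e(C,D)$, $e(D,A)+e(D,B)$, $e(C)+e(B,C)$ up to $O(d)n^2$ --- none of $e(B,A),e(C,A),e(C,B),e(B)$ ever appears. A weighted combination of the two resulting inequalities, divided by $|A||C||D|$, then pins $e(A),e(A,B),e(C),e(B,C)$ against their ceilings simultaneously when $|A|,|C|\ge\eta n$ (the degenerate cases using just one of the two inequalities), and the $A\times D$ inequality alone already yields \ref{cl:2-5}.

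The sketch you give for the backward-edge bounds also does not go through. For $x\in A_R\cup D_R$ and $y\in C_R\cup D_R$ with $xy\notin E(R)$, the ceilings you cite give $\deg^+_R(x)+\deg^-_R(y)\le(|A_R|+|B_R|)+(|B_R|+|C_R|)$, which by \cref{cl:1} is about $k$, whereas the Ore lower bound is $\frac{3k}{4}-O(dk)$. That leaves slack of roughly $k/4$, so the Ore condition plus these one-sided ceilings pins nothing about $e_R(B_R,A_R)$, $e_R(C_R,A_R)$, $e_R(C_R,B_R)$ or $e_R(B_R)$; the appeal to a ``K\"onig-type matching structure of a carefully chosen Hall violator'' is left unexplained and there is no obvious way it closes the $k/4$ gap. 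Since your arguments for \ref{cl:2-1}, \ref{cl:2-2}, \ref{cl:2-7}, your alternative route to \ref{cl:2-8}, and your $B$--$D$ split for \ref{cl:2-5}--\ref{cl:2-6} all rely on this unestablished smallness, the proposal as written does not prove the claim.
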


\begin{proof}
    Inequalities \ref{cl:2-1}, \ref{cl:2-3}, \ref{cl:2-5} and \ref{cl:2-7} are trivially valid whenever $|A| = O(\eta) n$. Let us prove these four inequalities under the assumption that $|A| \ge \eta n$. The key point of proof is to make use of those vertices whose outdegree or indegree is close to a minimal extremal example. So in the following proof, we will use the outdegree of $A\cup D$ and the indegree of $C\cup D$. Consider all pairs of vertices $(x,y)$ with $x\in A$, $y\in D$ and $xy\notin E(G)$, we have
    \begin{align*}
        \sum_{\substack{x\in A,y\in D\\xy\notin E(G)}} \deg^+(x) + \deg^-(y) \ge \frac{3n-3}{4} (|A||D| - e(A,D)) \ge \frac{3n}{4}|A||D| - 3dn^3,
    \end{align*}
    and
    \begin{align*}
        \sum_{\substack{x\in A,y\in D\\xy\notin E(G)}} \deg^+(x) + \deg^-(y) &\le |D| \sum_{x\in A} \deg^+(x) + |A| \sum_{y\in D} \deg^-(y)\\
        &\le |D| (e(A) + e(A,B)) + |A| (e(B,D) + e(C,D)) + 6dn^3.
    \end{align*}
    It follows that
    \begin{align}
        \frac{3n}{4} |A||D| \le |D| (e(A) + e(A,B)) + |A| (e(B,D) + e(C,D)) + 9dn^3.\label{equ:5.3ad}
    \end{align}
    Similarly we have
    \begin{align}
        \frac{3n}{4} |C||D| \le |C| (e(D,A) + e(D,B)) + |D| (e(C) + e(B,C)) + 9dn^3,\label{equ:5.3dc}
    \end{align}
    and
    \begin{align}
        \frac{3n}{4} |D|^2 \le |D| (e(D,A) + |B||D| + e(C,D)) + 6dn^3.\label{equ:5.3dd}
    \end{align}
    We may combine (\ref{equ:5.3ad}) and (\ref{equ:5.3dc}) and get
    \begin{align}
        \nonumber \frac{3n}{2} |A||C||D| &\le |A||C| (e(D,A) + |B||D| + e(C,D))\\
        &~~+ |A||D| (e(C) + e(B,C)) + |C||D| (e(A) + e(A,B)) + 18dn^4.\label{equ:5.3adc}
    \end{align}

    We first prove \ref{cl:2-1} and \ref{cl:2-7}. If $|C| \ge \eta n$, then by (\ref{equ:5.3adc}) we have
    \[\frac{3n}{2} \le (|A| + |B| + |C|) + \left(\frac{|C|}{2} + |B|\right) + \frac{e(A) + e(A,B)}{|A|} + O(\eta)n,\]
    that is
    \[\frac{n}{2} \le \frac{|C|}{2} + \frac{e(A) + e(A,B)}{|A|} + O(\eta)n.\]
    On the other hand, it trivially holds that $e(A) \le |A|^2/2$ and $e(A,B) \le |A||B|$. Hence noting that $|A| + |C| = n/2 \pm O(d) n$ and $|B| = n/4 \pm O(d) n$ we must have
    \[e(A) \ge \frac{|A|^2}{2} - O(\eta)n^2,~~~e(A,B) \ge |A||B| - O(\eta)n^2.\]
    If $|C| < \eta n$, then by (\ref{equ:5.3ad}) we have
    \[\frac{3n}{4} \le |B| + \frac{e(A) + e(A,B)}{|A|} + O(d) n.\]
    Similarly, considering the size of $A,B,D$ we can prove \ref{cl:2-1} and \ref{cl:2-7}.

    To prove \ref{cl:2-3}, by (\ref{equ:5.3dd}) we can get
    \[\frac{3n}{4} \le |B| + |C| + \frac{e(D,A)}{|D|} + O(d) n.\]
    Since $|A| + |B| + |C| \le 3n/4 + O(d) n$, we have
    \[e(D,A) \ge |A||D| - O(d)n^2 \ge |A||D| - O(\eta)n^2.\]

    To prove \ref{cl:2-5}, by (\ref{equ:5.3ad}) we can get
    \[\frac{3n}{4} \le \frac{|A|}{2} + |B| + |C| + \frac{e(B,D)}{|D|} + O(\eta) n.\]
    Hence we have
    \[e(B,D) \ge \frac{|A||D|}{2} - O(\eta)n^2 \ge \frac{|A|n}{8} - O(\eta)n^2.\]

    By symmetry we can prove \ref{cl:2-2}, \ref{cl:2-4}, \ref{cl:2-6} and \ref{cl:2-8} with respect to $|C|$ via a similar argument. Hence the claim follows.
\end{proof}

Therefore, $G$ is $\eta$-extremal by our choice of $d\ll \eta$.
\end{proof}

\section{Extremal case: proof of Theorem~\ref{thm:extremal}}\label{section-extremal}
The main idea of this proof is that of \cite{keevash2009exact}, as following. By exploiting Ore-degree condition, we first demonstrate that $G$ must exhibit a structure analogous to the extremal case described in Section~\ref{sec:extremal}. Through a sequence of further claims, we then either find a Hamilton cycle directly or deduce additional structural constraints that force $G$ to align even more precisely with the extremal example. This leads to a contradiction, since the extremal example inherently fails to satisfy Ore-degree condition while $G$ must satisfy this condition by hypothesis.

The proofs need several modifications due to the difference of degree condition and consequently the more general structure of extremal graphs. One significant thing is that the order of $A$ does make sense. More precisely, for instance, we have $e(A,B) > |A||B| - O(\eta)n^2$, that all but a tiny proportion of possible edges from $A$ to $B$ must exist. However, the $O(\eta)n^2$-term cannot be ignored whenever $A$ contains only a few vertices. We will use a appropriate regime, in the following proof, to determine whether $A$ is sufficiently large or not.

By symmetry we may assume that $|A| \le |C|$, since one can reverse the direction of all edges. Let $\eta_1,\mu,\lambda$ be additional constants such that
\[1/n \ll \eta\ll \eta_1 \ll \mu \ll \lambda \ll 1.\]
We split the proof into two cases with respect to the order of $A$.

\subsection{When $|A| \ge \sqrt{\mu}\cdot n$}
\label{subsec:7.1}
A simple observation shows that, in the non-degenerate case where each partition is sufficiently large, the extremal structure is essentially close to that of~\cite{keevash2009exact}. Hence, it is expected that most of the analysis would be analogized to Ore-degree condition. More specifically, we refer to the same concept of \emph{cyclic}, \emph{acceptable}, and \emph{good} vertices. We first prove Claims~\ref{cl:cyclic} and~\ref{cl:mindeg-ac} using slightly numerical computing. Claims~\ref{cl:3} and~\ref{cl:6} state some properties of different types of vertices, whose proofs can be achieved simply by adjusting some relative parameters. \cref{cl:4} provides a way to find a Hamilton cycle whenever $|B| = |D|$, while~\cref{cl:7} characterizes the edges that are not expected to appear in extremal graphs. In the proof of these two claims, we overcome the main difference of degree conditions and extremal structures by some structural or numerical analysis. For clarity and completeness, the proofs of~\cref{cl:3} and~\ref{cl:6} will be omitted and put in Appendix.

To facilitate our analysis, we define the notation $A\coloneqq P(1)$, $B\coloneqq P(2)$, $C\coloneqq P(3)$, $D\coloneqq P(4)$.
For a vertex $x\in G$, we introduce the compact notation $x$ has property $W:A_{*}^{*}B_{*}^{*}C_{*}^{*}D_{*}^{*}$ as follows. 
The notation starts with some $W\in \{A,B,C,D\}$, namely the set that $x$ belongs to.
Next, for each of $A,B,C,D$, the superscript symbol refers to the intersection with $N^{+}(x)$ and the subscript to the intersection with $N^{-}(x)$. 
Specifically, for $Z\in \{A,B,C,D\}$, a symbol `$>\alpha$' denotes the intersection of size at least $\alpha |Z|-O(\eta_1)n$, while a symbol `$<\alpha$' denotes the intersection of size at most $\alpha |Z|+O(\eta_1)n$.
The absence of a symbol means that there is no restriction on the intersection,
and we can omit any of $A,B,C,D$ if it has no superscript or subscript. 
For instance, to say $x$ has property $B: A^{>1}C^{<1/2}_{<1/3}$ means that $x\in B$, $|N^+(x)\cap A|>|A|-O(\eta_1)n$, $|N^+(x)\cap C|<|C|/2+O(\eta_1)n$ and $|N^-(x)\cap C|<|C|/3+O(\eta_1)n$.

We now give the definitions of cyclic and acceptable for a vertex in $G$. We say that a vertex $x$ is \emph{cyclic} if it satisfies 
$$P(i): P(i+1)^{>1}P(i-1)_{>1}$$
for some $i\le 4$ (counting modulo $4$). The following fact implies that $G$ has at most $O(\eta_1)n$ non-cyclic vertices.
\begin{claim}\label{cl:cyclic}
    The number of non-cyclic vertices in $G$ is at most $O(\eta_1)n$.
\end{claim}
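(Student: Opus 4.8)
The plan is to read the claim off directly from the near-completeness of the four dense bipartite pairs $A\to B$, $B\to C$, $C\to D$, $D\to A$ supplied by the extremal structure. First note that, writing $x\in P(i)$, being cyclic means exactly that $|N^+(x)\cap P(i+1)|\ge|P(i+1)|-O(\eta_1)n$ and $|N^-(x)\cap P(i-1)|\ge|P(i-1)|-O(\eta_1)n$; since $P(1),\dots,P(4)$ are pairwise disjoint the index $i$ is forced, so a vertex is non-cyclic iff it fails at least one of these two inequalities for the unique part containing it. We may also assume $P(1)\cup\dots\cup P(4)=V(G)$, since Definition~\ref{ex-defi} forces the parts to cover all but $O(\eta)n$ vertices and any remaining vertex belongs to no $P(i)$, hence is non-cyclic and contributes only $O(\eta)n\le O(\eta_1)n$.

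Next I would run a short averaging argument on each ordered pair $(X,Y)\in\{(A,B),(B,C),(C,D),(D,A)\}$. By Definition~\ref{ex-defi} the deficit $|X||Y|-e(X,Y)$ is $O(\eta)n^2$. Writing it as $\sum_{x\in X}\bigl(|Y|-|N^+(x)\cap Y|\bigr)$ with nonnegative summands, any $x\in X$ with $|N^+(x)\cap Y|<|Y|-O(\eta_1)n$ contributes at least a constant times $\eta_1 n$ to this sum, so there are at most $O(\eta/\eta_1)\,n$ such $x$; symmetrically, writing the same deficit as $\sum_{y\in Y}\bigl(|X|-|N^-(y)\cap X|\bigr)$ bounds by $O(\eta/\eta_1)\,n$ the number of $y\in Y$ with $|N^-(y)\cap X|<|X|-O(\eta_1)n$. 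This step uses nothing about the sizes of the parts --- in particular not the hypothesis $|A|\ge\sqrt{\mu}\,n$ of this subsection --- because the slack $O(\eta_1)n$ in the definition of cyclic is an absolute quantity rather than a proportion of the relevant part.

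Finally I would assemble the count. A non-cyclic vertex of $A$ must fail $|N^+(\cdot)\cap B|\ge|B|-O(\eta_1)n$ or $|N^-(\cdot)\cap D|\ge|D|-O(\eta_1)n$; a non-cyclic vertex of $B$ must fail $|N^+(\cdot)\cap C|\ge|C|-O(\eta_1)n$ or $|N^-(\cdot)\cap A|\ge|A|-O(\eta_1)n$; and analogously for $C$ (via $D$ and $B$) and $D$ (via $A$ and $C$). Each of these eight exceptional classes has size $O(\eta/\eta_1)\,n$ by the previous paragraph, so all non-cyclic vertices, together with the $O(\eta)n$ leftover vertices, number $O(\eta/\eta_1)\,n$, which is $O(\eta_1)n$ since $\eta\ll\eta_1$ (we may take $\eta\le\eta_1^2$). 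I do not expect a genuine obstacle here: the statement is a routine consequence of the extremal structure, and the only point requiring care is the constant bookkeeping --- choosing the implicit constant in the threshold $O(\eta_1)n$ compatibly with the property notation, and checking that an $O(\eta)n^2$ edge-deficit divided by a per-vertex slack of order $\eta_1 n$ is genuinely of order $\eta_1 n$, which is exactly what the hierarchy $\eta\ll\eta_1$ provides.
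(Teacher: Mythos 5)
Your proposal is correct and follows essentially the same double-counting/averaging argument as the paper: bound the edge deficit $|X||Y|-e(X,Y)=O(\eta)n^2$ for each of the four dense pairs, note that each vertex failing the relevant degree condition contributes $\Omega(\eta_1)n$ to the deficit, and conclude there are $O(\eta/\eta_1)n=O(\eta_1)n$ such vertices. You are in fact slightly more careful than the paper's terse write-up, which folds all non-cyclic vertices of $B$ into a single count against $e(B,C)$ even though a vertex may be non-cyclic only because of its in-degree from $A$; your separate treatment of the out-degree and in-degree failure modes (using $e(B,C)$ and $e(A,B)$ respectively), together with the explicit disposal of the $O(\eta)n$ vertices lying outside $A\cup B\cup C\cup D$, is exactly the bookkeeping the paper's ``by the same argument'' is implicitly invoking.
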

\begin{proof}
    Let $t$ be the number of non-cyclic vertices in $B$. According to the definition of cyclic vertices, we have 

   \begin{align}
    e(B,C)=\sum_{\substack{v\in B\\v~\text{is cyclic}}}\deg_C^+(v)+\sum_{\substack{v\in B\\v~\text{is non-cyclic}}}\deg_C^+(v)
    &\le (|B|-t)|C|+t(|C|-\Omega(\eta_1)n).\nonumber
\end{align}
On the other hand, $e(B,C)\ge |B||C|-O(\eta)n^2$ since $G$ is $\eta$-extremal. Then we get $t=O(\eta_1)n$. By the same argument, the number of non-cyclic vertices in $A$, $C$, $D$ is at most $O(\eta_1)n$ respectively. This concludes the proof.
\end{proof}

In the following claim, we will show that the subgraph induced by $A$ or $C$ is quite close to a regular tournament. We would like to mention that the same result trivially holds for the Dirac-type case.

\begin{claim}\label{cl:mindeg-ac}
    For any cyclic vertex $x\in A$, it holds that
    \[\deg^+_{A}(x) \ge \frac{|A|}{2} - O(\eta_1)n.\]
    For any cyclic vertex $x\in C$, it holds that
    \[\deg^-_{C}(x) \ge \frac{|C|}{2} - O(\eta_1)n.\]
\end{claim}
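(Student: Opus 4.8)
The plan is to exploit the Ore-degree condition applied to pairs inside $A$ (respectively $C$), using the fact that $\overline{G}[A]$ contains a tournament, together with the extremal edge-count bounds from the definition of $\eta$-extremality to control the ``external'' degrees of vertices in $A$. First I would fix a cyclic vertex $x \in A$; by definition $x$ satisfies $A\colon B^{>1}_{\phantom{>1}}D_{>1}$, so $|N^+(x)\cap B| > |B| - O(\eta_1)n$ and $|N^-(x)\cap D| > |D| - O(\eta_1)n$. Since $G$ is an oriented graph, the complement $\overline{G}[A\setminus\{x\}]$ of the tournament on $A$ contains a tournament on $|A|-1$ vertices, hence a Hamilton path $v_1 v_2 \cdots v_{|A|-1}$ in $\overline{G}[A\setminus\{x\}]$ consisting of non-edges of $G$. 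For each consecutive pair $v_i v_{i+1}$ we have $v_i v_{i+1} \notin E(G)$, so Ore-degree condition gives $\deg^+(v_i) + \deg^-(v_{i+1}) \geq (3n-3)/4$. Summing over $i$ and observing that $v_1,\dots,v_{|A|-1}$ each appear at most once as a ``$+$'' term and at most once as a ``$-$'' term, we obtain
\[
\frac{3n-3}{4}(|A|-2) \le \sum_{i=1}^{|A|-2}\bigl(\deg^+(v_i)+\deg^-(v_{i+1})\bigr) \le \sum_{w\in A\setminus\{x\}} \deg(w).
\]

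The next step is to bound $\sum_{w\in A}\deg(w)$ from above using the extremal structure. Writing $\deg(w) = \deg^+_A(w)+\deg^-_A(w) + \deg^+(w,B) + \deg^-(w,B) + \deg(w, C) + \deg(w,D) + \deg(w,V_0\cup X)$ and summing over $w\in A$: the internal term contributes exactly $2e(A) \le |A|^2$; the $A$--$B$ interaction contributes at most $2|A||B|$ (but in fact the reverse direction $e(B,A)$ is tiny, so essentially $|A||B|$); by the extremal conditions $e(A,C)$, $e(A,D)$ (along with $e(C,A)$, and $e(D,A)$ is bounded by $|A||D|$) are controlled, with $e(A,C) < O(\eta)n^2$; and the exceptional vertices contribute $O(d)n^2 = O(\eta)n^2$. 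After a short computation one gets $\sum_{w\in A}\deg(w) \le |A|^2 + |A||B| + |A||D| + O(\eta_1)n^2$. Wait --- this needs care: I should instead bound $2e(A)+e(A,B)+e(B,A)+e(A,D)+e(D,A)+e(A,C)+e(C,A) \le |A|(|A|-1) + 2|A||B| \cdot(\text{no, } e(B,A)\text{ small}) + \ldots$. The cleanest route: use that $\deg^-(v_{i+1})$ counts inedges, and split the sum so that for the ``$+$'' side we use $\deg^+(w) \le \deg^+_A(w) + |B| + (\text{small})$ because $e(A,C)$ is tiny and $e(A,D)\le|A||D|$ but we can afford to keep $|D|$, while for the ``$-$'' side $\deg^-(w) \le \deg^-_A(w) + |D| + (\text{small})$ since $e(B,A)$ is tiny.

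Carrying this out: $\sum_{i=1}^{|A|-2}\deg^+(v_i) \le e(A) + (|A|-2)|B| + (|A|-2)|D| + O(\eta)n^2$ --- no, I want to keep only the relevant extremal-direction terms. The point is that the extremal conditions force $e(C,A), e(B,A), e(A,C)$ all to be $O(\eta)n^2$ (the first two because $e(A,B), e(B,C)$ etc.\ are nearly complete and $G$ is oriented; $e(A,C)$ directly), so $\deg^+(w) \le \deg^+_A(w) + |N^+(w)\cap B| + |N^+(w)\cap D| + O(\eta_1)n$ and similarly $\deg^-(w) \le \deg^-_A(w) + |N^-(w)\cap D| + |N^-(w)\cap C| + O(\eta_1)n$, but $e(C,A)$ small kills one of these. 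Combining with $|A|+|C| = (1/2\pm O(\eta))n$, $|B|,|D| = (1/4\pm O(\eta))n$, and $|A|\le|C|$ so $|A|\le (1/4+O(\eta))n$, the upper bound becomes roughly $2e(A) + |A||B| + |A||D| + O(\eta_1)n^2 \le |A|^2 + |A|\cdot\tfrac{n}{2} + O(\eta_1)n^2$ (using $|B|+|D| \le n/2 + O(\eta)n$). Then the inequality $\frac{3n}{4}|A| \le |A|^2 + \frac{n}{2}|A| + O(\eta_1)n^2$ would give $|A|^2 \ge \frac{n}{4}|A| - O(\eta_1)n^2$, which is the wrong direction --- so the real work is to instead localize to the single vertex $x$.

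The correct and cleaner approach: apply Ore-degree condition directly to the $|A|-1$ pairs $(x, v_i)$ if $x v_i \notin E(G)$, or rather use that in $\overline{G}[A]$ vertex $x$ has out-degree at least $(|A|-1)/2$ or in-degree at least $(|A|-1)/2$ (a tournament vertex), say $\overline{\deg}^+(x) \ge (|A|-1)/2$ in the complement, meaning $x$ has at least $(|A|-1)/2$ non-neighbors $w$ in $A$ with $xw\notin E(G)$; for each such $w$, Ore-degree gives $\deg^+(x) + \deg^-(w) \ge (3n-3)/4$, hence $\deg^-(w) \ge (3n-3)/4 - \deg^+(x)$. Then $\sum_w \deg^-(w) \ge \frac{|A|-1}{2}\bigl((3n-3)/4 - \deg^+(x)\bigr)$ over those $w$, and since each such $w\in A$ has $\deg^-(w) \le \deg^-_A(w) + |D| + e(C,A)/1 + O(\eta_1)n \le \deg^-_A(w) + |D| + O(\eta_1)n$ (as $e(C,A), e(B,A)$ are $O(\eta)n^2$), summing yields $\sum_{w\in A}\deg^-_A(w) + \frac{|A|}{2}|D| + O(\eta_1)n^2 \ge \frac{|A|}{2}\bigl((3n-3)/4 - \deg^+(x)\bigr)$. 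Now $\sum_{w\in A}\deg^-_A(w) = e(A) \le \binom{|A|}{2}$, and $\deg^+(x) \le \deg^+_A(x) + |B| + O(\eta_1)n$ (since $e(A,C), e(A,D)$... wait $e(A,D)$ can be large). Hmm: $\deg^+(x) \le \deg^+_A(x) + |B| + |N^+(x)\cap D| + |N^+(x)\cap C| + O(\eta_1)n$; but $e(A,C) < O(\eta)n^2$ so $|N^+(x)\cap C|$ is small only on average, not pointwise. The honest fix is to apply Ore-degree condition in the \emph{other} orientation too: the hard part is precisely handling both $\deg^+(x)$ and the ambient structure simultaneously, and I expect the main obstacle to be ruling out that $x$ has many out-neighbors in $D$ and few in $A$ --- this is resolved by noting $x$ is cyclic so $N^-(x)\supseteq D$ almost entirely, forcing $N^+(x)\cap D$ to be small since $G$ has no $2$-cycles, i.e.\ $|N^+(x)\cap D| \le |D| - |N^-(x)\cap D| = O(\eta_1)n$. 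With that, $\deg^+(x) \le \deg^+_A(x) + |B| + O(\eta_1)n$ and (using $|N^+(x)\cap C|$ small on average, absorbed into the $O(\eta_1)n$ after a counting argument over all cyclic $x\in A$, or via $e(A,C)<O(\eta)n^2$), and the inequality becomes
\[
\binom{|A|}{2} + \frac{|A|}{2}|D| + O(\eta_1)n^2 \ge \frac{|A|}{2}\left(\frac{3n-3}{4} - \deg^+_A(x) - |B|\right),
\]
so $\deg^+_A(x) \ge \frac{3n-3}{4} - |B| - |D| - (|A|-1) + O(\eta_1)n = \frac{3n}{4} - \frac{n}{2} - |A| + O(\eta_1)n = \frac{n}{4} - |A| + O(\eta_1)n$. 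Since $|A| + |C| = n/2 \pm O(\eta)n$ and $|A|\le|C|$ this is $\ge \frac{|A|+|C|}{2} - |A| + O(\eta_1)n = \frac{|C|-|A|}{2} + O(\eta_1)n \ge 0 + O(\eta_1)n$, which is not quite $|A|/2$ --- indicating one should instead bound using the \emph{worst} extremal case $|A| = n/4$; when $|A|$ is large ($\ge \sqrt{\mu}n$) and $|A|+|C|\approx n/2$, if $|A|<|C|$ we get even better internal degree, and the bound $\deg^+_A(x)\ge \frac{n}{4}-|A|+|A|/2+O(\eta_1)n$... Let me not belabor this: the genuine content is the Ore-degree double-counting plus the no-$2$-cycle observation killing $N^+(x)\cap D$, and I would carefully track constants to land on $\deg^+_A(x) \ge |A|/2 - O(\eta_1)n$, with the symmetric argument (reversing all edges and swapping $A\leftrightarrow C$) giving the statement for $C$. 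The main obstacle, as flagged, is controlling the cross-edges $e(A,C)$, $e(A,D)$ at the level of the individual vertex $x$ rather than on average; cyclicity of $x$ (via $N^-(x)\supseteq D$ and no $2$-cycles) is the key device that makes this work, and a small averaging argument handles $e(A,C)$.
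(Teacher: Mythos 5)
Your sketched argument does not close, and you flag this yourself: the ``cleaner approach'' of applying the Ore-degree condition to pairs $(x,w)$ with $w\in A$ (via a Hamilton path or tournament inside $\overline{G}[A]$) produces, after the correct cyclicity/no-$2$-cycle observation that kills $N^+(x)\cap D$, a bound of order $\deg^+_A(x)\gtrsim n/4-|A|-O(\eta_1)n$. That is strictly weaker than the target $|A|/2-O(\eta_1)n$ once $|A|>n/6$, and in the regime of Section~\ref{subsec:7.1} one has $\sqrt\mu\, n\le|A|\le|C|\le n/4+O(\eta)n$, so the shortfall is real. This is not a matter of tracking constants more carefully: even the sharper variant that uses all $|A|-1-\deg^+_A(x)$ non-out-neighbors of $x$ inside $A$ (rather than just $(|A|-1)/2$ of them) still lands below $|A|/2$ when $|A|\approx n/4$. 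The pairing inside $A$ simply cannot see the asymmetry in the extremal structure that generates the $|A|/2$ term.

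The missing idea is to feed a \emph{different} set of pairs into the Ore-degree condition: pair the cyclic $x\in A$ against the vertices of $D$. Since $x$ is cyclic, $|N^-(x)\cap D|\ge|D|-O(\eta_1)n$, so orientedness forces $xy\notin E(G)$ for all but $O(\eta_1)n$ vertices $y\in D$ --- this is the device you correctly identify, but the payoff is in summing $\deg^+(x)+\deg^-(y)\ge(3n-3)/4$ over these $y$ and then bounding $\sum_{y\in D}\deg^-(y)$ via the extremal structure. There $e(A,D),e(D)=O(\eta)n^2$, $e(C,D)\le|C||D|\approx(n/2-|A|)\cdot n/4$, and, crucially, $e(B,D)\le|B||D|-e(D,B)\le|A|n/8+O(\eta)n^2$ because $e(D,B)>|C|n/8-O(\eta)n^2$ and $G$ is oriented. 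Adding these gives $\sum_{y\in D}\deg^-(y)\le(n^2-|A|n)/8+O(\eta_1)n^2$, and plugging into the Ore sum yields $\deg^+(x)\ge n/4+|A|/2-O(\eta_1)n$, from which $\deg^+_A(x)\ge|A|/2-O(\eta_1)n$ follows (using $\deg^+_B(x)\le|B|\approx n/4$ and that $\deg^+_D(x)$ is $O(\eta_1)n$ by cyclicity). The $|A|/2$ gain is exactly the cancellation between the large $e(C,D)$ and the small $e(B,D)\approx|A|n/8$; pairing inside $A$ discards this cancellation entirely.
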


\begin{proof}
    By symmetry it suffices to prove the case for $A$. Since $x$ is cyclic, the number of vertices $y\in D$ with $xy\notin E(G_2)$ is at least $|D| - O(\eta_1)n$. We have
    \[\sum_{y\in D,xy\notin E(G)} \deg^+(x) + \deg^-(y) \ge (|D| - O(\eta_1)n) \cdot \frac{3n-3}{4}.\]
    For the left side we have
    \begin{align*}
        \sum_{y\in D,xy\notin E(G)} \deg^+(x) + \deg^-(y) &\le |D| \cdot \deg^+(x) + \sum_{y\in D} \deg^-(y)\\
        &\le |D| \cdot \deg^+(x) + \frac{n^2-|A|n}{8} + O(\eta_1)n^2.
    \end{align*}
    Hence we have
    \[\deg^+(x) \ge \frac{n}{4} + \frac{|A|}{2} - O(\eta_1)n,\]
    and so
    \[\deg^+_{A}(x) \ge \frac{|A|}{2} - O(\eta_1)n.\]
\end{proof}

We say that a vertex is \emph{acceptable} if it has one of the following properties:
\begin{itemize}
        \item $A: B^{>\mu}D_{>\mu}$, $A: A^{>\mu}D_{>\mu}$, $A: A_{>\mu}B^{>\mu}$, $A: A_{>\mu}^{>\mu}$;
        \item $B: A_{>\mu}C^{>\mu}$, $B: A_{>\mu}D^{>\mu}$, $B: C^{>\mu}D_{>\mu}$, $B: D_{>\mu}^{>\mu}$;
        \item $C: B_{>\mu}D^{>\mu}$, $C: B_{>\mu}C^{>\mu}$, $C: C_{>\mu}D^{>\mu}$, $C: C_{>\mu}^{>\mu}$;
        \item $D: A^{>\mu}C_{>\mu}$, $D: A^{>\mu}B_{>\mu}$, $D: B^{>\mu}C_{>\mu}$, $D: B_{>\mu}^{>\mu}$.
\end{itemize}

In other words, a vertex is acceptable if it has a significant outneighborhood in one of its two out-classes and a significant inneighborhood in one of its two in-classes, where `out-classes'
and `in-classes' are to be understood with reference to the extremal oriented graph in Section~\ref{sec:extremal}.
For example, the out-classes of $A$ are $A$ and $B$, and its in-classes are $A$ and $D$. 
We will also call an edge \emph{acceptable} if it is of the type allowed in the extremal oriented graph (so, for example, an edge from $A$ to $B$ is acceptable but an edge from $B$ to $A$ is not). Note that a cyclic vertex
is also acceptable.

In what follows, we will carry out $O(\eta_1)n$ reassignments of vertices between the sets $A$, $B$, $C$ and $D$ (and a similar number of `path contractions' as well). After each sequence of such reassignments it is understood that the hidden constant in the $O(\eta_1)n$-notation of the definition of an acceptable/cyclic vertex is increased.

\begin{claim}\label{cl:3}
By reassigning non-cyclic vertices to $A,B,C,$ or $D$, we can arrange every vertex of $G$ to be acceptable.
\end{claim}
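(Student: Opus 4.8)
The plan is to reduce everything to the non-cyclic vertices. By \cref{cl:cyclic} there are only $O(\eta_1)n$ of them, and since a cyclic vertex is automatically acceptable (as remarked after the definition of acceptable), these are the only vertices that could fail to be acceptable. Moreover, reassigning $O(\eta_1)n$ vertices changes $|A|,|B|,|C|,|D|$ and every neighborhood count relative to these sets by at most $O(\eta_1)n$, which is harmless: it is absorbed into the hidden constant of the $O(\eta_1)n$-notation (per the convention stated just before the claim), so cyclic vertices stay cyclic --- hence acceptable --- and the quantities computed below are insensitive to it. Thus it suffices to reassign each non-cyclic vertex, using the original partition to decide where it goes, to one of $A,B,C,D$ so that it becomes acceptable, and then update the partition once.

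First I would record a purely bookkeeping fact. Reading off the definition, a vertex placed in $P(i)$ is acceptable exactly when it has an outneighborhood of size $\ge \mu|Z|-O(\eta_1)n$ in one of the two ``out-classes'' $Z$ of $P(i)$ and an inneighborhood of size $\ge \mu|W|-O(\eta_1)n$ in one of the two ``in-classes'' $W$ of $P(i)$; here $A$ and $D$ have out-class pair $\{A,B\}$, $B$ and $C$ have out-class pair $\{C,D\}$, $A$ and $B$ have in-class pair $\{A,D\}$, and $C$ and $D$ have in-class pair $\{B,C\}$. The four acceptable properties attached to a given part realize all four pairings of an out-class with an in-class, and the map sending $P(i)$ to $(\text{out-class pair},\text{in-class pair})$ is a bijection onto $\{\{A,B\},\{C,D\}\}\times\{\{A,D\},\{B,C\}\}$.

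Now fix a non-cyclic vertex $x$. By \Cref{le:delta0}, $\deg^+(x)\ge n/8$, so by pigeonhole over the four parts there is a part $Z^+$ with $|N^+(x)\cap Z^+|\ge n/32$, which (as $\mu$ is small) is at least $\mu|Z^+|-O(\eta_1)n$; likewise there is a part $Z^-$ with $|N^-(x)\cap Z^-|\ge\mu|Z^-|-O(\eta_1)n$. Let $\mathcal O\in\{\{A,B\},\{C,D\}\}$ be the out-class pair containing $Z^+$ and $\mathcal I\in\{\{A,D\},\{B,C\}\}$ the in-class pair containing $Z^-$, and reassign $x$ to the unique part $P(i)$ with out-class pair $\mathcal O$ and in-class pair $\mathcal I$. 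Then $Z^+$ is an out-class and $Z^-$ an in-class of $P(i)$, so $x$ satisfies the corresponding acceptable property for $P(i)$. Carrying this out simultaneously for all non-cyclic vertices and then updating the partition completes the argument.

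I do not expect a genuine obstacle; the one thing needing real care is the bookkeeping in the middle paragraph --- verifying that each part's four acceptable properties do exhaust every out-class/in-class pairing, so that the target part produced by $(Z^+,Z^-)$ always renders $x$ acceptable --- together with double-checking that relocating $O(\eta_1)n$ vertices perturbs every relevant count by only $O(\eta_1)n$, consistent with the convention fixed before the claim.
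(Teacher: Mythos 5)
Your proposal is correct and follows essentially the same route as the paper: apply Proposition~\ref{le:delta0} to get $\deg^\pm(x)\ge n/8$, pigeonhole to find parts $Z^+,Z^-$ with $|N^+(x)\cap Z^+|,|N^-(x)\cap Z^-|\ge n/32$, observe that the $16$ ordered pairs $(Z^+,Z^-)$ exhaust all cases of the acceptability definition, and reassign $x$ accordingly, absorbing the $O(\eta_1)n$ reassignments into the hidden constant. The only difference is cosmetic: you make precise the bijection between parts and (out-class pair, in-class pair), which the paper leaves implicit in the phrase ``each of which corresponds to one case.''
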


In proving the following claims, the contraction of paths will play a crucial role. Let $P$ be a path in $G$ that starts from $p_1$ to $p_2$ lie in the same class $P(i)$, the \emph{contraction}
of $P$ yields the following oriented graph $H$: we add a new vertex $p$ to the class $P(i)$ and remove
(the vertices of) the path $P$ from $G$. The new vertex $p$ remains the inneighbors of $p_1$ in $P(i-1)$ and the outneighbors of $p_2$ in $P(i+1)$, i.e.~$N^+_H(p)=N^+_G(p_2)\cap P(i+1)$ and $N^-_H(p)=N_G^-(p_1)\cap P(i-1)$. Observe that every cycle in  $H$ containing $p$ corresponds to a cycle in $G$ containing $P$. The paths $P$ utilized in the proof of Claim \ref{cl:4} will have initial and final vertices in the same class and will be \emph{acceptable}, meaning that every edge on $P$ is acceptable.
Note that each such acceptable path $P$ must be \emph{BD-balanced}, which means that if we delete the initial vertex of $P$ we are left with a path that meets $B$ and $D$ the same number of times. This may be seen from the observations that visits of $P$ to $B\cup D$ alternate between $B$ and $D$, and if the path is in $A$ and then leaves, it must visit $B$ and $D$ an equal number of times
before returning to $A$ (and similarly for $C$). Notice that if $|B| = |D|$ and we contract a $BD$-balanced path, then the resulting digraph will still satisfy $|B| = |D|$. The `moreover' part
of the following claim is later utilized in the proof to transform a graph with 
$|B| = |D| + 1$ into  one with $|B| = |D|$
under specific situations.

\begin{claim}\label{cl:4}
If $|B|=|D|$, $|C| \ge |A| = \Omega(\sqrt{\mu}) n$, and every vertex is acceptable, then $G$ has a Hamilton cycle. Moreover, the assertion remains valid under a slightly relaxed condition: $|B|=|D|$, and there exists a vertex $x$ such that all vertices except $x$ are acceptable, there is at least one acceptable edge directed toward $x$ and one acceptable edge directed away from $x$.
\end{claim}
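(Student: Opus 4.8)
The plan is to first reduce to a cleaned-up graph in which every vertex behaves ideally, and then to build the Hamilton cycle explicitly from three pieces: a Hamilton path through $A$, a Hamilton path through $C$, and a spanning alternating path through $B\cup D$ using every vertex of $B$ and all but one vertex of $D$, the four pieces being joined by single $A$--$B$, $B$--$C$, $C$--$D$ and $D$--$A$ edges. The hypothesis $|B|=|D|$ is exactly what is needed: since $B$ and $D$ are almost independent and the only edges leaving or entering them follow the cyclic pattern $A\to B\to C\to D\to A$ together with $B\leftrightarrow D$, any Hamilton cycle must interleave its visits to $B$ and $D$ so as to use them equally often.

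For the reduction, call a vertex of $P(i)$ \emph{good} if it has at least $|P(i+1)|-O(\eta_1)n$ out-neighbours in $P(i+1)$ and at least $|P(i-1)|-O(\eta_1)n$ in-neighbours in $P(i-1)$. By \cref{cl:cyclic,cl:3,cl:6}, all but $O(\eta_1)n$ vertices are good, and every vertex is acceptable. For each non-good vertex $v$ --- and, in the ``moreover'' version, for the exceptional vertex $x$, using the acceptable in-edge and out-edge supplied by hypothesis --- I would take an acceptable in-edge and an acceptable out-edge at $v$ and extend them, through good vertices only, to a short acceptable path $P_v$ whose two endpoints lie in a common class; this is always possible because acceptable edges respect the cyclic order, so from any class one can return to any prescribed class in boundedly many steps. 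Each such $P_v$ is automatically $BD$-balanced, so after making the $P_v$ vertex-disjoint (greedily, as there are only $O(\eta_1)n$ of them) one may contract them all without changing $|B|-|D|$. This yields an oriented graph $G'$ with classes $A',B',C',D'$ in which every vertex is good, $|B'|=|D'|=(1/4\pm O(\eta_1))n$, $|C'|\ge|A'|=\Omega(\sqrt{\mu}\,n)$, and such that a Hamilton cycle of $G'$ lifts to one of $G$; from now on I work in $G'$.

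By \cref{cl:mindeg-ac}, applied to $G'$ and to its reverse, each of $G'[A']$ and $G'[C']$ is a tournament in which every vertex has both out-degree and in-degree at least half the class size minus $O(\eta_1)n$, hence $\Omega(n)$-strongly connected, hence $4$-strong, hence Hamilton-connected by Thomassen's theorem; so $G'[A']$ has a Hamilton path between any prescribed pair of its vertices, and likewise $G'[C']$. The connector sets $E(A',B')$, $E(B',C')$, $E(C',D')$, $E(D',A')$ each miss only $O(\eta_1)n^2$ edges of the relevant complete bipartite graph. Hence, once an alternating path $W=b_1e_1b_2e_2\cdots e_{k-1}b_k$ has been produced in $G'$ --- with $b_1,\dots,b_k$ all of $B'$, with $e_1,\dots,e_{k-1}$ distinct vertices of $D'$ and $e_k$ the remaining one, with $b_ie_i\in E(B',D')$ and $e_ib_{i+1}\in E(D',B')$ --- one can choose the Hamilton paths $Q_A$ of $G'[A']$ and $Q_C$ of $G'[C']$ so that the four connectors $Q_A\to b_1$, $b_k\to Q_C$, $Q_C\to e_k$ and $e_k\to Q_A$ are all present, and splice everything into a Hamilton cycle. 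The whole problem is thereby reduced to producing $W$.

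The hard part is producing $W$, since ``good'' controls a $B'$-vertex only towards $C'$ and $A'$, not towards $D'$: the bipartite subdigraph on $B'\cup D'$ is known only to be, up to $O(\eta_1)n$ errors per vertex, a bipartite tournament between equal parts with $e(B',D')\ge|A'|n/8-O(\eta_1)n^2$ and $e(D',B')\ge|C'|n/8-O(\eta_1)n^2$, the two densities summing to essentially $1$. I would first split $B'=B_0\cup B_1$, with $B_0=\{b:\deg^{+}_{D'}(b)\le\mu n\}$ the ``$C$-escaping'' vertices (which cannot lie in the interior of $W$ and must instead be routed through $C'$), and split $D'=D_0\cup D_1$ symmetrically; an Ore-degree estimate against the pairs $(b,d)$ with $bd\notin E(G')$, combined with the extremal edge counts --- a quadratic argument of the kind used in \cref{cl:mindeg-ac} and in the proof of \cref{lm:absorbing} --- bounds $|B_0|$ and $|D_0|$ in terms of $|A'|,|C'|$ tightly enough that these vertices form only a bounded number of ``runs'' that can each be inserted just before a short block of $Q_C$ (each $b\in B_0$ has $\ge|C'|-O(\eta_1)n$ out-neighbours into $C'$), at the cost of breaking $Q_C$ into boundedly many subpaths that are reconnected through $D'$-vertices. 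On the bulk $B_1\cup D_1$ the bipartite tournament has linear minimum degree on both sides and is near-regular, so one finds near-perfect matchings in $G'[B_1,D_1]$ and in $G'[D_1,B_1]$ (defect Hall, the near-regularity making the defect $o(n)$), takes their union --- a disjoint union of directed paths and cycles spanning $B_1\cup D_1$ --- and merges the cycles into one alternating path by a rotation argument available because both bipartite graphs have linear minimum degree. Combining the $B_0/D_0$ surgery on $Q_C$ with this bulk alternating path, and arranging the prescribed ends, yields $W$ and hence the Hamilton cycle. The two delicate points I expect to do the real work --- making the quadratic bound on $|B_0|$ sharp enough to afford the $C'$-surgery, and carrying out the matching and rotation step while keeping the ends of $W$ compatible with the connectors --- are exactly the ``structural or numerical analysis'' flagged for this claim, and follow the template of~\cite{keevash2009exact}.
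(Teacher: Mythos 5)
Your plan coincides with the paper's proof only in Step~1: both contract $O(\eta_1)n$ short $BD$-balanced paths through the non-cyclic (but acceptable) vertices to obtain a graph $G'$ in which every vertex is cyclic, with $|B'|=|D'|$ preserved. After that the two arguments diverge completely, and yours has gaps that I do not think can be filled as sketched.

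The paper never attempts an explicit stitch of Hamilton paths in $A'$, $C'$ and an alternating walk in $B'\cup D'$. Instead, it \emph{balances} the four classes by further contractions --- a long path inside $C'$ (and inside $A'$ if needed), followed by a single zigzag path built from a matching $M_1\subseteq E(D',B')$ of size exactly $|D'|-|A'|$ interleaved with $C'$-vertices, whose contraction simultaneously reduces $|B'|,|C'|,|D'|$ to $|A'|$ --- and then hands the balanced cyclic $4$-partite super-regular structure to the Blow-up Lemma, which returns a Hamilton cycle outright. The only use made of the bipartite tournament on $B'\cup D'$ is to extract one matching of size $|D'|-|A'|$ (via the bipartite Tur\'an bound), which is easy. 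The delicate ``interleave $B$ and $D$ equally'' bookkeeping is thus reduced to an arithmetic identity, and all of the gluing is absorbed by the Blow-up Lemma.

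The weak link in your version is exactly where your plan admits it, but I do not think the sketched tools suffice. You need a spanning alternating path $W$ through $B'\cup D'$ in a bipartite tournament where $e_{G'}(B',D')\approx|A'|n/8$ and $e_{G'}(D',B')\approx|C'|n/8$; when $|A'|=\Theta(\sqrt{\mu})n$ the density in the $B'\to D'$ direction is only $\Theta(\sqrt{\mu})$, so $G'[B',D']$ is a sparse bipartite oriented graph, not a dense one. Your set $B_0=\{b:\deg^+_{D'}(b)\le\mu n\}$ can have size $\Theta(n)$ in this regime (the \emph{average} out-degree into $D'$ is only about $|A'|/2$, so nothing prevents a linear fraction from falling below the threshold), and the claim that these vertices fall into ``a bounded number of runs'' which can be spliced into $Q_C$ is unsupported --- splicing a linear number of $B_0$-vertices into $Q_C$ would require an equal number of $D'$-connectors and would destroy the alternating structure on the remainder. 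On the bulk $B_1\cup D_1$, ``near-perfect matching in both directions, then a rotation argument'' is not a theorem you can cite: the forward matching lives in a bipartite graph of bounded but possibly small density, defect-Hall gives at best an $o(n)$ defect under extra structural assumptions you have not verified, and rotation-extension is an undirected tool that does not transfer mechanically to merging cycles of $M\cup M'$ into a single directed alternating Hamilton path. This step is the real content of the claim, and it is precisely what the paper's balance-then-Blow-up strategy is designed to circumvent.

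A smaller error: $G'[A']$ is a dense oriented graph with $\delta^0\ge|A'|/2-O(\eta_1)n$, not a tournament, so Thomassen's Hamilton-connectedness theorem for $4$-strong tournaments does not apply as stated (there are analogous results for oriented graphs of high semidegree, but they must be invoked correctly). This is repairable; the $W$-construction is the genuine gap.
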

\begin{proof}
    For clarity, we divide the proof into five steps.
    
    \textit{Step~$1$}. Contract some suitable paths into vertices such that the resulting digraph consists entirely of cyclic vertices. Let $v_1,\ldots,v_t$ be the vertices that are acceptable but not cyclic. For each $v_i$, choose a cyclic outneighbor $v_i^+$ and a cyclic inneighbor $v_i^-$ so that all of these vertices are distinct, and so that the edges $v_iv_i^+$ and $v_i^-v_i$ are acceptable. This can be done since $t=O(\eta_1)n$. Let $P'_i$ be a path of length at most 3 starting at $v_i^+$ and ending at a cyclic vertex that lies in the same class as $v_i^-$, and where the successive vertices lie in successive classes, that is, the successor of a vertex $x \in V (P)\cap P(i)$ lies in $P(i+1)$. For example, if $v_i$ has the first of the acceptable properties of a vertex in $A$, then we can choose $v_i^+\in B,v_i^-\in D$ and so $P'_i$ would have its final vertex in $D$. Also, if $v_i^-$ and $v_i^+$ lie in the same class, then $P'_i$ consists of the single vertex $v_i^+$. Note that the paths $P'_i$ can be chosen to be disjoint. Let $P_i\coloneqq v_i^{-}v_iv_i^+P'_i$. Then $P_i$ are acceptable and thus $BD$-balanced. Let $G_1$ be the oriented graph obtained from $G$ by contracting the paths $P_i$. Then every vertex of $G_1$ is cyclic (by changing the constant involved in the $O(\eta_1)$-notation in the definition of a cyclic vertex if necessary). Moreover, the sets $A, B, C, D$ in $G_1$ still satisfy $|B| = |D|$, and we still assume that $|A| \le |C|$ by symmetry.

    \textit{Step~$2$}. Reserve some matchings between $B$ and $D$, which depends on the size of these vertex classes. Since there are at most $O(\eta_1)n$ vertices that are not cyclic, it still holds, for $G_1$, that $e(B,D) > |A|n/8 - O(\eta_1)n^2$ and $e(D,B) > |C|n/8 - O(\eta_1)n^2$. If $|D| \le |A|$ then we reserve no matchings. If $|A| < |D| \le |C|$, then we reserve one matching $M_1$ of size $|D|-|A|$ in $E(D,B)$. This can always be achieved by applying the bipartite Tur\'{a}n number of matchings, which implies that the maximum matching of $G_1[D,B]$ is at least
    \[\frac{|C|n/8 - O(\eta_1)n^2}{|D|} \ge \frac{|C|}{2} - O(\eta_1)n = \frac{n}{4} - \frac{|A|}{2} - O(\eta_1)n \ge |D| - |A|.\]
    Here the last inequality holds since we assume that $|A| = \Omega(\sqrt{\mu})n$. If $|C| < |D|$, then we reserve a matching $M_1$ of size $|D|-|A|$ in $E(D,B)$, and a matching $M_2$ of size $|D|-|C|$ in $E(B,D)$. Note that we have $|D|-|C| \le |D|-|A| = O(\eta_1)n$ whenever $|C| < |D|$. We can find $M_1$ and $M_2$ that share no common vertices.

    \textit{Step~$3$}. Contract some suitable paths so that $|A|=|B|=|C|=|D|$. To begin with, if $|C| > |D|$, then we claim that there is a directed path $P_C$ of length $|C|-|D|$ in $G_1[C]$. In fact, since at most $O(\eta_1)n$ short paths are contracted in the first step, $G_1[C]$ must contain a subgraph whose minimum indegree is at least $|C|/2 - O(\eta_1)n$ by \cref{cl:mindeg-ac}. Applying  \cite[Theorem 4]{cheng2024length} there is a directed path of length $1.5(|C|/2 - O(\eta_1)n) \ge |C|-|D|$. Similarly we can find such a path $P_A$ of length $|A|-|D|$ whenever $|A| > |D|$. Let $G_2$ be the oriented graph obtained from $G_1$ by contracting $P_C$ and $P_A$. Clearly every vertex of $G_2$ is cyclic.

    When $|A| < |D|$, let us find a $BCD$-balanced path $P_1$ through the matching $M_1$. We denote the edges of $M_1$ to be $y_1x_1,y_2x_2,\ldots,y_sx_s$, where $s=|D|-|A|$, $x_i\in B$ and $y_i\in D$. Choose a vertex $x_0\in B\setminus V(M_1)$. For every $1\le i\le s$, we can find a vertex $z_i\in C$ without repetition such that $x_{i-1}z_i,z_iy_i\in E(G_2)$. This is because every vertex is cyclic and $s = |D|-|A| \le n/4 - \Omega(\sqrt{\mu})n$. Similarly we can find such a path $P_2$ whenever $|C| < |D|$. Let $G_3$ be the oriented graph obtained from $G_2$ by contracting $P_1$ and $P_2$. Then we have $|A|=|B|=|C|=|D|$.

    \textit{Step~$4$}. Find a Hamilton cycle in $G_3$. Recall that every vertex of $G_2$ is cyclic, and $G_3$ has $\Omega(\sqrt{\mu})n$ vertices. For any vertex $x\in P(i)$, $x$ has at least $|P(i+1)| - O(\eta_1/\sqrt{\mu})n$ outneighbors in $P(i+1)$, and at least $|P(i-1)| - O(\eta_1/\sqrt{\mu})n$ inneighbors in $P(i-1)$.
    
    Let $G'_3$ be the underlying graph corresponding to the set of edges oriented from $P(i)$ to $P(i+1)$. Let us choose $\eta_1,\mu \ll \eta_2 \ll 1$ so that each pair $(P(i),P(i+1))$ is $(\eta_2,1)$-super-regular in $G'_3$. Let $F'$ be the $4$-partite graph with vertex classes $A=P(1),B=P(2),C=P(3),D=P(4)$, where the $4$ bipartite graphs induced by $(P(i),P(i+1))$ are all complete. Clearly $F'$ contains a Hamilton cycle. We may apply \cref{lem:blowup} with $\Delta=2$ and $k=4$ to find a Hamilton cycle $C_{Ham}$ in $G'_3$, which corresponds to a directed Hamilton cycle in $G_3$ and thus, in turn, to a directed Hamilton cycle in $G$.

    \textit{Step~$5$}. Prove `moreover' part.
    Now, we deduce the `moreover' part from the first part of claim~\ref{cl:4}. Similarly as in the
    first part, the approach is to find a suitable path $P$ containing $x$ which we can contract into a single vertex so that the resulting oriented graph still satisfies $|B| = |D|$ and now all of its vertices are acceptable. Choose $x^-$ and $x^+$ so that $x^-x$ and $xx^+$ are acceptable edges. Since $x^-$ is acceptable, it has a cyclic inneighbor $x^{--}$ so that the edge $x^{--}x^-$ is acceptable. Let $P(i)$ be the class which contains $x^{--}$. Let $P'$ be a path of length at most $3$ starting at $x^{+}$ and ending at a cyclic vertex in $P(i)$ so that successive vertices lie in successive classes. Let $P\coloneqq x^{--}x^{-}xx^{+}P'$. Then $P$ is acceptable and thus $BD$-balanced. Let $H$ be the oriented graph obtained from $G$ by contracting $P$. Then in $H$ we still have $|B| = |D|$. All vertices that were previously acceptable/cyclic are still so (possibly with a larger error term $O(\eta_1)$. Since $x^{--}$ and the terminal vertex of $P$ are both cyclic (and thus  acceptable), this means that the new vertex resulting from the contraction of $P$ is still acceptable. Therefore we can apply the first part of the claim to obtain a Hamilton cycle in $H$ that clearly corresponds to one in the original oriented graph $G$.
\end{proof}

Since we are assuming that there is no Hamilton cycle, Claim~\ref{cl:4} gives $|B| \ne |D|$. Let us further assume that $|B| > |D|$. Although one cannot deduce a symmetry of $B$ and $D$ by the definition of extremity. In fact, the following proof will no longer use the fact that $e(D) = o(n^2)$. We will show that the set of non-acceptable edges of $A$, $C$, and the larger one among $B$ and $D$ can always be precisely characterised under the assumption that $G$ is not Hamiltonian.

We now say that a vertex is \emph{good} if it is acceptable, and also has one of the properties
$$A : B_{<\mu}C_{<\mu}, ~~~~B: A^{<\mu}B_{<\mu}^{<\mu}C_{<\mu}, ~~~~C: A^{<\mu}B^{<\mu} \text{~~~~or~~~~} D :$$
Otherwise, we say the vertex is \emph{bad}. Notice that the last option means that every acceptable vertex in $D$ is automatically good, and a
cyclic vertex is not necessarily good.

\begin{claim}\label{cl:6}
For each of the properties, $A : C_{>\mu}$, $A : B_{>\mu}$, $C : A^{>\mu}$, $C: B^{>\mu}$,
there are fewer than $|B|-|D|$ vertices with that property. In addition, by reassigning at most $O(\eta_1)n$ vertices, we can arrange that every vertex is good.
\end{claim}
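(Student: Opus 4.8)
The plan is to reduce both assertions to \cref{cl:4}: a large family of vertices carrying one of the four listed ``backward'' properties can be converted, by contracting short paths, into an oriented graph with $|B|=|D|$ in which every vertex is acceptable, and \cref{cl:4} then produces a Hamilton cycle, contradicting the standing assumption that $G$ has none. Throughout I call a vertex of $B$ \emph{typical} if almost all of $A$ lies in its inneighbourhood and almost all of $C$ in its outneighbourhood, a vertex of $A$ \emph{typical} if almost all of $B$ lies in its outneighbourhood, and similarly for $C$; since $G$ is oriented, the $\eta$-extremality of $e(A,B)$ and $e(B,C)$ forces $e(B,A),e(C,B)=O(\eta)n^2$, so all but $O(\sqrt{\eta})n$ vertices of each class are typical.

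For the first assertion, fix one property, say $C:A^{>\mu}$, and suppose $x_1,\dots,x_m\in C$ have it with $m\ge|B|-|D|$. For each $i$ I would build a directed path $Q_i$ with both endpoints in $B$ that meets $B$ exactly twice and $D$ not at all: take a typical $a_i\in N^+(x_i)\cap A$, a typical $b_i\in N^-(a_i)\cap B$, and a typical $b_i'\in N^+(a_i)\cap B$, and set $Q_i=b_i\to x_i\to a_i\to b_i'$; the remaining three properties are handled by the analogous routings $b\to c\to a\to a'\to b'$ (for $A:C_{>\mu}$), $b\to a\to(a'\to)b'$ (for $A:B_{>\mu}$), and $b'\to c\to b$ with $b$ a typical outneighbour of the special vertex of $C$ (for $C:B^{>\mu}$). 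As $|B|-|D|=O(\eta)n$ while every neighbourhood used has size $\Omega(n)$, the $Q_i$ can be chosen pairwise disjoint greedily. Contracting all of them (as defined before \cref{cl:4}) decreases $|B|-|D|$ by one per path, so the resulting oriented graph $G'$ has $|B'|=|D'|$; its class sizes moved by only $O(\eta)n$, so $G'$ still meets the size hypotheses of \cref{cl:4} (after the symmetry $A\leftrightarrow C$ plus edge reversal if $|A'|>|C'|$), and every vertex of $G'$ is still acceptable — for the new vertices this is exactly where typicality of $b_i,b_i'$ enters, their in/outneighbourhoods being $N^-_G(b_i)\cap A$ and $N^+_G(b_i')\cap C$. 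Then \cref{cl:4} gives a Hamilton cycle in $G'$, hence in $G$, a contradiction; so fewer than $|B|-|D|$ vertices have the chosen property.

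For the ``in addition'' part, every acceptable vertex of $D$ is good by definition, so only $A,B,C$ matter. An acceptable vertex of $A$ is bad only if it has $C_{>\mu}$ or $B_{>\mu}$, one of $C$ only if it has $A^{>\mu}$ or $B^{>\mu}$, so by the first part $A\cup C$ contains $O(\eta)n$ bad vertices. An acceptable vertex of $B$ is bad only if it has $A^{>\mu}$, $C_{>\mu}$, $B^{>\mu}$ or $B_{>\mu}$; the first two occur for $O(\sqrt{\eta})n$ vertices since $e(B,A),e(C,B)=O(\eta)n^2$, and for $B^{>\mu}$ and $B_{>\mu}$ the same contraction argument applies verbatim — contracting a single edge between two typical vertices of $B$ decreases $|B|-|D|$ by one — so each of these occurs for at most $|B|-|D|+O(\sqrt{\eta})n$ vertices. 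Hence at most $O(\eta_1)n$ vertices are bad. Finally I would reassign each bad vertex $v$: being acceptable, $v$ has a significant outneighbourhood in one of the four classes and a significant inneighbourhood in one of them, and a short check against the list of acceptable profiles (using $\eta\ll\eta_1\ll\mu$) shows that placing $v$ in the class for which those two neighbourhoods become an in-class and an out-class makes $v$ acceptable, hence good, there; this target is $D$ whenever the offending property is $C_{>\mu}$ or $B_{>\mu}$ for $v\in A$ or $A^{>\mu}$ or $B^{>\mu}$ for $v\in C$, and the surviving bad vertices of $B$ are treated by the few remaining cases. Only $O(\eta_1)n$ vertices move, so by the convention stated before \cref{cl:3} all size and density estimates persist with enlarged hidden constants.

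The step I expect to be the main obstacle is the reassignment bookkeeping — verifying profile by profile that each bad vertex actually becomes good in its new class — and in particular a bad vertex of $B$ whose offending property is $B^{>\mu}$ or $B_{>\mu}$, where the troublesome neighbourhood sits inside $B$ itself; what makes this tractable is precisely the observation above that such vertices are scarce (bounded by $|B|-|D|+O(\sqrt{\eta})n$ via the contraction-to-\cref{cl:4} argument), so that after discarding them no real obstruction to good-ness survives. A secondary point is to ensure the $O(\eta)n$ rounds of contraction in the first part keep $G'$ within the size range of \cref{cl:4}; since each contracted path removes at most two vertices from $A$ and one from $C$, and $|B|-|D|=O(\eta)n$ is negligible next to $\Omega(\sqrt{\mu})n$, this follows, invoking the $A\leftrightarrow C$ symmetry of \cref{cl:4} in the near-balanced regime.
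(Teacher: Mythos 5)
Your strategy for the first assertion tracks the paper's: for each offending property build $|B|-|D|$ disjoint short paths whose endpoints are cyclic vertices of $B$, that use strictly more of $B$ than of $D$, contract them, and invoke \cref{cl:4}.  One concrete slip in your construction for $C:A^{>\mu}$: you take $b_i\in N^-(a_i)\cap B$, but a typical $a_i\in A$ has almost all of $B$ in its \emph{out}neighbourhood, so $N^-(a_i)\cap B$ may be tiny.  What you actually want is a typical $b_i\in B$ with $b_ix_i\in E(G)$ (which exists because a typical $B$-vertex sends edges to almost all of $C$).  With that fix the first part is fine.

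The second assertion is where your route genuinely diverges from the paper and where the gaps are.  The paper does \emph{not} bound the bad vertices of $B$ by contracting edges inside $B$; it reassigns them one by one to $A$, $C$ or $D$, each move dropping $|B|-|D|$ by $1$ or $2$ while keeping everyone acceptable, and it is exactly to handle the parity issue (when $|B|-|D|$ jumps from $+1$ to $-1$ after a move into $D$) that the `moreover' clause of \cref{cl:4} was included.  Your alternative, ``contracting a single edge between two typical vertices of $B$,'' is underspecified in a way that matters: for the contracted vertex $p$ to remain acceptable you need $N^-(p)=N^-(p_1)\cap A$ and $N^+(p)=N^+(p_2)\cap C$ both large, so \emph{both} endpoints of the contracted edge, including the bad vertex itself, must be cyclic.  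A bad vertex with $B^{>\mu}$ need not be cyclic, and if it is not, contracting an edge out of it destroys acceptability; moreover an edge between two typical vertices not involving the bad vertex does nothing to certify that the bad vertex is scarce.  Your argument can be repaired by first discarding the $O(\eta_1)n$ non-cyclic vertices and arguing only about cyclic bad vertices (each then has $\Omega(\mu n)$ cyclic $B$-outneighbours, a greedy disjoint selection succeeds, and the contracted vertices are cyclic so \cref{cl:4} applies), but this step is missing, and as written the claim ``each of these occurs for at most $|B|-|D|+O(\sqrt{\eta})n$ vertices'' is not justified.  (Minor: the additive slack should be $O(\eta_1)n$, the non-cyclic count, not $O(\sqrt{\eta})n$.)

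Finally, ``placing $v$ in the class for which those two neighbourhoods become an in-class and an out-class makes $v$ acceptable, hence good'' is false in general: acceptable and good are different notions except in $D$.  The paper's reassignment of bad $B$-vertices splits into three cases and in the two cases that send $v$ to $A$ or to $C$ it is precisely the \emph{negative} hypotheses of that case ($|N^-(v)\cap B|,|N^-(v)\cap C|\le\mu|\cdot|$, resp.\ $|N^+(v)\cap A|,|N^+(v)\cap B|\le\mu|\cdot|$) that guarantee goodness, not acceptability alone.  Your ``the surviving bad vertices of $B$ are treated by the few remaining cases'' acknowledges this but does not supply the content.  So the overall architecture of your proof is sound and in the first part essentially the paper's, but the second part as written has two genuine gaps: the typicality requirement on the contracted bad vertex, and the acceptable-implies-good inference.
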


Let $M$ be a maximum matching consisting of edges in $E(B,A)\cup E(B)\cup E(C,A) \cup E(C,B)$.
Say that $M\cap E(B,A)$ matches $B_A \subseteq B$ with $A_B\subseteq A$, that $M \cap E(B)$ is a matching on $B_B\subseteq B$, that $M \cap E(C,A)$ matches $C_A \subseteq C$ with $A_C \subseteq A$ and that $M \cap E(C,B)$ matches $C_B \subseteq C$ with $B_C \subseteq B$. 
Note that $e(M)=|A_B|+|A_C|+|B_B|/2+|B_C|$.

We must have $e(M)<|B|-|D|$; otherwise, there would exist $t\coloneqq|B|-|D|$ edges
$v^-_iv_i$ for $i\in [t]$ in $M$. By an argument similar to that in Claim~\ref{cl:6}, these $t$ edges $v^-_iv_i$ could be extended to $t$ vertex-disjoint paths $P_1,\ldots, P_t$, where each $P_i$ contains $v^-_iv_i$, starts and ends at cyclic vertices $b^-_i,b^+_i$ in the same class, and uses two more vertices from $B$ than from $D$.
As a result, we would obtain a Hamilton cycle as in Claim~\ref{cl:4}, yielding a contradiction.

\begin{claim}\label{cl:7}
$e(M)=0$ and $|B|-|D|=1$.
\end{claim}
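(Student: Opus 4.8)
Since the paragraph preceding the claim shows $e(M)\le |B|-|D|-1$, and $|B|>|D|$ gives $|B|-|D|\ge 1$, the whole statement follows once I prove $|B|-|D|\le 1$: that forces $|B|-|D|=1$, hence $e(M)\le 0$, i.e.\ $e(M)=0$. So I would assume for contradiction that $|B|-|D|\ge 2$ and aim to violate the Ore-degree condition (equivalently, to produce a Hamilton cycle).

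\textbf{Rigidity from the maximality of $M$.} Every edge of $G$ of type $B\to A$, inside $B$, $C\to A$, or $C\to B$ meets $V(M)$, and $|V(M)|=2e(M)\le 2(|B|-|D|)-2=O(\eta_1)n$. Writing $A^{*}:=A\setminus V(M)$, $B^{*}:=B\setminus V(M)$, $C^{*}:=C\setminus V(M)$, there is no edge from $B^{*}$ to $A^{*}$, no edge inside $B^{*}$, no edge from $C^{*}$ to $A^{*}$, and no edge from $C^{*}$ to $B^{*}$. Equivalently, in $G$ every $b\in B^{*}$ has $N^{+}(b)\subseteq C\cup D\cup V(M)$ and $N^{-}(b)\subseteq A\cup D\cup V(M)$, every $c\in C^{*}$ has $N^{+}(c)\subseteq C\cup D\cup V(M)$, and every $a\in A^{*}$ has $N^{-}(a)\subseteq A\cup D\cup V(M)$. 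Combined with Claims~\ref{cl:cyclic} and~\ref{cl:mindeg-ac} (all but $O(\eta_1)n$ vertices are cyclic; a cyclic vertex of $A$, resp.\ $C$, has at least $|A|/2-O(\eta_1)n$ out-neighbours, resp.\ $|C|/2-O(\eta_1)n$ in-neighbours, inside its own class, hence — as $G$ is oriented — at most $|A|/2+O(\eta_1)n$, resp.\ $|C|/2+O(\eta_1)n$, the other way), and with $e(B,D)+e(D,B)\le |B||D|$ together with the $\eta$-extremal lower bounds on $e(B,D)$, $e(D,B)$ (forcing the bipartite digraph between $B$ and $D$ to be near-complete), this shows that $G$ agrees with the extremal oriented graph of Proposition~\ref{prop:ex} up to $O(\eta_1)n$ exceptional vertices.

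\textbf{A degree pair.} By rigidity, $xy\notin E(G)$ for \emph{every} $x\in C^{*}$, $y\in B^{*}$, so the Ore-degree condition applies to each such pair. I would take $x\in C^{*}$ cyclic with $\deg^{+}_{C}(x)\le |C|/2+O(\eta_1)n$, so that $\deg^{+}(x)\le \deg^{+}_{C}(x)+\deg^{+}_{D}(x)+|V(M)|\le |C|/2+|D|+O(\eta_1)n$, and $y\in B^{*}$ with $\deg^{+}_{D}(y)$ as large as possible; since $\tfrac1{|B|}\sum_{b\in B}\deg^{+}_{D}(b)=e(B,D)/|B|\ge |A|/2-O(\eta_1)n$ by $\eta$-extremality and $\deg^{+}_{D}(y)+\deg^{-}_{D}(y)\le |D|$, this gives $\deg^{-}_{D}(y)\le |D|-|A|/2+O(\eta_1)n$ and hence $\deg^{-}(y)\le |A|+\deg^{-}_{D}(y)+|V(M)|\le |A|/2+|D|+O(\eta_1)n$. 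Adding, $\tfrac{3n-3}{4}\le \deg^{+}(x)+\deg^{-}(y)\le \tfrac{|A|+|C|}{2}+2|D|+O(\eta_1)n=n/2-\tfrac12(|B|-|D|)+|D|+O(\eta_1)n$; with $|D|$ near $n/4$ and $|B|-|D|\ge 2$ the right-hand side drops below $(3n-3)/4$, a contradiction.

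\textbf{The main obstacle.} The weak point of the last line is that the $\eta$-extremal hypotheses bound the relevant edge counts only to within $O(\eta)n^{2}$, so a priori both $|D|-n/4$ and the error term carry $\Theta(\sqrt{\eta}\,n)$ of slack — far too much to conclude $|B|-|D|\le 1$. The real work is a bootstrapping that removes this slack: starting from the rigid structure above, apply the Ore-degree condition simultaneously to the $\Omega(n^{2})$ non-edges inside $C^{*}\times A^{*}$, $C^{*}\times B^{*}$ and $B^{*}\times A^{*}$ to force the tournaments on $A$ and $C$ to be near-regular, the $B$--$D$ bipartite tournament to be near-complete with near-uniform out-degrees, and the sizes $|A|,|B|,|C|,|D|$ to be within an \emph{absolute constant} of the parameters in Proposition~\ref{prop:ex} and Table~\ref{tab:extremal}; only then does the degree of the chosen pair $(x,y)$ land within $O(1)$ of the extremal value $\lceil(3n-3)/4\rceil-1$, so that any surplus $|B|-|D|\ge 2$ is already fatal. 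This improvement from $O(\eta n)$ to $O(1)$, and the careful numerical bookkeeping it entails, is where the exactness of the theorem is genuinely used; it can equally be packaged, as in the paragraph before the claim and in Claim~\ref{cl:4}, as the statement that under $|B|-|D|\ge 2$ one can build one more correcting $BD$-balanced path than $M$ supplies, balance $|B|$ with $|D|$, and produce a Hamilton cycle.
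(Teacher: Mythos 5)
Your proposal does not actually establish the claim: you correctly identify that the crux is showing $|B|-|D|\le 1$, and you correctly observe the rigidity coming from the maximality of $M$, but the ``degree pair'' computation you sketch carries $O(\eta_1)n$ error terms, which --- as you yourself acknowledge in the last paragraph --- are orders of magnitude too large to detect the $O(1)$ discrepancy $|B|-|D|\ge 2$. The bootstrap you then gesture at (``apply the Ore-degree condition simultaneously to the $\Omega(n^2)$ non-edges\,\dots{}\,to force the sizes to be within an absolute constant'') is precisely the hard step, and it is not carried out, nor is it clear that it could be made to work as stated; the alternative packaging via ``one more $BD$-balanced path'' is the argument already used \emph{before} the claim to prove $e(M)<|B|-|D|$, and by itself it cannot yield $|B|-|D|\le 1$ when $e(M)$ is unknown.

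The paper's proof avoids this obstacle with a genuinely different idea that your proposal misses. Instead of bounding the ``forbidden'' edge counts by $O(\eta)n^2$, it bounds them proportionally to $e(M)$: because every edge in $E(B,A)\cup E(B)\cup E(C,A)\cup E(C,B)$ must touch $V(M)$, and because each endpoint is a \emph{good} vertex, one gets $e(C,A),e(B,A),e(C,B),e(B)\le O(\mu n)\cdot e(M)$ (inequalities (\ref{equ:CA-1})--(\ref{equ:B-1})). Feeding these into a sum of the Ore-degree condition over all ordered \emph{triples} $(x,y,z)\in C\times B\times A$ with $xy,yz\notin E(G)$ --- rather than over a single pair --- gives $|B|-|D|\le 1+O(\sqrt{\mu})\,e(M)$. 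Combined with the previously established $e(M)+1\le |B|-|D|$, this forces $e(M)\le O(\sqrt{\mu})\,e(M)$, hence $e(M)=0$ and then $|B|-|D|=1$. The key point your proposal lacks is that the error terms must scale with $e(M)$, not with $\eta n$; once that is in place, no bootstrapping of the class sizes to $O(1)$ accuracy is needed at all.
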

\begin{proof}
We will first show that $e(M)=0$. Suppose, for contradiction, that $e(M)\ge 1$. 
Define
$A'\coloneqq A\setminus (A_B \cup A_C)$, $B'\coloneqq B \setminus (B_A \cup B_B\cup B_C)$, and $C'\coloneqq C \setminus (C_A \cup C_B)$. By the maximality of $M$, there are no edges from $B'\cup C'$ to $A'$ or from $B'\cup C'$ to $B'$. Since all vertices are good, it follows that
\begin{align}
 e(C,A)&\le e(C_A\cup C_B, A)+e(C,A_B\cup A_C)\nonumber\\
 &\le |C_A\cup C_B|\cdot(\mu+O(\eta_1))|A|+|A_B\cup A_C|\cdot(\mu+O(\eta_1))|C|\nonumber\\
 &\le e(M)(\mu+O(\eta_1))(|A|+|C|)\nonumber\\
&\le 2\mu n \cdot e(M).\label{equ:CA-1}
\end{align}
Similarly, we can obtain that 
\begin{align}
 e(B,A)\le e(B_A\cup B_B\cup B_C, A)+e(B,A_B\cup A_C)
\le 4\mu n\cdot e(M) \label{equ:BA-1}
\end{align}
and
\begin{align}
 e(C,B)\le e(C_A\cup C_B, B)+e(C,B_A\cup B_B\cup B_C)\le  4\mu n\cdot e(M).\label{equ:CB-1}
\end{align}
By the maximality of $M$, every edge in $B$ is incident to a vertex in $B_A\cup B_B\cup B_C$.
Moreover, since every vertex in $B$ is good, it follows that
\begin{align}
e(B)\le |B_A\cup B_B\cup B_C|\cdot 2(\mu+O(\eta_1)|B|\le 4\mu n \cdot e(M).\label{equ:B-1}
\end{align}

Consider all the ordered triples $(x,y,z)$ with $x\in C$, $y\in B$, and $z\in A$ such that $xy\notin E(G)$ and $yz\notin E(G)$. The number of such triples that do not satisfy the adjacency conditions is at most $e(B,A)|C|-e(C,B)|A|$. By Ore-degree condition we have
\begin{align}
    \sum_{\substack{x\in C,y\in B,z\in A\\xy,yz\notin E(G)}}\deg^+(x)+\deg(y)+\deg^-(z)&\ge \frac{3n-3}{2}\Big(|A||B||C|-e(B,A)|C|-e(C,B)|A|\Big)\nonumber\\
    &=\frac{3n-3}{2}|A||B||C|\left(1-O(\mu)e(M)/|A|\right).\label{equ:e(M)l}
\end{align}
On the other hand, by~(\ref{equ:CA-1})-(\ref{equ:B-1}), we obtain
\begin{align}
    \text{LHS of (\ref{equ:e(M)l})}& \le |A||B|\sum_{\substack{x\in C}}\deg^+(x)+|A||C|\sum_{\substack{y\in B}}\deg(y)+|B||C|\sum_{\substack{z\in A}}\deg^-(z)\nonumber\\
    &\le |A||B|\left(\binom{|C|}{2}+e(C,D)+e(C,A)+e(C,B)\right)\nonumber\\
    &~~+|A||C|\Big(  |B||A|+|B||C|+|B||D|+2e(B)\Big)\nonumber\\
    &~~+|B||C|\left(\binom{|A|}{2}+e(B,A)+e(C,A)+e(D,A)\right)\nonumber\\
    &\le|A||B|\Big(|C|^2/2+|C||D|+6\mu n\cdot e(M)\Big)\nonumber\\&~~+|A||C|\Big(|A||B|+|B||C|+|B||D|+8\mu n\cdot e(M)\Big)\nonumber\\&~~+|B||C|\Big(|A|^{2}/2+|A||D|+6\mu n\cdot e(M)\Big)\nonumber\\
    &= 3/2\cdot|A||B||C|\left(n-|B|+|D|\right)+O(\sqrt{\mu })|A||B||C| \cdot e(M).\label{equ:e(M)u} 
\end{align}
It follows from (\ref{equ:e(M)l}) and (\ref{equ:e(M)u}) that
\begin{align*}
    \frac{3n-3}{2}&\le \frac{3/2\cdot \left(n-|B|+|D|\right)+O(\sqrt{\mu})e(M)}{1-O(\mu)e(M)/|A|}\\
    &=  3/2\cdot \left(n-|B|+|D|\right)+  O(\sqrt{\mu})e(M).
\end{align*}
Recall that $e(M)< |B|-|D|$. We obtain that
\begin{align*}
e(M)+1\le |B|-|D|\le 1+O(\sqrt{\mu})e(M),
\end{align*}
which contradicts our assumption that $e(M) \ge 1$. Therefore, we have $e(M) = 0$, and consequently, $|B|-|D|=1$.
\end{proof}

Now we are ready to prove \cref{thm:extremal} under the assumption that $A \ge \sqrt{\mu} \cdot n$. Since $e(M)=0$, we have $xy,yz\notin E(G)$ for every triple $(x,y,z)$ with $x\in C,y\in B$, and $z\in A$. So we may choose a triple $(x,y,z)$ with $x\in C,y\in B$, and $z\in A$ such that
\begin{align*}
    \frac{3n-3}{2}&\le \deg^+(x)+\deg(y)+\deg^-(z)\\
    &\le \left(\frac{|C|-1}{2} + |D|\right) + (|A|+|C|+|D|) + \left(\frac{|A|-1}{2}+ |D|\right)\\
    &= \frac{3(|A|+|C|+2|D|)}{2}-1\\
    &= \frac{3(n-1)}{2}-1,
\end{align*}
which is a contradiction.

\subsection{When $|A| <\sqrt{\mu}\cdot n$}
The key point of this situation is that one can no longer guarantee the super-regularity of $E(A,B)$ and $E(D,A)$ whenever $A$ is relatively small. Thus, we show, in \cref{cl:4'}, that a corresponding Hamilton cycle can be found in a $3$-partite graph. Accordingly, we need to carefully modify the concepts of \emph{cyclic}, \emph{acceptable}, and \emph{good} vertices relative to $A$. The rest of the proof follows from the above section by adapting to the modified concepts. Analogously, we focus on the proofs of \cref{cl:4'} and \ref{cl:7'}. The proofs of other auxiliary claims will be omitted and put in Appendix.

Given a vertex $x\in G$, we will use the compact notation $x$ has property $W:B_{*}^{*}C_{*}^{*}D_{*}^{*}$ as follows. The specific meaning of it is essentially the same as in Section~\ref{subsec:7.1}, except for the error term, in which we use 
$O(\sqrt{\mu})n$ instead of $O(\eta_1)n$.
For example, to say $x$ has property $A: B^{>1}C^{<1/2}_{<1/3}$ means that $x\in A$, $|N^+(x)\cap B|>|B|-O(\sqrt{\mu})n$, $|N^+(x)\cap C|<|C|/2+O(\sqrt{\mu})n$ and $|N^-(x)\cap C|<|C|/3+O(\sqrt{\mu})n$.

It is worth noting that we use the notation $B\coloneqq P(1)$, $C\coloneqq P(2)$, $D\coloneqq P(3)$ in this subsection.
We say that a vertex $x\in V(G)\setminus A$ is \emph{cyclic} if it satisfies $$P(i): P(i+1)^{>1}P(i-1)_{>1}$$
for some $i\le 3$ (counting modulo $3$), and every vertex $x\in A$ is not cyclic. Since $G$ is $\eta$-extremal, the number of non-cyclic vertices is at most $O(\sqrt{\mu})n$. We say that a vertex is \emph{acceptable} if it has one of the following properties:
\begin{itemize}
        \item $A: B^{>1-\sqrt{\lambda}}C_{<\lambda}^{<\lambda}D_{\ge1-\sqrt{\lambda}}$;
        \item $B: C^{>\lambda}D_{>\lambda}$, $B: D_{>\lambda}^{>\lambda}$;
        \item $C: B_{>\lambda}D^{>\lambda}$, $C: B_{>\lambda}C^{>\lambda}$, $C: C_{>\lambda}D^{>\lambda}$, $C: C_{>\lambda}^{>\lambda}$;
        \item $D: B^{>\lambda}C_{>\lambda}$, $D: B_{>\lambda}^{>\lambda}$;
\end{itemize}

\begin{claim}\label{cl:3'}
By reassigning non-cyclic vertices to $A,B,C,$ or $D$, we can arrange that every vertex of $G$ is acceptable. 
\end{claim}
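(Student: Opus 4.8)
\medskip

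The plan is to mirror the argument of Claim~\ref{cl:3} from Section~\ref{subsec:7.1}, but with the modified notion of acceptable that treats $A$ specially (since $|A|<\sqrt{\mu}\,n$ is now so small that it behaves almost like an exceptional set). First I would recall that, since $G$ is $\eta$-extremal, all but $O(\sqrt{\mu})n$ vertices are cyclic (for vertices outside $A$) and that $|A|<\sqrt{\mu}\,n$, so in particular $|B|,|D|=(1/4\pm O(\eta))n$ and $|C|=(1/2\pm O(\eta))n$. A cyclic vertex of $G-A$ is automatically acceptable by inspection of the list (e.g.\ a cyclic $x\in B$ satisfies $B: C^{>1}D_{>1}$, which implies $B: C^{>\lambda}D_{>\lambda}$ since $\lambda<1$), so the only vertices that need attention are (i) the $O(\sqrt{\mu})n$ non-cyclic vertices of $G-A$, and (ii) the vertices currently in $A$.

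\medskip

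For a vertex $v$ that is non-cyclic (or currently in $A$) I would use the Ore-degree condition to show it has a large out-neighbourhood in one of the extremal out-classes and a large in-neighbourhood in one of the extremal in-classes, and then reassign $v$ to the class for which this makes it acceptable. Concretely: since $\delta^0(G)\ge n/8$ (Proposition~\ref{le:delta0}) and the $O(\eta)n^2$ non-edges in the extremal structure are confined to the pairs $(A,C),(A,D),(D,C),(D,D)$ etc., a vertex with few out-neighbours in $B\cup C$ and few in-neighbours in $B\cup D$ would force, via the Ore-degree inequality applied to many non-adjacent pairs through $v$, a violation of the degree sum $(3n-3)/4$; this is exactly the type of counting done in Claim~\ref{cl:cyclic} and Claim~\ref{cl:mindeg-ac}. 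Thus every such $v$ admits at least one ``good direction'': either $v$ has $\ge \lambda|Z|+O(\sqrt\mu)n$ out-neighbours in an out-class $Z$ and $\ge\lambda|Z'|$ in-neighbours in an in-class $Z'$ for some class in $\{B,C,D\}$ (reassign to the corresponding class), or $v$ has $\ge(1-\sqrt\lambda)|B|$ out-neighbours in $B$, $<\lambda|C|$ neighbours of each kind in $C$, and $\ge(1-\sqrt\lambda)|D|$ in-neighbours in $D$ (keep/put $v$ in $A$, making it acceptable of the type $A: B^{>1-\sqrt\lambda}C^{<\lambda}_{<\lambda}D_{\ge 1-\sqrt\lambda}$). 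I would check that these cases are exhaustive by a short case analysis on which of the four $O(\eta)n^2$-sparse pair-types $v$'s missing neighbourhood is concentrated in.

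\medskip

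Finally I would observe that the reassignments total $O(\sqrt{\mu})n$ vertices, so each class changes by $O(\sqrt\mu)n$; since the error terms in the definition of acceptable are already $O(\sqrt\mu)n$, absorbing these shifts only increases the hidden constant (as remarked in the text, after each sequence of reassignments the hidden constant in the $O(\cdot)n$-notation is understood to be increased). One must also re-verify that vertices that were already acceptable remain so after the class sizes shift by $O(\sqrt\mu)n$ and after $O(\sqrt\mu)n$ vertices move in or out of their neighbourhoods — but this is immediate from the same slack in the error terms. Hence every vertex of $G$ can be made acceptable, as claimed.

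\medskip

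\textbf{Main obstacle.} The delicate point is the exhaustiveness of the case analysis for vertices near $A$: I must rule out a vertex $v$ whose out-neighbourhood is, say, split as $(1-\sqrt\lambda)|B|/2$ in $B$ and a comparable amount in $C$, so that $v$ fails \emph{every} acceptable type simultaneously. Showing such ``balanced-but-deficient'' vertices cannot exist requires combining the Ore-degree condition on pairs through $v$ with the structural edge-bounds (\ref{equ:edge less})-type inequalities and the cyclicity of almost all other vertices, and choosing the hierarchy $\eta\ll\eta_1\ll\mu\ll\lambda\ll1$ so that the thresholds $\lambda$ and $1-\sqrt\lambda$ leave no gap. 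This is where the bulk of the computation lies, and it parallels (with $A$ folded in) the argument proving Claim~\ref{cl:3}, whose details the paper defers to the Appendix.
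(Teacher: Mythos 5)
Your overall plan matches the paper's proof in structure: use Proposition~\ref{le:delta0} (so $\delta^0(G)\ge n/8$) to find, for each vertex $x$, a class $P(i)\in\{B,C,D\}$ with $\Omega(n)$ outneighbours and a class $P(j)\in\{B,C,D\}$ with $\Omega(n)$ inneighbours (the class $A$ is too small to matter); of the nine $(P(i),P(j))$ combinations, eight let you reassign $x$ to $B$, $C$ or $D$ and hit one of the weak-$\lambda$ acceptable types, and the single leftover case $(P(i),P(j))=(B,D)$ must be upgraded to the strong $A$-type via the Ore-degree condition. You also correctly note the reassignments are $O(\sqrt\mu)n$ in number and the error terms absorb the shift.

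However, the ``main obstacle'' you flag is not where the difficulty lies. A vertex $v$ whose out-neighbourhood is split roughly evenly between $B$ and $C$ is \emph{not} problematic: it then has both $B^{>\lambda}$ and $C^{>\lambda}$ comfortably (since $n/32\gg\lambda n$), and whichever of $B,C,D$ receives $\Omega(n)$ in-neighbours places $v$ into one of the weak types $B{:}\,C^{>\lambda}D_{>\lambda}$, $C{:}\,B_{>\lambda}C^{>\lambda}$, $C{:}\,C^{>\lambda}_{>\lambda}$, $D{:}\,B^{>\lambda}C_{>\lambda}$, etc. --- all covered. The genuinely delicate step, which your proposal names but does not execute, is the $(B,D)$ case: a vertex with only $B^{>\lambda}D_{>\lambda}$ and small degree into $C$ and from $B$ cannot be placed in $B,C,D$, and one must bootstrap the weak bounds to the strong bounds $B^{>1-\sqrt\lambda}$ and $D_{>1-\sqrt\lambda}$ demanded by the $A$-type. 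The paper does this by applying the Ore-degree inequality to the $\ge(1-\lambda)|C|-O(\sqrt\mu)n$ non-edges $(y,x)$ with $y\in C$, bounding $\sum_{y\in C}\deg^+(y)$ via the near-tournament structure of $C$ and the near-complete $C{\to}D$ edges from $\eta$-extremality, and solving for $\deg^{\pm}_B(x), \deg^{\pm}_D(x)$. That specific computation is what would need to be supplied to complete your argument; as written, it is described only as ``the bulk of the computation.''
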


The following claim can be used to find Hamilton cycle in this case.
\begin{claim}\label{cl:4'}
If $|B|=|D|$, $|A| = O(\sqrt{\mu}) n$, and every vertex is acceptable, then $D$ has a Hamilton cycle. Moreover, the assertion remains valid under a slightly relaxed condition: $|B|=|D|$, and there exists a vertex $x$ such that all vertices except $x$ are acceptable, there is at least one acceptable edge directed toward $x$ and one acceptable edge directed away from $x$.
\end{claim}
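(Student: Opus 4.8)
The plan is to follow the five‑step scheme of the proof of Claim~\ref{cl:4}, now built around the directed $3$-cycle $B\to C\to D\to B$ on the non-trivial classes, with $A$ — which is now of size $O(\sqrt\mu)n$ and whose vertices are by definition never cyclic — absorbed in bulk along $D\to A\to B$ transitions; once $A$ has been removed and the three remaining classes made equal in size, the final cycle comes from the Blow-up Lemma applied to a balanced blow-up of a directed triangle.

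\emph{Steps 1--2 (reduce to a cyclic digraph on $B,C,D$ with $A$ empty).} Since $G$ is $\eta$-extremal there are only $O(\sqrt\mu)n$ non-cyclic vertices, and by hypothesis every vertex is acceptable. First I would deal with the non-cyclic vertices lying in $B\cup C\cup D$: for each such $v$, using that $v$ is acceptable and that all but $O(\sqrt\mu)n$ vertices of each class are cyclic, greedily choose a short acceptable path $P_v$ avoiding $A$, running from a cyclic out-neighbour of $v$ to a cyclic vertex in the same class as a chosen cyclic in-neighbour of $v$, with the $P_v$ pairwise disjoint; only $O(\sqrt\mu)n$ vertices are used in all, against relevant neighbourhoods of size $\ge n/4-o(n)$. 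As in Claim~\ref{cl:4}, every acceptable path with both endpoints in the same class and using no $A$--$C$ edge is \emph{$BD$-balanced} (each relevant directed cycle of the class-digraph meets $B$ and $D$ the same number of times), so contracting the $v^-vv^+P_v$ preserves $|B|=|D|$ and, after enlarging the hidden constants, makes every vertex of $B\cup C\cup D$ cyclic. Next I would absorb $A$: the acceptable type of an $a\in A$ forces $|N^+(a)\cap B|,\,|N^-(a)\cap D|\ge n/4-O(\sqrt\lambda)n$, so greedily pick distinct cyclic vertices $d_a,d_a'\in D$, $b_a\in B$, $c_a\in C$ with $d_a a b_a c_a d_a'$ a directed path, the $5|A|$ chosen vertices all distinct and disjoint from Step~1. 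Each $d_a a b_a c_a d_a'$ is an acceptable path with both endpoints in $D$, hence $BD$-balanced; contracting it deletes one vertex from each of $A,B,C,D$ and, since the new vertex inherits $\approx$ all of $B$ as out-neighbours from $d_a'$ and $\approx$ all of $C$ as in-neighbours from $d_a$, keeps the whole digraph cyclic. The result is a digraph $G'$ with $A=\emptyset$, $|B|=|D|=:k$, every vertex cyclic, and $|C|=2k\pm O(\sqrt\mu)n>k$ by a size count from $|B|=|D|=(1/4\pm O(\sqrt\mu))n$. For the ``moreover'' variant, I would first eliminate the single exceptional vertex $x$ as in Step~$5$ of the proof of Claim~\ref{cl:4}: extend the two given acceptable edges at $x$ to cyclic vertices in a common class, contract the resulting short acceptable ($BD$-balanced) path, and then run Steps~1--2.

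\emph{Steps 3--4 (equalise $|C|$, then Blow-up).} By the appendix analogue of Claim~\ref{cl:mindeg-ac}, every cyclic vertex of $C$ has at least $|C|/2-O(\sqrt\mu)n$ in-neighbours inside $C$, so by \cite[Theorem~4]{cheng2024length} $G'[C]$ contains a directed path on $|C|-k+1$ vertices (which is $\approx k\approx|C|/2$, comfortably below $1.5(|C|/2-o(n))$). Contracting it brings $|C|$ down to exactly $k$; the new $C$-vertex, inheriting $\approx$ all of $D$ as out-neighbours and $\approx$ all of $B$ as in-neighbours from the two ends of the path, is again cyclic. Now $|B|=|C|=|D|=k$ and every vertex is cyclic, so each of the bipartite graphs carrying the $B\to C$, $C\to D$, $D\to B$ edges has minimum degree $\ge k-O(\sqrt\mu)n$ and hence, for a suitable $\eta_1,\mu\ll\eta_2\ll1$, its underlying graph is $(\eta_2,1)$-super-regular. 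Since a balanced blow-up of the directed triangle is Hamiltonian, Lemma~\ref{lem:blowup} (with $\Delta=2$ and three clusters) yields a directed Hamilton cycle of $G'$, and undoing all the contractions turns it into a directed Hamilton cycle of $G$.

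\emph{Main obstacle.} I expect the delicate part to be the bookkeeping of Steps~1--3: performing $O(\sqrt\mu)n$ contractions of bounded-length paths, chosen greedily and pairwise disjoint, while simultaneously keeping $|B|=|D|$, keeping every surviving vertex cyclic, and leaving enough room in every relevant neighbourhood. What makes it go through — and deserves to be isolated once and for all — is that an acceptable path whose endpoints share a class (and which avoids $A$--$C$ edges) is automatically $BD$-balanced, so no contraction can destroy $|B|=|D|$; and once the three class sizes agree, Step~4 is immediate because the surviving bipartite pairs have density $1-o(1)$, so the required super-regularity is free.
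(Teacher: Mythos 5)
Your proposal is correct and follows essentially the same route as the paper's proof: contract short acceptable (hence $BD$-balanced) paths to remove all non-cyclic vertices — in particular emptying $A$, whose vertices are non-cyclic by definition — while preserving $|B|=|D|$, then contract a long path inside $C$ (existence via \cref{cl:mindeg-ac}'s analogue plus \cite[Theorem~4]{cheng2024length}) to equalise $|B|=|C|=|D|$, and finish with \cref{lem:blowup} on the balanced blow-up of the directed triangle; the \emph{moreover} part is handled exactly as in Step~5 of \cref{cl:4}. The only cosmetic difference from the paper is that you absorb the $A$-vertices in a separate second sweep rather than together with the other non-cyclic vertices, but the paths you use ($d_a\, a\, b_a\, c_a\, d_a'$, with endpoints in $D$) are precisely the paths the paper's Step~1 would construct for an acceptable-but-non-cyclic vertex of $A$.
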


\begin{proof}
We will show that, with some slight modifications, one can find a Hamilton cycle as what have been done in the proof of \ref{cl:4}.

\textit{Step~$1$}. Contract some suitable paths into vertices such that the resulting oriented graph consists entirely of cyclic vertices.
Note that in this case $B\coloneqq P(1)$, $C\coloneqq P(2)$, $D\coloneqq P(3)$, and every vertex of $A$ is non-cyclic.
Similarly to Step 1 of Claim~\ref{cl:4}, we can find suitable paths which together contain all the non-cyclic vertices. Then we will contract these paths into vertices so that the resulting oriented graph
$G_1$ consists entirely of cyclic vertices. 
Moreover, the sets $B, C, D$ in $G_1$ still satisfy $|B|=|D|$, $A=\emptyset$, and we still have that the sizes of the other pairs of sets differ by at most $O(\sqrt{\mu})n$.

\textit{Step~$2$}. Contract some suitable paths so that $|B|=|C|=|D|$. This can be done by a similar argument as the proof of \cref{cl:4}. Let $G_2$ be the resulting oriented graph.

\textit{Step~$3$}. Find a Hamilton cycle in $G_2$. Clearly every vertex of $G_2$ is cyclic. Let $G'_2$ be the underlying graph corresponding to the set of edges oriented from $P(i)$ to $P(i+1)$. By choosing a propriate $\mu'$ so that each pair $(P(i),P(i+1))$ is $(\mu',1)$-super-regular in $G'_2$, we may apply \cref{lem:blowup} to find a Hamilton cycle in $G'_2$, which corresponds to a directed Hamilton cycle in $G_2$ and thus, in turn, to a directed Hamilton cycle in $G$.

The `moreover' part immediately follows from a similar proof of \cref{cl:4}.
\end{proof}

Let us assume that $|B| \ge |D|$. Since we are assuming that there is no Hamilton cycle, Claim~\ref{cl:4'} gives $|B| > |D|$.  We now define that a vertex is \emph{good} if it is acceptable, and also has one of the properties
$$A : B_{<\sqrt{\lambda}}C_{<\lambda}, ~~~~B: B_{<\lambda}^{<\lambda}C_{<\lambda},~~~~ C: B^{<\lambda} \text{~~~~or~~~~} D :$$
Otherwise, we say the vertex is \emph{bad}. Note that the last options mean that every acceptable vertex in $D$ is automatically good and a
cyclic vertex is not necessarily good.

\begin{claim}\label{cl:6'}
If $|A| = O(\sqrt{\mu}) n$, then by reassignment at most $O(\sqrt{\mu})n$ vertices we can arrange that every vertex is good.
\end{claim}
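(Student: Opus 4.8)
The plan is to follow the reassignment scheme used in the proof of \cref{cl:6}, now adapted to the degenerate regime $|A| = O(\sqrt\mu)n$ and to the modified notions of cyclic/acceptable/good above. Throughout I would use the following consequences of $\eta$-extremality together with the fact that $G$ is an oriented graph: since the ``forward'' sets $E(A,B),E(B,C),E(C,D),E(D,A),E(D,B)$ and $E(C)$ are almost complete, each of the ``reverse'' sets $E(B,A),E(C,B),E(D,C),E(A,D),E(B,D)$ has size $O(\eta)n^2$, as do $E(A,C)$ and $E(D)$ by definition, while every edge set meeting $A$ has size $O(\sqrt\mu)n^2$ because $|A|=O(\sqrt\mu)n$.

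First I would localise the problem. After \cref{cl:3'} every vertex is acceptable, and I claim the only vertices that can fail to be good lie in $B\cup C$. An acceptable vertex of $D$ is good by definition. An acceptable vertex $x\in A$ satisfies $\deg^+_B(x)>(1-\sqrt\lambda)|B|-O(\sqrt\mu)n$, so the oriented-graph inequality $\deg^+_B(x)+\deg^-_B(x)\le|B|$ forces $\deg^-_B(x)<\sqrt\lambda|B|+O(\sqrt\mu)n$; since acceptability of $x$ already includes $A:C_{<\lambda}^{<\lambda}$, the vertex $x$ has property $A:B_{<\sqrt\lambda}C_{<\lambda}$ and is good. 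So it remains to repair the bad vertices of $B\cup C$, each by moving it --- possibly after contracting a short acceptable $BD$-balanced path through it, as in Step~$1$ of \cref{cl:4'} --- into a class where it becomes good.

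Then I would sort the bad vertices by the $\lambda$-threshold they violate. A bad $x\in C$ has $\deg^+_B(x)\gtrsim\lambda|B|$, so such vertices each contribute $\gtrsim\lambda|B|$ edges to $E(C,B)$ and hence number $O(\eta/\lambda)n=O(\sqrt\mu)n$; each is reassigned to $D$, where its acceptability type in $C$ guarantees $D:B^{>\lambda}C_{>\lambda}$ or $D:B^{>\lambda}_{>\lambda}$, so it is good. A bad $x\in B$ with $\deg^-_C(x)\gtrsim\lambda|C|$ likewise lies in $E(C,B)$, so there are $O(\sqrt\mu)n$ of these, reassigned to $C$ (where $C:C_{>\lambda}^{>\lambda}$ or $C:C_{>\lambda}D^{>\lambda}$ holds). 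This leaves the bad $x\in B$ with many neighbours \emph{inside} $B$: if $x$ is acceptable via $B:D^{>\lambda}_{>\lambda}$ then $\deg^+_D(x)\gtrsim\lambda|D|$, so there are only $O(\sqrt\mu)n$ such (bounded by $|E(B,D)|$); for the rest, acceptable via $B:C^{>\lambda}D_{>\lambda}$, I would bound their number using the standing hypothesis that $G$ has no Hamilton cycle, exactly as in \cref{cl:6}: $|B|-|D|$ of the within-$B$ edges extend, through the cyclic structure of the remaining vertices, to $|B|-|D|$ vertex-disjoint $BD$-imbalanced paths whose contraction equalises $|B|$ and $|D|$, whereupon \cref{cl:4'} yields a Hamilton cycle. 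Hence there are fewer than $|B|-|D|=O(\sqrt\mu)n$ of them, and they are reassigned (or first contracted with a short path) accordingly. In total $O(\sqrt\mu)n$ vertices move, each class changes size by $O(\sqrt\mu)n$, so the $\eta$-extremal structure and the goodness thresholds persist with enlarged implicit constants, and after one pass every vertex is good.

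The main obstacle is precisely that last family: $\eta$-extremality imposes no upper bound on $e(B)$ (nor on $e(D,B)$), so a vertex of $B$ with many neighbours inside $B$ cannot be controlled by a naive edge count, and the only available control is the ``Hamilton-cycle-or-bust'' argument inherited from \cref{cl:6}, which here must be married to \cref{cl:4'} rather than \cref{cl:4}. Getting this count right, and checking that each such vertex (or the vertex obtained after contracting a short acceptable path through it) genuinely lands in a class where it is good, is where the real case analysis lies; the oriented-graph identity $\deg^+_B(x)+\deg^-_B(x)\le|B|$ is the key extra ingredient absent from the Dirac-type treatment of \cite{keevash2009exact}.
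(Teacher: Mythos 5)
Your proposal takes a genuinely different route from the paper in one place and re-derives it in another, but it also contains two concrete gaps in the reassignment step.

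Your first observation (every acceptable $A$-vertex is automatically good, via $\deg^+_B(x)+\deg^-_B(x)\le|B|$) is exactly the paper's opening step. Where you depart from the paper is in how you bound the bad vertices. For $C$, the paper contracts $|B|-|D|$ $BD$-imbalanced paths and invokes \cref{cl:4'}; you instead observe that a bad $x\in C$ has $\deg^+_B(x)\gtrsim\lambda|B|$ and hence contributes many edges to the small set $E(C,B)$. This edge-count route is cleaner and works. For $B$, the paper does \emph{not} contract paths at all: it reassigns bad $B$-vertices one at a time, decreasing $|B|-|D|$ each time, and argues that if there were $\ge|B|-|D|$ of them, the process would terminate at $|B|=|D|$ (or pass through it) and \cref{cl:4'} would produce a Hamilton cycle. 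You instead split bad $B$-vertices into three subcategories and use path contraction only for the genuinely uncontrollable ones; this is a viable alternative, and the within-$B$-edge paths you describe are indeed acceptable, $BD$-imbalanced by one, and contractible to cyclic vertices, so \cref{cl:4'} applies.

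The gaps are in the reassignment itself. First, a small numerical slip: from $e(D,B)>|C|n/8-O(\eta)n^2$ and $|B||D|\le n^2/16+O(\eta)n^2$ you can only deduce $e(B,D)\le|A|n/8+O(\eta)n^2=O(\sqrt{\mu})n^2$ rather than $O(\eta)n^2$, and after the $O(\sqrt{\mu})n$ reassignments of \cref{cl:3'} the reverse sets are in general only $O(\sqrt{\mu})n^2$, not $O(\eta)n^2$; this affects none of your conclusions since $O(\sqrt{\mu}/\lambda)n=O(\sqrt{\mu})n$ once $\lambda$ is absorbed into the implicit constant, but the book-keeping should match the hierarchy. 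Second, and more seriously, you check that a reassigned vertex is \emph{acceptable} in its new class, but not that it is \emph{good}. Good for $C$ is acceptable plus $C:B^{<\lambda}$. A bad $x\in B$ with $\deg^-_C(x)\gtrsim\lambda|C|$ may simultaneously have $\deg^+_B(x)\gtrsim\lambda|B|$; your scheme sends it to $C$, where it still violates $C:B^{<\lambda}$ and remains bad. The paper's case analysis explicitly branches on $\deg^+_B(x)$: if it is large, $x$ goes to $D$ (where acceptability alone implies goodness); only if it is small does $x$ go to $C$. You need the same branch. Likewise, you never specify where the path-contraction category lands (``reassigned accordingly''), and the only class where those vertices --- having $\deg^-_B,\deg^-_C<\lambda$ and $\deg^+_B\gtrsim\lambda|B|$ --- can become good is $A$ (or one absorbs them via a contracted path), which again requires an explicit check of the $A$-acceptability property. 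Closing these two branches would make your argument complete.
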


Let $M$ be a maximum matching consisting of edges in $E(B,A)\cup E(B)\cup E(C,A) \cup E(C,B)$ with the property that $|V(M)\cap A|$ is as large as possible.
Say that $M\cap E(B,A)$ matches $B_A \subseteq B$ with $A_B\subseteq A$, that $M \cap E(B)$ is a matching on $B_B\subseteq B$, that $M \cap E(C,A)$ matches $C_A \subseteq C$ with $A_C \subseteq A$ and that $M \cap E(C,B)$ matches $C_B \subseteq C$ with $B_C \subseteq B$. 
Note that $e(M)=|A_B|+|A_C|+|B_B|/2+|B_C|$.
Here we also must have $e(M)<|B|-|D|$ by the same argument as that in the case $|A|=\Omega(\sqrt{\mu}n)$ together with the fact that each vertex of $A$ satisfies the property $A:B^{>1-\sqrt{\lambda}}C_{<\lambda}^{<\lambda}D_{>1-\sqrt{\lambda}}$.

\begin{claim}\label{cl:7'}
$e(M)=0$ and $A = \emptyset$.
\end{claim}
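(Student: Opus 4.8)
The plan is to follow the template of \cref{cl:7}, splitting the proof into two steps: first $e(M)=0$, and then $A=\emptyset$. We keep all the standing hypotheses of this subsection: $G$ has no Hamilton cycle, $|B|>|D|$ (by \cref{cl:4'}), every vertex is good (by \cref{cl:6'}), and $e(M)<|B|-|D|$; and we work with the hierarchy $\eta\ll\eta_1\ll\mu\ll\lambda\ll1$. The one genuinely new feature is that here $|A|=o(n)$, so the degree-counting of \cref{cl:7} has to be re-routed, and one must keep in mind that an $A$-vertex now satisfies the strong property $A:B^{>1-\sqrt\lambda}C^{<\lambda}_{<\lambda}D_{\ge1-\sqrt\lambda}$, i.e.\ it sends almost all of its edges into $B$ and receives almost all of them from $D$.

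\smallskip

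\emph{Step 1: $e(M)=0$.} Suppose $e(M)\ge1$. Exactly as in \cref{cl:7}, set $A'\coloneqq A\setminus(A_B\cup A_C)$, $B'\coloneqq B\setminus(B_A\cup B_B\cup B_C)$ and $C'\coloneqq C\setminus(C_A\cup C_B)$; maximality of $M$ forbids edges from $B'\cup C'$ into $A'\cup B'$, and since every vertex is good and every $B$-internal edge meets $V(M)\cap B$, one obtains $e(C,A),e(B,A),e(C,B),e(B)=O(\lambda n)\,e(M)$. Because $|A|$ may be tiny, the triple $(x,y,z)\in C\times B\times A$ used in \cref{cl:7} is useless (its error factor $1-O(\mu)e(M)/|A|$ no longer behaves), so I would instead count ordered triples $(x,y,z)\in C\times B\times B$ with $y\ne z$, $xy\notin E(G)$ and $yz\notin E(G)$: there are at least $|B|^2|C|-|B|e(C,B)-2|C|e(B)-O(n^2)$ of them, Ore's condition gives $\deg^+(x)+\deg(y)+\deg^-(z)\ge\tfrac{3n-3}{2}$ for each, and summing $\deg^+$ over $C$, $\deg$ over $B$ and $\deg^-$ over $B$ --- using $e(B,D)+e(D,B)\le|B||D|$ together with $e(A,B)\le|A||B|$ --- bounds the total above by $|B|^2|C|\bigl(\tfrac{3|C|-1}{2}+3|D|+2|A|\bigr)+O(\lambda n^3)\,e(M)$. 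Dividing by $|B|^2|C|$ and substituting $|A|+|C|+2|D|=n-(|B|-|D|)$ yields, up to an $O(1)$ additive error,
\[3\,(|B|-|D|)\ \le\ |A|+O(\lambda)\,e(M).\]
With $e(M)<|B|-|D|$ and $|A|<\sqrt\mu\,n$ this forces $|B|-|D|=O(\sqrt\mu\,n)$, hence $e(M)=O(\sqrt\mu\,n)$. To upgrade this to $e(M)=0$ I would invoke the contraction-plus-blow-up machinery behind \cref{cl:4'}: each $M$-edge extends, through cyclic vertices as in the paragraph preceding \cref{cl:7}, to a vertex-disjoint path meeting $B$ twice more than $D$ with both ends cyclic in one class, and contracting such paths drops $|B|-|D|$ by one apiece; since all reassignments and contractions together cost only $O(\sqrt\mu\,n)$ vertices and leave $|A|$, the ``bad'' edge counts and the super-regular structure untouched, any $e(M)\ge1$ can be driven into the hypotheses of the ``moreover'' part of \cref{cl:4'}, producing a Hamilton cycle --- a contradiction. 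Hence $e(M)=0$.

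\smallskip

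\emph{Step 2: $A=\emptyset$.} Now $E(C,B)=E(C,A)=E(B,A)=E(B)=\emptyset$. Suppose $A\ne\emptyset$. Pick $x\in C$ with $\deg^+_C(x)$ minimal (so $\deg^+_C(x)\le\tfrac{|C|-1}{2}$), $z\in A$ with $\deg^-_A(z)$ minimal (so $\deg^-_A(z)\le\tfrac{|A|-1}{2}$), and any $y\in B$. Since $E(C,B)=\emptyset$ and $E(B,A)=\emptyset$ we have $xy,yz\notin E(G)$, so adding the two instances of Ore's condition,
\[\frac{3n-3}{2}\ \le\ \deg^+(x)+\deg(y)+\deg^-(z).\]
On the other hand $E(C,A)=E(C,B)=\emptyset$ confines $N^+(x)$ to $C\cup D$, so $\deg^+(x)\le\tfrac{|C|-1}{2}+|D|$; $E(B,A)=E(C,A)=\emptyset$ confines $N^-(z)$ to $A\cup D$, so $\deg^-(z)\le\tfrac{|A|-1}{2}+|D|$; and $E(B)=E(B,A)=E(C,B)=\emptyset$ confines $N^+(y)$ to $C\cup D$ and $N^-(y)$ to $A\cup D$, so merging the two $D$-parts (as $G$ is oriented) gives $\deg(y)\le|A|+|C|+|D|$. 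Summing these estimates,
\[\frac{3n-3}{2}\ \le\ \frac{3\bigl(|A|+|C|+2|D|\bigr)}{2}-1\ =\ \frac{3\bigl(n-(|B|-|D|)\bigr)}{2}-1\ \le\ \frac{3(n-1)}{2}-1,\]
where the last step uses $|B|-|D|\ge1$ --- a contradiction. Hence $A=\emptyset$, and the claim follows.

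\smallskip

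\emph{Main obstacle.} Step 2 is short and entirely unaffected by the degeneracy. The difficulty is concentrated in Step 1: the clean counting of \cref{cl:7} degrades once $|A|=o(n)$, because the (genuinely present) $\approx|A||B|$ edges from $A$ into $B$ sit inside the main term and leave a residual slack of order $|A|$; so counting alone yields only $|B|-|D|=O(\sqrt\mu\,n)$ rather than $e(M)=0$. Squeezing out the last bit forces one to rerun the contraction/blow-up argument of \cref{cl:4'} on top of the $O(\sqrt\mu\,n)$ reassignments already made, and to check that every accumulated error term stays comfortably inside $\eta\ll\eta_1\ll\mu\ll\lambda\ll1$; this bookkeeping, rather than any single new idea, is the real crux.
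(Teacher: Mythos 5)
Your proof divides into two steps. Step~2 ($A=\emptyset$ given $e(M)=0$) is correct and is in fact a slight simplification of the paper's finish: by taking an \emph{ordered triple} $(x,y,z)\in C\times B\times A$ and adding the two Ore inequalities, you avoid the upper bound $|B|\le(n+3)/4$ (the paper's equation~(\ref{equ:eBu})) that the paper needs for its pair-based comparison $\deg^+(u)+\deg^-(v)\le(3n-7)/4$. Your counting in the first half of Step~1 is also sound and lands on essentially the paper's intermediate inequality~(\ref{equ:lowerA}), namely $|A|\ge(3-O(\lambda))e(M)$, hence $e(M)=O(\sqrt{\mu})n$.

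The gap is in the way you close Step~1. You claim that since $e(M)=O(\sqrt\mu\,n)$, the contraction machinery behind \cref{cl:4'} can ``drive any $e(M)\ge1$ into the hypotheses of the moreover part.'' This cannot work. Each contraction of a $BD$-unbalanced path built from an $M$-edge reduces $|B|-|D|$ by exactly one, and you only have $e(M)$ such edges available. But the standing assumption of this subsection (established in the paragraph preceding \cref{cl:7}, by this very contraction argument) is that $e(M)<|B|-|D|$; after contracting all $e(M)$ paths you are left with $|B|-|D|-e(M)\ge 1>0$, so the ``moreover'' part of \cref{cl:4'} (which requires $|B|=|D|$) never becomes applicable, and no contradiction arises. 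In short, the contraction argument yields $e(M)<|B|-|D|$, not $e(M)=0$; the two statements are compatible whenever $|B|-|D|\ge 2$. The paper does not use contraction here at all --- after obtaining $e(M)=O(\sqrt\mu\,n)$ it locates a low-outdegree vertex $u\in C\setminus(C_A\cup C_B)$ and, through the delicate partition of $M$ into $M_1=M_{11}\cup M_{12}$, $M_2$, $M_3$ together with a carefully chosen maximum matching (maximizing $|V(M)\cap A|$), a low-indegree vertex $v\in A_0\cup A_{11}$ with $uv\notin E(G)$, and derives $\deg^+(u)+\deg^-(v)\le\frac{3n-7}{4}-\bigl(\frac14-O(\lambda)\bigr)e(M)$, contradicting Ore. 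That structural analysis is the genuine content of the claim and has no substitute in your proposal.
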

\begin{proof}

Assume to the contrary that $e(M)\ge 1$. 
Define
$A'\coloneqq A\setminus (A_B \cup A_C)$, $B'\coloneqq B \setminus (B_A \cup B_B\cup B_C)$ and $C'\coloneqq C \setminus (C_A \cup C_B)$. By the maximality of $M$, there are no edges from $B'\cup C'$ to $A'$ or from $B'\cup C'$ to $B'$. Since $|A|=O(\sqrt{\mu})n$ and all vertices are good, it follows that
\begin{align}
 e(C,A)&\le e(C_A\cup C_B, A)+e(C,A_B\cup A_C)\nonumber\\
 &\le |C_A\cup C_B|\cdot|A|+|A_B\cup A_C|\cdot(\lambda+O(\sqrt{\mu}))|C|\nonumber\\
 &\le e(M)|A|+e(M)(\lambda+O(\sqrt{\mu}))|C|\nonumber\\
 &\le \lambda n \cdot e(M).\label{equ:CA}
\end{align}
Similarly, we can obtain that 
\begin{align}
 e(B,A)\le e(B_A\cup B_B\cup B_C, A)+e(B,A_B\cup A_C)\le \lambda n \cdot e(M)\label{equ:BA}
\end{align}
and
\begin{align}
 e(C,B)\le e(C_A\cup C_B, B)+e(C,B_A\cup B_B\cup B_C)\le 2\lambda n \cdot e(M).\label{equ:CB}
\end{align}
By the maximality of $M$, every edge in $B$ is incident to a vertex in $B_A\cup B_B\cup B_C$.
Moreover, since every vertex in $B$ is good, it follows that
\begin{align}
e(B)\le |B_A\cup B_B\cup B_C|\cdot 2(\lambda+O(\sqrt{\mu}))|B|\le 2 \lambda n \cdot e(M).\label{equ:B}
\end{align}

Now we give more precise bounds of the size of $A,B,C,D$ with respect to $e(M)$. First we compute the upper bound of $|B|$. There are exactly $|B|(|B|-1)-e(B)$ pairs $(x,y)$ with $x\neq y$ such that $x,y\in B$ and $xy\notin E(G)$. So we have
\begin{align}
    \frac{3n-3}{4} \left(|B|(|B|-1)-e(B)\right) &\le \sum_{\substack{x,y\in B,x\neq y\\xy\notin E(G)}}\deg^+(x)+\deg^-(y)\nonumber \\
    &\le (|B|-1)\sum_{x\in B} \deg(x)\nonumber \\
    &\le |B|(|B|-1)(|A| + |C| + |D|) + 2|B|e(B)).\nonumber
\end{align}
Hence
\begin{align}
    \frac{3n-3}{4} &\le \frac{|B|(|B|-1)(|A| + |C| + |D|) + 2|B|e(B))}{|B|(|B|-1)-e(B)}\nonumber \\
    &\le |A| + |C| + |D| + \frac{e(B)(|A| + 2|B| + |C| + |D|)}{|B|(|B|-1)-e(B)}\nonumber \\
    &\le |A| + |C| + |D| + O(\lambda) e(M).\nonumber
\end{align}
It follows that 
\begin{align}
    |B| &\le \frac{n+3}{4} + O(\lambda)e(M).\label{equ:eBu}
\end{align}

Then we compute the upper bound of $|C|$.
Clearly there are $|B||C|- e(C,B)$ pairs $(x,y)$ with $x\in C$, $y\in B$, and $xy\notin E(G)$.
So we have
\begin{align*}
    \frac{3n-3}{4} (|B||C|- e(C,B)) &\le \sum_{\substack{x\in C,y\in B,\\xy\notin E(G)}}\deg^+(x)+\deg^-(y)\\
    &\le |B| \sum_{x\in C} \deg^+(x) + |C| \sum_{y\in B} \deg^-(y).
\end{align*}
Similarly we can get that
\[\frac{3n-3}{4} \le |A| + \frac{|C|}{2} + 2|D| + O(\lambda) e(M).\]
Using that $e(M) \le |B| - |D| - 1$ we have
\begin{align}
    |C| &\le \frac{n-1}{2} - 2e(M) + O(\lambda)e(M).\label{equ:eCu}
\end{align}
Together with (\ref{equ:eBu}) we can further get that
\begin{align}
    |A| \ge (3 -O(\lambda))e(M).\label{equ:lowerA} 
\end{align}
which implies that $e(M)=O(\sqrt{\mu})n$.

Our aim is to find exactly a vertex $u\in C$ and a vertex $v\in A$ such that $uv\notin E(G)$, and moreover, $\deg^+(u) + \deg^-(v)$ does not meet Ore-degree condition, which gives a contradiction.

Let us first find a vertex in $C\setminus (C_A\cup C_B)$ whose outdegree is as small as possible. For a vertex $x\in C_A\cup C_B$, we may consider all $y\in B$ such that $xy\notin E(G)$. ByOre-degree condition we have
\begin{align*}
    \frac{3n-3}{4} (|B| - \deg^+_B(x)) &\le \sum_{y\in B,xy\notin E(G)} \deg^+(x) + \deg^-(y)\\
    &\le (|B| - \deg^+_B(x)) (\deg^+(x) + |A| + |D|) + e(B) + e(C,B).
\end{align*}
Notice that $x$ is a good vertex, we have $\deg^+_B(x) \le O(\lambda) n$. And hence
\begin{align}
    \deg^+(x) \ge \frac{3n-3}{4} - |A| - |D| - O(\lambda) e(M).\label{eq:CinMoutdeg}
\end{align}

On the other hand, we can give an upper bound to the sum of the outdegree of all vertices in $C$. We have
\begin{align}
    \sum_{x\in C} \deg^+(x) \le \binom{|C|}{2} + |C||D| + e(C,A) + e(C,B).\label{eq:Cavd}
\end{align}
Now combining (\ref{eq:CinMoutdeg}) and (\ref{eq:Cavd}), there exists a vertex $u\in C\setminus (C_A\cup C_B)$ whose outdegree satisfies that
\begin{align}
    \deg^+(u) &\le \frac{\sum_{x\in C} \deg^+(x) - \sum_{x\in C_A\cup C_B} \deg^+(x)}{|C\setminus (C_A\cup C_B)|}\nonumber \\
    &\le \frac{\binom{|C|}{2} + |C||D| + e(C,A) + e(C,B) - e(M) (\frac{3n-3}{4} - |A| - |D| - O(\lambda) e(M))}{|C| - e(M)}\nonumber \\
    &\le \frac{|C|-1}{2} + |D| + \frac{e(M) \left(\frac{|C|-1}{2} + |D| - \frac{3n-3}{4} + |A| + |D| + O(\lambda) n\right)}{|C| - e(M)}\nonumber \\
    &\le \frac{|C|-1}{2} + |D| + O(\lambda) e(M).\label{eq:CnotinMoutdeg}
\end{align}

It remains to find a vertex in $A$. Let $M_1$ be the set of all edges $xy\in E(M)$ such that $y\in A$ and $N^-_{B\cup C}(y) \subseteq V(M)$, that is, every inneighbor of $y$ in $B\cup C$ must belong to $M$. Let $M_2$ be the set of all edges $xy\in E(M)$ such that $y\in A$ and $N^-_{B\cup C}(y) \setminus V(M) \neq \emptyset$, and let $M_3\coloneqq M \setminus (M_1\cup M_2)$. By the maximality of $M$ and the definition of $M_1$, for every vertex $x\in A\setminus V(M_2)$ we have $ux\notin E(G)$.

We still need some additional assumptions and definitions in order to find a suitable vertex in $A$. Let $M$ be such that the size of $A \cap V(M)$ is maximum, i.e. $e(M_1)+e(M_2)$ is as large as possible, subject to that $M$ is a maximum matching. Let $M_{11}$ be the set of all edges $xy\in E(M_1)$ such that $N^+_A(x)\setminus V(M) \neq \emptyset$, and let $M_{12}\coloneqq M_1\setminus M_{11}$. Write $A_0\coloneqq A\setminus V(M)$ and $A_{11}\coloneqq A_0 \cup (A\cap V(M_{11}))$. We then show that there is a vertex $v$ in $A_0$ or $A_{11}$ that satisfies our requirement.

For every vertex $z\in A_0$, the inneighbors of $z$ must be contained in $A_0\cup D\cup V(M)$. In fact, not every vertex in $M$ can be an inneighbor of $z$. For $xy\in E(M_{12})$, we have $xz\notin E(G)$ by the definition of $M_{12}$. For $xy\in E(M_2)$, we claim that $xz\notin E(G)$, since otherwise we can find $y'\in N^-_{B\cup C}(y) \setminus V(M)$ such that $y'yxz$ is an augmenting path for $M$, which contradicts the maximality of $M$. For $xy\in E(M_3)$, we claim that $xz,yz\notin E(G)$, since otherwise we can remove $xy$ and add $xz$ or $yz$ to $M$, which contradicts the maximality of $A\cap V(M)$. We have
\[\sum_{z\in A_0} \deg^-(z) \le \binom{|A_0|}{2} + |A_0||D| + |A_0| (2e(M_{11}) + e(M_{12}) + e(M_2)).\]
Hence there exists $v_1\in A_0$ such that
\begin{align}
    \deg^-(v_1) \le \frac{|A|-1}{2} + |D| + \frac{3}{2} e(M_{11}) + \frac{1}{2} e(M_{12}) + \frac{1}{2} e(M_2).\label{eq:mindegA0}
\end{align}

On the other hand, let us consider the indegree of $w$ for which $w'w\in E(M_{11})$. For $xy\in M_2$, we claim that $xw\notin E(G)$, since otherwise we can find $w''\in N^+_A(w')$ and $y'\in N^-_{B\cup C}(y) \setminus V(M)$ such that $y'yxww'w''$ is an angmenting path for $M$, a contradiction. Similarly for $xy\in E(M_3)$, we have $xw,yw\notin E(G)$ by the maximality of $A\cap V(M)$. So we have
\[\sum_{w\in A_{11}} \deg^-(w) \le \binom{|A_{11}|}{2} + |A_{11}||D| + |A_{11}| (e(M_{11}) + 2e(M_{12}) + e(M_2)).\]
Hence there exists $v_2\in A_{11}$ such that
\begin{align}
    \deg^-(v_2) \le \frac{|A|-1}{2} + |D| + e(M_{11}) + \frac{3}{2} e(M_{12}) + \frac{1}{2} e(M_2).\label{eq:mindegA11}
\end{align}

Now combining (\ref{eq:mindegA0}) and (\ref{eq:mindegA11}), and noting that $e(M)\ge e(M_{11}) + e(M_{12}) + e(M_2)$, we can find $v\in \{v_1,v_2\}$ such that
\begin{align}
    \deg^-(v) \le \frac{|A|-1}{2} + |D| + \frac{5}{4} e(M).\label{eq:AnotinM2indeg}
\end{align}

So far we have found two vertices $u\in C$ and $v\in A$ such that $uv\notin E(G)$, where $\deg^+(u)$ and $\deg^-(v)$ follow from (\ref{eq:CnotinMoutdeg}) and (\ref{eq:AnotinM2indeg}), respectively. We have
\begin{align*}
    \deg^+(u) + \deg^-(v) &\le \frac{|A|}{2} + \frac{|C|}{2} + 2|D| - 1 + \left( \frac{5}{4} + O(\lambda) \right) e(M)\\
    &\le \frac{|A|}{2} + \frac{|C|}{2} + \frac{3}{2} (|B| - e(M) - 1) + \frac{|D|}{2} - 1 + \left( \frac{5}{4} + O(\lambda) \right) e(M)\\
    &\le \frac{n}{2} + |B| - \frac{5}{2} - \left( \frac{1}{4} - O(\lambda) \right) e(M)\\
    &\le \frac{3n-7}{4} - \left( \frac{1}{4} - O(\lambda) \right) e(M),
\end{align*}
which contradictsOre-degree condition. Note that the above proof still holds when $e(M)=0$ and $A\neq \emptyset$. Therefore, we have $e(M) = 0$ and $A = \emptyset$.
\end{proof}

We are now already to prove Theorem~\ref{thm:extremal} under the asumption that $|A| < \sqrt{\mu}\cdot n$. Since $e(M) = 0$, we have $xy,yz\notin E(G)$ for every triple $(x,y,z)$ with $x\in C$ and $y,z\in B$. So we may choose a triple such that
\begin{align*}
    \frac{3n-3}{2} &\le \deg^+(x) + \deg(y) + \deg^-(z)\\
    &\le \left( \frac{|C|-1}{2} + |D| \right) + (|C| + |D|) + |D|\\
    &\le \frac{3}{2} |C| + \frac{3}{2} (|B| - 1) + \frac{3}{2} |D| - \frac{1}{2}\\
    &=\frac{3n-4}{2},
\end{align*}
a contradiction.
\section{Concluding remarks}
It is interesting to seek more applications of the technique we introduced here, especially the absorbing method dealing with the non-extremal case.
Given an oriented graph $G$, for each $v \in V(G)$, let $\delta^*(G) \coloneqq\delta(G) + \delta^+(G) + \delta^-(G)$. H\"{a}ggkvist~\cite{haggkvist1993hamilton} conjectured the following degree sum condition for Hamilton cycles in oriented graphs:
\begin{conj}
Every oriented graph $G$ on $n$ vertices with $\delta^*(G) > (3n-3)/2$ contains a Hamilton cycle.
\end{conj}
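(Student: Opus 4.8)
The plan is to follow the architecture of the proof of Theorem~\ref{main}: fix a small $\eta>0$, prove, for $n$ sufficiently large, a dichotomy ``$G$ contains a Hamilton cycle or $G$ is $\eta$-extremal'', and then dispose of the extremal case separately. The first task is to pin down the right notion of extremality for $\delta^*$. Since $\delta^*(G)=\delta(G)+\delta^+(G)+\delta^-(G)$ mixes three quantities, the extremal family should contain both the four-part ``$|B|>|D|$'' construction of Proposition~\ref{prop:ex} and a robust version of the $\delta^0$-extremal examples of Keevash--K\"uhn--Osthus~\cite{keevash2009exact} (and possibly graphs interpolating between them); $G$ is $\eta$-extremal if adding and deleting $O(\eta)n^2$ edges turns it into one of these. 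A short preliminary lemma, playing the role of Propositions~\ref{le:delta0}--\ref{lm:emptyset}, records the basic consequences of the hypothesis: from $\delta^*(G)\le 2\delta(G)$ one gets $\delta(G)>(3n-3)/4$, hence a min total-degree condition; double-counting $e(G)\le\binom{n}{2}$ gives $\delta^+(G),\delta^-(G)\le (n-1)/2$; and combining these with $\delta^+(G)+\delta^-(G)>(3n-3)/2-\delta(G)$ yields $\delta^0(G)>n-1-\delta(G)$, which is linear in $n$ unless $\delta(G)$ is very close to $n-1$, i.e.\ unless $G$ is nearly a tournament.

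\textbf{Non-extremal case.} Here the plan is to reprove, with $\delta^*$ replacing the Ore condition, the four lemmas of Section~\ref{proof-non ex}---a Connecting Lemma, a Reservoir Lemma, an Absorbing Lemma and a Covering Lemma---and then combine them exactly as in the proof of Theorem~\ref{thm:non-extremal}. The Reservoir Lemma and the probabilistic scaffolding (Lemmas~\ref{lem:absorberfamily} and~\ref{lem:chernoff}) transfer verbatim once the Connecting Lemma is available. For the Connecting Lemma I would show that any non-adjacent pair $(u,v)$ is joined by $\Omega(n^k)$ directed $(u,v)$-paths of length $k+1$ for some $k\le 3$; the proof of Lemma~\ref{lem:connectinglemma} adapts, with the appeals to the Ore inequality replaced by the pair of facts $\delta(G)>(3n-3)/4$ and $\delta^+(G)+\delta^-(G)>(3n-3)/2-\delta(G)$, after a small case split according to whether $\delta^0(G)=\Omega(n)$ (so that $\deg^+(u)$ and $\deg^-(v)$ are individually linear) or $G$ is nearly a tournament (where density makes connecting immediate). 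With the Connecting Lemma in hand the double-step absorption of Section~\ref{subsec:5.3} goes through: one re-derives the analogues of inequalities \eqref{eq:c1a1}--\eqref{eq:vc1} using the $\delta^*$-facts above, and each case of the proof of Lemma~\ref{lm:absorbing} either produces a strong or weak absorber of $v$ or forces $G$ into the extremal family. Finally the Covering Lemma is obtained as in Section~\ref{s5}: after a $\delta^*$-version of Lemma~\ref{lem:reduceR} (so that the reduced oriented graph $R$ inherits a slightly weakened $\delta^*$-condition), one either finds a $1$-factor of $R$ via Hall's theorem and blows it up by Lemma~\ref{lem:blowup}, or the failure of Hall's condition exposes the extremal partition of $V(G)$.

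\textbf{Extremal case and the main obstacle.} When $G$ is $\eta$-extremal I would mirror Section~\ref{section-extremal}: extract the partition $A,B,C,D$, split into cases on the relative sizes of $A,C$ and of $B,D$, introduce the $\delta^*$-analogues of \emph{cyclic}, \emph{acceptable} and \emph{good} vertices, reassign $O(\eta)n$ vertices so that every vertex is good, and then either contract $BD$-balanced paths and apply the Blow-up Lemma to produce a Hamilton cycle (as in Claims~\ref{cl:4} and~\ref{cl:4'}), or force the structure so close to an extremal configuration that $\delta^*(G)>(3n-3)/2$ fails. I expect this extremal analysis to be the principal obstacle. First, one must correctly enumerate \emph{all} near-extremal configurations; unlike the Ore setting, where Proposition~\ref{prop:ex} is essentially the unique obstruction, the mixed parameter $\delta^*$ is saturated by several genuinely different families, and the stability statement produced by the Covering Lemma has to be flexible enough to recognise each of them. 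Second, the contradictions that close the extremal case rest on playing the out-degree of one vertex against the in-degree of a non-neighbour; under the Ore condition this is a single clean inequality, whereas $\delta^*$ controls $\deg^+(x)+\deg^-(y)$ only through $\delta^+(G)+\delta^-(G)\ge(3n-3)/2-\delta(G)$, so one must additionally track total degrees $\deg(w)=\deg^+(w)+\deg^-(w)$ of auxiliary vertices and work with three-term sums $\deg^+(x)+\deg(y)+\deg^-(z)$ throughout---the device that already appears in Claims~\ref{cl:7} and~\ref{cl:7'} would become the norm rather than the exception, and keeping the resulting bookkeeping tight enough to reach a contradiction in every case is where the real difficulty lies.
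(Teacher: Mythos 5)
The statement you are attempting to prove is H\"{a}ggkvist's conjecture, which the paper does \emph{not} prove. It appears only in the Concluding Remarks (\Cref{s0} onward), where the authors state it is known asymptotically (Kelly--K\"{u}hn--Osthus) and merely express the hope that the techniques of the paper ``could be helpful'' toward a full resolution. There is thus no proof in the paper against which to compare your proposal, and the conjecture remains open.

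Your text is an honest roadmap, not a proof. You explicitly flag the extremal analysis as ``the principal obstacle'' and ``where the real difficulty lies,'' and you do not carry it out; the enumeration of near-extremal families for $\delta^*$, the reassignment claims, and the contradiction at the end of each case are all left open. Beyond the extremal case, the claim that the non-extremal lemmas ``transfer verbatim'' or ``adapt'' also hides a real gap: the Ore hypothesis gives a \emph{pointwise} lower bound $\deg^+(u)+\deg^-(v)\ge(3n-3)/4$ for the specific non-adjacent pair $(u,v)$ under consideration, and the proofs of \Cref{lem:connectinglemma}, inequalities \eqref{eq:c1a1}--\eqref{eq:vc1}, and the calculations inside \Cref{lem:almostcover} all invoke exactly this pointwise bound. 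The $\delta^*$ hypothesis gives only $\deg^+(u)+\deg^-(v)\ge\delta^+(G)+\delta^-(G)>(3n-3)/2-\delta(G)$, which is as weak as $(n-1)/2$ when $\delta(G)$ is close to $n-1$; your remedy (a case split on whether $\delta^0(G)=\Omega(n)$ versus ``$G$ nearly a tournament'') is plausible but unproved, and it would propagate through every lemma in Sections \ref{proof-non ex}--\ref{s5}. Your preliminary deductions ($\delta(G)>(3n-3)/4$, $\delta^+,\delta^-\le(n-1)/2$, $\delta^0(G)>n-1-\delta(G)$) are correct and are a sensible starting point, but as it stands the proposal is a research programme with the hardest steps outstanding, not a proof of the conjecture.
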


This conjecture has been confirmed asymptotically in~\cite{kelly2008dirac}.
We believe our methods could be helpful in proving a stable version of the above conjecture, offering hope for a full resolution of the conjecture for large graphs. 

\bibliographystyle{abbrv}
\bibliography{ref}

\begin{appendix}
\section*{Appendix: supplementary proofs}
\label{sec:appendix}
For completeness, we give the proof of Claims~\ref{cl:3},~\ref{cl:6},~\ref{cl:3'},~\ref{cl:6'} in this Appendix.
\begin{proof}[Proof of Claim~\ref{cl:3}]
     For any vertex $x$, \cref{le:delta0} guarantees that one can always find $P(i)$ and $P(j)$ such that $|N^+(x)\cap P(i)| \ge n/32$ and $|N^-(x)\cap P(j)| \ge n/32$ for some $1\le i,j\le 4$. There are at most $16$ possible positions of $P(i)P(j)$, each of which corresponds to one case in the definition of acceptable vertices. Hence we can put $x$ into the correct position such that it becomes acceptable. Since there are at most $O(\eta_1)n$ non-cyclic vertices, the reassignment operations do not affect the properties of all cyclic vertices by increasing the hidden constant in the $O(\eta_1)n$-notation if necessary.
\end{proof}

\begin{proof}[Proof of Claim~\ref{cl:6}]
Firstly, we prove that the number of bad vertices in $A$ and $C$ is at most $O(\eta_1)n$. To be specific,
for each of the properties, $A : C_{>\mu}$, $A : B_{>\mu}$, $C : A^{>\mu}$, $C: B^{>\mu}$,
there are fewer than $|B|-|D|$ vertices with that property. Suppose to the contrary that, for example, we can find $|B|-|D| \eqqcolon t$ vertices $v_1, \ldots , v_t$ in $A$ having the property $A : C_{>\mu}$. Select distinct cyclic vertices $v^-_1 ,\ldots, v^-_t$ in $C$ with $v^-_i\in N^-(v_i)$ for $1 \le i\le t$. This can be achieved greedily, as $|B|-|D| = O(\eta_1)n$ and $\eta_1\ll \mu$. Since $v_1, \ldots, v_t$ are acceptable vertices in $A$ we can greedily select distinct cyclic vertices $v^+_i\in N^+(v_i)\cap (A\cup B)$. For each $i$ such that $v^+_i\in B$ let $b^+_i\coloneqq v^+_i$. For each $i$ such that $v^+_i\in A$ select $b^+_i$ to be a cyclic vertex in $N^+(v^+_i)\cap B$; we can ensure that $b^+_1, \ldots , b^+_t$ are distinct. Similarly we can select cyclic vertices $b^-_i\in N^-(v^-_i)\cap B$, which are distinct from each other and from $b^+_1, \ldots, b^+_t$. Thus we have constructed $t=|B|-|D|$ vertex disjoint paths $P_1, \ldots, P_t$, where $P_i$ starts at $b^-_i$ in $B$, goes through $C$, then it uses $1$ or $2$ vertices of $A$, and it ends at $b^+_i$ in $B$. Consider a new oriented graph $H$, obtained from $G$ by contracting these paths.
Note that the vertices of $H$ corresponding to the paths $P_i$ are acceptable and the analogues of $B$ and $D$ in $H$ now have equal size. Therefore we can apply Claim~\ref{cl:4} to find a Hamilton
cycle in $H$. This corresponds to a Hamilton cycle in $G$, which contradicts our assumption.
Therefore there are less than $|B|-|D|$ vertices with property $A:C_{>\mu}$.

The statement for property $C:A^{>\mu}$ follows by the same argument above, as we can again start by finding a matching of size $|B|-|D|$ consisting of edges directed from $C$ to $A$. The arguments for
the other two properties are also similar. For instance, if we have $|B|-|D|$ vertices $v_1, \ldots , v_t$ in $A$ having property $A : B_{>\mu}$, we can find a matching $b_1^-v_1,\ldots,b_t^-v_t$ of edges directed from $B$ to $A$ such that the $b_i^{-}$ are cyclic. We can extend this to $|B|-|D|$ vertex disjoint paths $P_1,\ldots, P_t$, where $P_i$ starts with the edge $b_i^-v_i$, it then either goes directly to a cyclic vertex in $B$ or uses one more vertex from $A$ before it ends at a cyclic vertex in $B$. Now we construct a new oriented graph $H$ similarly as before and find a Hamilton cycle in $H$ and then in $G$.

Secondly, we show that there are fewer than $|B|-|D|$ bad vertices in $B$. Suppose that there are at least $|B|-|D|$ bad vertices in $B$. The idea here is to prove we can reassign bad vertices of $B$ one by one until satisfying the condition of Claim~\ref{cl:4}, and then $G$ has a Hamilton cycle, which is a contradiction. The key of this proof is to deal with the circumstance that $|B| -|D|$ goes from $+1$ to $-1$ if a vertex $x$ is moved from $B$ to $D$ during this process by using the `moreover' part of Claim~\ref{cl:4}. We now first analyze the degree of bad vertices. Let $x$ be a bad vertex in $B$. 

\textbf{Case 1: $|N^+(x)\cap A| > \mu|A|$ or $|N^+(x)\cap B| > \mu|B|$.} If $|N^-(x) \cap C| > \mu|C|$ or $|N^-(x) \cap B| > \mu|B|$, then by reassigning $x$ to $D$ and we have decreased $|B|-|D|$ by $2$. Otherwise, we have $|N^-(x) \cap C| \le \mu|C|$ and $|N^-(x) \cap B|\le \mu|B|$, then we must have $|N^-(x) \cap A| \ge \mu|A|$ or $|N^-(x) \cap D| \ge \mu|D|$ by Proposition~\ref{le:delta0}. We now reassign $x$ to $A$ and $|B|-|D|$ will be decreased by $1$.

\textbf{Case 2: $|N^+(x)\cap A| \le \mu|A|$ and $|N^+(x)\cap B| \le \mu|B|$.} This implies that $|N^+(x) \cap C| \ge \mu|C|$ or $|N^+(x) \cap D| \ge \mu|D|$ by Proposition~\ref{le:delta0}.
Moreover, since $x$ is a bad vertex in $B$ we have $|N^-(x)\cap B|\ge \mu|B|$ or $|N^-(x)\cap C|\ge \mu|C|$.
Then by reassigning $x$ to $C$ and we have decreased $|B|-|D|$ by $1$.

Note that all vertices after the operations above are still acceptable, and we have decreased $|B| - |D|$ by $1$ or $2$ for each reassignment.
Since the number of bad vertices in $B$ is at least $|B|-|D|$, during the process we must have either $|B|$ and $|D|$ become equal or $|B|- |D|$ goes from $+1$ to $-1$ if a vertex $x$ is moved from $B$ to $D$.
In this case we claim that we can put $x$ into $A$ or $C$ to achieve the following property: every vertex except $x$ is acceptable, there is at least one acceptable edge going into $x$ and at least one acceptable edge coming out of $x$.
Therefore we can apply Claim~\ref{cl:4} to find a Hamilton cycle in $G$, which would be a contradiction. 

It remains to show that, when $|B|-|D|$ goes from $+1$ to $-1$ if we move $x$ from $B$ to $D$, we can put $x$ into $A$ or $C$ to achieve the `moreover' part of Claim~\ref{cl:4}. 
To see this, note that $x$ is acceptable even though $x$ is a bad vertex of $B$, this implies that $N^-(x) \cap A\neq\emptyset$ or $N^-(x)\cap D\neq\emptyset$ by the definition of acceptable vertices. 
If $N^+(x)\cap A\neq\emptyset$ or $N^+(x) \cap B\neq\emptyset$ we can put $x$ into $A$. 
Otherwise $N^+(x) \cap A=\emptyset$ and $N^+(x)\cap B=\emptyset$.
So we have $N^+(x) \cap C\neq\emptyset$ or $N^+(x) \cap D\neq\emptyset$ by Proposition~\ref{le:delta0}.
Furthermore, since $x$ is a bad vertex in $B$, we have $N^-(x)\cap B\neq \emptyset$ or $N^-(x)\cap C\neq \emptyset$.
Then we can put $x$ into $C$, as required.

Now we are ready to show that at most $O(\eta_1)n$ vertices can be arranged such that every vertex is good. Notice that all vertices in $D$ are good, so there are at most $O(\eta_1)n$ bad vertices in $G$. Now we reassign all bad vertices as follows. 

Let $x\in A\cup B\cup C$ be a bad vertex. 

\textbf{Case 1: $x\in B\cup C$ with $|N^+(x)\cap A| > \mu|A|$ or $|N^+(x)\cap B| >\mu|B|$.} If $|N^-(x) \cap C| > \mu|C|$ or $|N^-(x) \cap B| > \mu|B|$, then we can reassign $x$ to $D$. Otherwise, we have $|N^-(x) \cap C| \le \mu|C|$ and $|N^-(x) \cap B|\le \mu|B|$, then we must have $|N^-(x) \cap A| \ge \mu|A|$ or $|N^-(x) \cap D| \ge \mu|D|$ by Proposition~\ref{le:delta0}. We now reassign $x$ to $A$.

\textbf{Case 2: $x\in A\cup B$ with $|N^-(x)\cap B|\ge \mu|B|$ or $|N^-(x)\cap C|\ge \mu|C|$.}  If $|N^+(x)\cap A|\ge \mu|A|$ or $|N^+(x)\cap B|\ge \mu|B|$, then we reassign $x$ to $D$. Otherwise, we have $|N^+(x)\cap A|\le \mu|A|$ and $|N^+(x)\cap B|\le \mu|B|$. It implies that $|N^+(x) \cap C| \ge \mu|C|$ or $|N^+(x) \cap D| \ge \mu|D|$, and then we reassign $x$ to $C$.

We have thus proved the statement of this claim.
\end{proof}

\begin{proof}[Proof of Claim~\ref{cl:3'}]
    For any vertex $x$, \cref{le:delta0} guarantees that one can always find $P(i)$ and $P(j)$ such that $x$ has sufficiently many outneighbors in $P(i)$ and inneighbors in $P(j)$. There are $9$ possible positions of $P(i)P(j)$. Unless $P(i)=B, P(j)=D$ the vertex $x$ can be reassigned to one of $B,C,D$ such that it becomes an acceptable vertex. So we may assume that $x$ has the property $B^{>\lambda}D_{>\lambda}$ and is not acceptable with respect to $B,C,D$. This straightly implies that $x$ has that property $B_{<\lambda}C^{<\lambda}_{<\lambda}D^{<\lambda}$.
    
    Consider all vertices $y\in C$ with $yx\notin E(G)$. ByOre-degree condition we have
    \begin{align*}
        \frac{3n-3}{4}((1-\lambda)|C| - O(\sqrt{\mu})n) &\le \sum_{y\in C,yx\notin E(G)} \deg^+(y) + \deg^-(x)\\
        &\le |C| \deg^-(x) + e(C) + e(C,D) + e(C,A) + e(C,B)\\
        &\le |C| (\deg^-_B(x) + 3\lambda n) + \binom{|C|}{2} + |C||D| + O(\sqrt{\mu})n^2.
    \end{align*}
    Hence we have
    \[\deg^-_B(x) \ge \frac{3n}{4} - \frac{|C|}{2} - |D| - O(\lambda)n \ge \frac{n}{4} - O(\lambda)n,\]
    that is, $x$ has the property $B^{>1-\sqrt{\lambda}}$. Similarly we can prove that $x$ has the property $D_{>1-\sqrt{\lambda}}$. We finish the proof of this claim.
\end{proof}

\begin{proof}[Proof of Claim~\ref{cl:6'}]
By Claim~\ref{cl:3'}, each vertex in $A$ have the property $A:B^{>1-\sqrt{\lambda}}C_{<\lambda}^{<\lambda}D_{>1-\sqrt{\lambda}}$. It follows that $\deg_B^-(x)\le (\sqrt{\lambda}+O(\mu))|B|$, which means that all vertices in $A$ are good.

Firstly, we prove that the number of bad vertices in $C$ is at most $|B|-|D|$. To be specific, there are fewer than $|B|-|D|$ vertices with $C :B^{>\lambda}$. Let $t=|B|-|D|$. We find $t$ vertices $v_1, \ldots , v_t$ in $C$ having the property $C:B^{>\lambda}$. We select greedily distinct cyclic outneighbors of these $t$ vertices $v^+_1 ,\ldots, v^+_t$ in $B$ and let $b^+_i\coloneqq v^+_i$.
Since $v_1, \ldots, v_t$ are acceptable vertices in $C$, we can greedily select distinct cyclic vertices $v^-_i\in N^-(v_i)\cap (B\cup C)$. For each $i$ such that $v^-_i\in B$, let $b^-_i\coloneqq v^-_i$. For each $i$ such that $v^-_i\in C$, select $b^-_i$ to be a cyclic vertex in $N^-(v^-_i)\cap B$; we can ensure that $b^-_1, \ldots , b^-_t$ are distinct. Thus we have constructed $t=|B|-|D|$ vertex-disjoint paths $P_1, \ldots, P_t$, where $P_i$ starts at $b^-_i$ in $B$, goes through $C$, then it uses $1$ or $2$ vertices of $C$, and it ends at $b^+_i$ in $B$. Consider a new oriented graph $H$, obtained from $G$ by contracting these paths.
Note that the vertices of $H$ corresponding to the paths $P_i$ are acceptable and the analogues of $B$ and $D$ in $H$ now have equal size. Therefore we can apply Claim~\ref{cl:4'} to find a Hamilton
cycle in $H$. This corresponds to a Hamilton cycle in $G$, which contradicts our assumption.
Therefore there are less than $|B|-|D|$ vertices with property $C:B^{>\lambda}$.

Secondly, we show that the number of bad vertices in $B$ is at most $|B|-|D|$. Suppose that there are at least $|B|-|D|$ bad vertices in $B$. The idea here is to prove we can reassign bad vertices of $B$ one by one until satisfying the condition of Claim~\ref{cl:4'}, and then $G$ has a Hamilton cycle, which is a contradiction. The key of this proof is to deal with the circumstance that $|B| -|D|$ goes from $+1$ to $-1$ if a vertex $x$ is moved from $B$ to $D$ during this process by using the "moreover" part of Claim~\ref{cl:4'}. We now first analyze the degree of bad vertices. Let $x$ be a bad vertex in $B$. 

\textbf{Case 1: $|N^-(x)\cap B|\ge \lambda|B|$ or $|N^-(x)\cap C|\ge \lambda|C|$.} If $|N^+(x)\cap B|\ge \lambda|B|$, then we reassign $x$ to $D$ and $|B|-|D|$ will be decreased by 2; otherwise, $|N^+(x)\cap B|\le \lambda|B|$. Since $|A|=O(\sqrt{\mu})n$, we get that $|N^+(x) \cap C| \ge \lambda|C|$ or $|N^+(x) \cap D| \ge \lambda|D|$ by Proposition~\ref{le:delta0}. In this case we can reassign $x$ to $C$ and $|B|-|D|$ will be decreased by $1$.

\textbf{Case 2: $|N^-(x)\cap B|\le \lambda|B|$ and $|N^-(x)\cap C|\le \lambda|C|$.} Since $|A|=O(\sqrt{\mu})n$, we must have $|N^-(x) \cap D| \ge \lambda|D|$ by Proposition~\ref{le:delta0}.
Moreover, $|N^+(x) \cap B| > \lambda|B|$
since $x$ is a bad vertex in $B$.
Now we can reassign $x$ to $A$ and $|B|-|D|$ will be decreased by $1$. 
Note that all vertices after the operations above are still acceptable, and we have decreased $|B| - |D|$ by $1$ or $2$ for each reassignment.
Since the number of bad vertices in $B$ is at least $|B|-|D|$, during the process we must have either $|B|$ and $|D|$ become equal or $|B|- |D|$ goes from $+1$ to $-1$ if a vertex $x$ is moved from $B$ to $D$.
In this case we claim that we can put $x$ into $A$ or $C$ to achieve the following property: every vertex except $x$ is acceptable, there is at least one acceptable edge going into $x$ and at least one acceptable edge coming out of $x$.
Therefore we can apply Claim~\ref{cl:4'} to find a Hamilton cycle in $G$, which would be a contradiction. 

It remains to show that, when $|B|-|D|$ goes from $+1$ to $-1$ if we move $x$ from $B$ to $D$, we can put $x$ into $A$ or $C$ to achieve the `moreover' part of Claim~\ref{cl:4'}. 
To see this, note that $x$ is acceptable even though $x$ is a bad vertex of $B$, this implies that $N^-(x)\cap D\neq\emptyset$ by the definition of acceptable vertices. 
If $N^+(x) \cap B\neq\emptyset$ we can put $x$ into $A$; 
otherwise $N^+(x)\cap B=\emptyset$.
Together with $|A|=O(\sqrt{\mu})n$, we have $N^+(x) \cap C\neq\emptyset$ or $N^+(x) \cap D\neq\emptyset$ by Proposition~\ref{le:delta0}.
Furthermore, since $x$ is a bad vertex in $B$, we have $N^-(x)\cap B\neq \emptyset$ or $N^-(x)\cap C\neq \emptyset$.
Then we can put $x$ into $C$, as required.

Now we are ready to prove that we can arrange at most $O(\sqrt{\mu})n$ vertices such that every vertex is good.
Note that all vertices in $A\cup D$ are good, we know that there are at most $O(\sqrt{\mu})n$ bad vertices in $G$. 
While $|B|>|D|$ we reassign all bad vertices as follows. 
Suppose there is a bad vertex $x\in B\cup C$. 

\textbf{Case 1: $x\in C$.} Since $x$ is bad in $C$, we have $|N_B^+(x)|\ge \lambda|B|$. Also, $x$ is acceptable, it follows that $|N_B^-(x)|\ge \lambda|B|$ or $|N_C^-(x)|\ge \lambda|C|$. So we can reassign $x$ to $D$.

\textbf{Case 2: $x\in B$ with $|N^-(x)\cap B|\ge \lambda|B|$ or $|N^-(x)\cap C|\ge \lambda|C|$.} If $|N^+(x)\cap B|\ge \lambda|B|$, then we reassign $x$ to $D$; otherwise, $|N^+(x)\cap B|\le \lambda|B|$. Since $|A|=O(\sqrt{\mu})n$, we get that $|N^+(x) \cap C| \ge \lambda|C|$ or $|N^+(x) \cap D| \ge \lambda|D|$ by Proposition~\ref{le:delta0}. In this case we can reassign $x$ to $C$.

\textbf{Case 3: $x\in B$ with $|N^-(x)\cap B|\le \lambda|B|$ and $|N^-(x)\cap C|\le \lambda|C|$.} Since $|A|=O(\sqrt{\mu})n$, we must have $|N^-(x) \cap D| \ge \lambda|D|$ by Proposition~\ref{le:delta0}.
Moreover, $|N^+(x) \cap B| > \lambda|B|$
since $x$ is a bad vertex in $B$. It is clear that $x$ is good after reassigning $x$ to $A$. 

We finish the proof of this claim by reassigning all bad vertices at most $O(\sqrt{\mu})n$ times to make all vertices good.
\end{proof}

\end{appendix}

\end{document}